\documentclass[11pt,a4paper]{article}
\usepackage{amssymb,amsmath,mathrsfs,enumerate,comment}
\allowdisplaybreaks[1]
\numberwithin{equation}{section}

\usepackage[colorlinks=true, pdfstartview=FitV, linkcolor=blue, citecolor=blue, urlcolor=blue,pagebackref=false]{hyperref}

\usepackage{tikz}
\usepackage{float}
\usepackage[font=footnotesize]{caption}

\usepackage{color} 
\parskip 2pt

\setlength{\topmargin}{-0.50cm}
\setlength{\oddsidemargin}{1.05cm}
\textwidth=140mm
\textheight=230mm

\usepackage{times,theorem,latexsym,color,comment}

\newcommand{\BOX}{\ensuremath\Box}

\newtheorem{theorem}{Theorem }[section]

\newtheorem{corollary}[theorem]{Corollary}

{\theorembodyfont{\rmfamily}}
{\theorembodyfont{\rmfamily}}
{\theorembodyfont{\rmfamily}}
\newtheorem{lemma}[theorem]{Lemma}
\newtheorem{proposition}[theorem]{Proposition}
{\theorembodyfont{\rmfamily}\newtheorem{remark}[theorem]{Remark}}
{\theorembodyfont{\rmfamily}}

\newcommand{\Z}{\mathbb{Z}}
\newcommand{\R}{\mathbb{R}}
\newcommand{\C}{\mathbb{C}}
\newcommand{\dd}{\,{\rm d}}
\newcommand{\opind}{\operatorname{ind}}
\newcommand{\opdim}{\operatorname{dim}}
\newcommand{\opdiv}{\operatorname{div}}
\newcommand{\oprot}{\operatorname{rot}}
\newcommand{\opsgn}{\operatorname{sgn}}
\newcommand{\oparg}{\operatorname{arg}}
\newcommand{\opLog}{\operatorname{Log}}
\newcommand{\ep}{\epsilon}

\def\XXint#1#2#3{{\setbox0=\hbox{$#1{#2#3}{\int}$}
		\vcenter{\hbox{$#2#3$}}\kern-.5\wd0}}

\DeclareMathOperator*{\esssup}{ess\,sup}

\newenvironment{proof}{{\vskip\baselineskip\noindent\textbf{Proof:}}}%
{\hspace*{.1pt}\hspace*{\fill}\BOX\vskip\baselineskip}

\newenvironment{proofx}[1]%
{\vskip\baselineskip\noindent\textbf{Proof of {#1}:}}%
{\hspace*{.1pt}\hspace*{\fill}\BOX\vskip\baselineskip}
{\vskip\baselineskip\noindent\textbf{Proof of Theorem \protect\ref{#1}:}}%
{\hspace*{.1pt}\hspace*{\fill}\BOX\vskip\baselineskip}
{\vskip\baselineskip\noindent\textbf{Proof of Theorems \protect\ref{#1} --
		\protect\ref{#2}:}}%
{\hspace*{.1pt}\hspace*{\fill}\BOX\vskip\baselineskip}

%%%%%%%%%%%%%%%%%%%%%%%%%%%%%%%%%%%%%%%%
%%%%%%%%%%%%%%%%%%%%%%%%%%%%%%%%%%%%%%%%
\begin{document}

\title{Stability of planar exterior stationary flows with suction}

\author{Mitsuo Higaki \\
Department of Mathematics, 
Graduate School of Science, 
Kobe University, \\
1-1 Rokkodai, Nada-ku, Kobe 657-8501, Japan \\
E-mail: higaki@math.kobe-u.ac.jp}
\date{}

\maketitle

\noindent {\bf Abstract.}\ 
We consider the two-dimensional Navier-Stokes system in a domain exterior to a disk. The system admits a stationary solution with critical decay $O(|x|^{-1})$ written as a linear combination of the pure rotating flow and the flux carrier. We prove its nonlinear stability in large time for initial disturbances in $L^2$ under smallness conditions, assuming that there is suction across the boundary, namely that the sign of coefficients of the flux carrier is negative. This result partially solves an open problem in the literature.

\medskip

\noindent{\bf Keywords.}\ 
Navier-Stokes system, 
Two-dimensional exterior domains, 
Stability of stationary solutions, 
Scale-critical decay.

\medskip

\noindent{\bf 2020 MSC.}\ 
35Q30, 35B35, 76D05, 76D17.

\tableofcontents

%%%%%%%%%%%%%%%%%%%%%%%%%%%%%%%%%%%%%%%%
%%%%%%%%%%%%%%%%%%%%%%%%%%%%%%%%%%%%%%%%
\section{Introduction}\label{sec.intro.}
%%%%%%%%%%%%%%%%%%%%%%%%%%%%%%%%%%%%%%%%
%%%%%%%%%%%%%%%%%%%%%%%%%%%%%%%%%%%%%%%%

We consider the two-dimensional Navier-Stokes system in an exterior disk  
\begin{equation}\tag{NS}\label{eq.NS}
\left\{
\begin{array}{ll}
\partial_t u - \Delta u + \nabla p 
= -u\cdot\nabla u&\mbox{in}\ (0,\infty)\times\Omega \\
\opdiv u =0&\mbox{in}\ [0,\infty)\times\Omega \\
u(x)=\alpha x^{\bot} - \delta x&\mbox{on}\ (0,\infty)\times\partial\Omega \\
u|_{t=0}=u_0&\mbox{in}\ \Omega. 
\end{array}\right.
\end{equation}
The unknown functions $u=(u_1(t,x),u_2(t,x))$ and $p=p(t,x)$ are respectively the velocity field of the fluid and the pressure field. The function $u_0=(u_{0,1}(x), u_{0,2}(x))$ is a given initial data. The set $\Omega$ denotes the exterior unit disk $\{x=(x_1,x_2)\in\R^2~|~|x|>1\}$ where $|x| = \sqrt{x_1^2+x_2^2}$. We assume that both $\alpha$ and $\delta$ are real number constants. The vector $x^\bot$ refers to $(-x_2,x_1)$. The system \eqref{eq.NS} describes the time evolution of viscous incompressible fluids around the disk rotating at angular velocity $\alpha$ on whose surface there is suction in the orthogonal direction when $\delta>0$ and injection when $\delta<0$.

The system \eqref{eq.NS} admits an explicit stationary solution $(\alpha U -\delta W, \nabla P_{\alpha,\delta})$ where   
\begin{align}\label{def.U.W}
\begin{split}
U(x) 
= \frac{x^\bot}{|x|^2}, 
\qquad
W(x) 
= \frac{x}{|x|^2}, 
\end{split}
\end{align}
and 
\begin{align}\label{def.P}
\begin{split}
\nabla P_{\alpha,\delta}(x) = -\nabla\Big(\frac{|\alpha U(x) - \delta W(x)|^2}{2}\Big).
\end{split}
\end{align}
This velocity is a linear combination of the vector field $U$ denoting the pure rotating flow in $\Omega$ and $W$ the flux carrier. To lighten notation, in the following, we write  
\begin{align}\label{def.V}
V = V(\alpha,\delta) = \alpha U - \delta W. 
\end{align}
The solution $V$ is invariant under the scaling of the Navier-Stokes equations. A (non-trivial) solution having this property is said to be scale-critical and it represents the balance between the nonlinear and linear parts of the equations. Therefore, investigating the properties of the scale-critical solutions is a fundamental and important issue in understanding the typical behavior of the Navier-Stokes flows. Let us mention that $V$ is an element of the family of stationary solutions of \eqref{eq.NS} found by Hamel \cite{Hamel(1917)}. This family is known to be an example showing the non-uniqueness of the $D$-solutions; see Galdi \cite[Section X\hspace{-.1em}I\hspace{-.1em}I.2]{Galdi(2011)}. The Hamel solutions are generalized by Guillod and Wittwer \cite{Guillod-Wittwer(2015)} in view of rotation symmetries.

In this paper, we study the nonlinear stability of $V$ in large time. More precisely, assuming that an initial disturbance around $V$ belongs to the Lebesgue spaces, we consider the time evolution of the disturbance in the nonlinear system \eqref{eq.NS}. Particularly, we are interested in the large-time decay estimate. By using the relation 
\begin{align}\label{eq.bilinear.rot}
u\cdot\nabla v + v\cdot\nabla u 
= u^{\bot} {\rm rot}\,v + v^{\bot} {\rm rot}\,u 
+ \nabla\Big(\frac{|u+v|^2-|u|^2-|v|^2}2\Big) 
\end{align}
and $\oprot V=0$, we see that the pair of new unknown functions 
$$
v = u - V 
\qquad
\text{and}
\qquad 
\nabla q = \nabla\Big(p + \frac{|u|^2}2\Big) 
$$
formally solves the nonlinear problem 
\begin{equation}\tag{NP}\label{eq.nonlin.prob}
\left\{
\begin{array}{ll}
\partial_t v - \Delta v + V^\bot \oprot v + \nabla q 
= -v^{\bot} \oprot v&\mbox{in}\ (0,\infty)\times\Omega \\
\opdiv v=0&\mbox{in}\ [0,\infty)\times\Omega \\
v=0&\mbox{on}\ (0,\infty)\times\partial\Omega \\
v|_{t=0}=v_0 := u|_{t=0} - V&\mbox{in}\ \Omega. 
\end{array}\right.
\end{equation}
The linearized problem of \eqref{eq.nonlin.prob} is given by 
\begin{equation}\tag{LP}\label{eq.lin.prob}
\left\{
\begin{array}{ll}
\partial_t v - \Delta v + V^\bot \oprot v + \nabla q 
=0&\mbox{in}\ (0,\infty)\times\Omega \\
\opdiv v=0&\mbox{in}\ [0,\infty)\times\Omega \\
v=0&\mbox{on}\ (0,\infty)\times\partial\Omega \\
v|_{t=0}=v_0&\mbox{in}\ \Omega. 
\end{array}\right.
\end{equation}
Our main aim in this paper is to obtain large-time decay estimates of the solutions of \eqref{eq.lin.prob}, by studying the operators associated with \eqref{eq.lin.prob}. We will provide the $L^p$-$L^q$ estimates sufficient to prove the nonlinear stability of the stationary solution $V$ in large time.

In order to make the framework clearer, we recall some notations and basic facts about the linear system \eqref{eq.lin.prob}. We let $C^\infty_{0,\sigma}(\Omega)$ denote $\{\varphi\in C^\infty_0(\Omega)^2~|~\opdiv\varphi=0\}$, $L^2_\sigma(\Omega)$ the closure of $C^\infty_{0,\sigma}(\Omega)$ in $L^2(\Omega)^2$, and ${\mathbb P}: L^2(\Omega)^2 \to L^2_\sigma(\Omega)$ the orthogonal projection. The operator ${\mathbb P}$ is called the Helmholtz projection and satisfies ${\mathbb P} \nabla p=0$ for $p\in L^2_{{\rm loc}}(\overline{\Omega})$ with $\nabla p\in L^2(\Omega)^2$. The operator, called the Stokes operator, is defined by 
$$
{\mathbb A} = -{\mathbb P} \Delta,
\qquad
D({\mathbb A}) = L^{2}_{\sigma}(\Omega) \cap W^{1,2}_0(\Omega)^2 \cap W^{2,2}(\Omega)^2. 
$$
It is well known that ${\mathbb A}$ is nonnegative and self-adjoint in $L^{2}_{\sigma}(\Omega)$ and that $-{\mathbb A}$ generates the $C_0$-analytic semigroup; see Sohr \cite{Sohr(2013)}. Moreover, the spectrum of $-{\mathbb A}$ is the set of nonpositive real numbers $\sigma(-{\mathbb A})=\R_{\le0}=\{x\in\R~|~x\le0\}$; see Section \ref {sec.spec.anal.} for the references. With these notations, we define the operator associated with \eqref{eq.lin.prob} by 
$$
{\mathbb A}_V v 
= {\mathbb A} v 
+ {\mathbb P} V^\bot \oprot v, 
\qquad 
D({\mathbb A}_V) = D({\mathbb A}), 
$$
and write \eqref{eq.lin.prob} equivalently with the evolution system 
\begin{equation}\label{intro.evel.sys}
\frac{\dd v}{\dd t} + {\mathbb A}_V v 
= 0 
\mkern9mu 
\mbox{in}\ (0,\infty), 
\qquad 
v|_{t=0} = v_0. 
\end{equation}
We aim at proving the properties of solutions of \eqref{intro.evel.sys} by studying the operator $-{\mathbb A}_V$.

One basic way to study the properties of $-{\mathbb A}_V$ is to consider the equation 
\begin{equation}\tag{R}\label{intro.eq.resol}
(\lambda + {\mathbb A}_V)v = f
\end{equation}
for given $\lambda\in\C$ and $f\in L^2_{\sigma}(\Omega)$. This equation can be obtained by formal application of the Laplace transform to \eqref{intro.evel.sys}. From the general theory of functional analysis, we find the following two facts. First, as the operator ${\mathbb P} V^\bot \oprot$ is lower order with respect to ${\mathbb A}$, from theory for sectorial operators, we see that $-{\mathbb A}_V$ is also sectorial in $L^2_{\sigma}(\Omega)$ and generates the $C_0$-analytic semigroup, denoted by $\{e^{-t {\mathbb A}_V}\}_{t\ge0}$; see Lunardi \cite[Proposition 2.4.3]{Lunardi(1995)}. Second, as ${\mathbb P} V^\bot \oprot$ is relatively compact with respect to ${\mathbb A}$, from the perturbation theory of operators, we see that $\sigma(-{\mathbb A}_V)=\R_{\le0}\cup\sigma_{{\rm disc}}(-{\mathbb A}_V)$ where $\sigma_{{\rm disc}}(-{\mathbb A}_V)$ denotes the discrete spectrum of $-{\mathbb A}_V$; see Section \ref {sec.spec.anal.} for details. These two facts, however, are not sufficient to obtain the large-time estimate of $\{e^{-t {\mathbb A}_V}\}_{t\ge0}$ since $\sigma(-{\mathbb A}_V)$ contains $\R_{\le 0}$. We need a precise estimate of the resolvent $(\lambda + {\mathbb A}_V)^{-1}$ when $\lambda$ is close to the origin.

The fundamental difficulty in analyzing \eqref{intro.eq.resol} when $|\lambda|\ll1$ is that the Hardy inequality 
\begin{align}\label{ineq.hardy}
\Big\| 
x
\mapsto
\frac{f(x)}{|x|} 
\Big\|_{L^2} 
\le C \|\nabla f\|_{L^2}, 
\quad 
f \in 
\dot{W}^{1,2}_0 (\Omega)^d
= \Big(\overline{C^\infty_0 (\Omega)}^{\|\nabla\, \cdot\,\|_{L^2}}\Big)^d 
\end{align}
does not hold in exterior domains $\Omega\subset \R^d$ when $d=2$. If \eqref{ineq.hardy} holds when $d=2$, the term ${\mathbb P} V^\bot \oprot v$ in \eqref{intro.eq.resol} can be controlled by the dissipation from $-\Delta v$ if $|\alpha|+|\delta|$ is small. Nevertheless, one needs a logarithmic correction in the left-hand side of \eqref{ineq.hardy} to obtain the correspondence; see \cite[Theorem II.6.1]{Galdi(2011)}. This implies that energy method does not work well in general in deriving estimates for \eqref{intro.eq.resol} when $|\lambda|\ll1$. One way to recover inequalities of the type \eqref{ineq.hardy} when $d=2$ is to assume symmetries both on $\Omega$ and $f$; see Galdi and Yamazaki \cite{Galdi-Yamazaki(2015)}, Yamazaki \cite{Yamazaki(2016)}, and Guillod \cite{Guillod(2017)} for the stability results of symmetric flows under symmetries. As we do not assume any symmetries on initial data in \eqref{eq.nonlin.prob}, unlike \cite{Galdi-Yamazaki(2015), Yamazaki(2016), Guillod(2017)}, such inequalities are not applicable to \eqref{eq.lin.prob} nor \eqref{intro.eq.resol}. This is in stark contrast to the three-dimensional stability results by Heywood \cite{Heywood(1970)} and by Borchers and Miyakawa \cite{Borchers-Miyakawa(1992), Borchers-Miyakawa(1995)} in which the Hardy inequality \eqref{ineq.hardy} with $d=3$ is an essential tool. As a recent monograph of the three-dimensional results, we refer to Brandolese and Schonbek \cite{Brandolese-Schonbek(2018)}.

Therefore, even for the flow $V=V(\alpha,\delta)$ explicitly given in \eqref{def.V}, the stability analysis in two-dimensional exterior domains requires specific considerations depending on the parameters $\alpha$ and $\delta$. The known results are summarized as follows.

\begin{itemize}
\item The case $\alpha=0$ and $\delta\neq0$ is treated in Guillod \cite{Guillod(2017)}. This case is tractable and similar to the three-dimensional cases if $|\delta|$ is sufficiently small. In fact, for general exterior domains $\Omega\subset \R^2$, Russo \cite[Lemma 3]{Russo(2011)} proves the Hardy-type inequality 
\begin{align}\label{Hardytype.ineq.W}
|\langle u\cdot\nabla u, W \rangle|
\le 
C \|\nabla u\|_{L^2}^2, 
\quad 
u \in 
\dot{W}^{1,2}_{0,\sigma} (\Omega)
= \overline{C^\infty_{0,\sigma} (\Omega)}^{\|\nabla\, \cdot\,\|_{L^2}}. 
\end{align}
The reader is referred to \cite[Remark X.4.2]{Galdi(2011)} and \cite[Lemma 3]{Guillod(2017)} for further discussions. Combining \eqref{Hardytype.ineq.W} with the relation \eqref{eq.bilinear.rot}, we obtain the control 
\begin{align}\label{Hardytype.ineq.W.2}
|\langle {\mathbb P} (\delta W)^\bot \oprot v, v\rangle|
\le 
C |\delta| 
\|\nabla v\|_{L^2}^2, 
\quad 
v\in D({\mathbb A}_V). 
\end{align}
This observation implies that, by a simple energy estimate applied to \eqref{intro.eq.resol}, we can obtain the $L^p$-$L^q$ estimates for the system \eqref{eq.lin.prob} and prove the nonlinear stability of $V=\delta W$. Alternatively, as is done in \cite{Guillod(2017)}, one can prove the stability by considering $L^2$-estimates of the semigroup generated by the adjoint of the operator $-{\mathbb A}_{\delta W}$. A similar idea is also used in Karch and Pilarczyk \cite{Karch-Pilarczyk(2011)}.

\item The case $\alpha\neq0$ and $\delta=0$ is treated in Maekawa \cite{Maekawa(2017a)}. In this case, energy method is not useful for \eqref{intro.eq.resol}. Indeed, \cite[Lemma 4]{Guillod(2017)} points out that the Hardy-type inequality \eqref{Hardytype.ineq.W} does not hold if $W$ is replaced by $U$. To relax the situation, \cite{Maekawa(2017a)} considers the problem in an exterior disk and performs explicit computations. The $L^p$-$L^q$ estimates for \eqref{eq.lin.prob} are obtained when $|\alpha|$ is sufficiently small by an explicit formula for the resolvent $(\lambda + {\mathbb A}_{\alpha U})^{-1}$. Also, the nonlinear stability of $\alpha U$ is proved when both $|\alpha|$ and the $L^2$-norm of initial data in \eqref{eq.nonlin.prob} are sufficiently small. This stability result is extended by the author in \cite{Higaki(2019)} to a certain class of non-symmetric domains where the domains are assumed to be small perturbations of the exterior unit disk, and in \cite{Higaki(2023)} for three-dimensional initial disturbances around an infinite cylinder.

\item The case $\alpha\neq0$ and $\delta\neq0$ is treated in Maekawa \cite{Maekawa(2017b)}. The problem is considered on an exterior disk as in \cite{Maekawa(2017a)}. The idea of the proof is to regard the term ${\mathbb P} (\delta W)^\bot \oprot v$ in \eqref{intro.eq.resol} as an external force and to utilize the estimate of $(\lambda + {\mathbb A}_{\alpha U})^{-1}$ in \cite{Maekawa(2017a)}. The $L^p$-$L^q$ estimates for \eqref{eq.lin.prob} are obtained when $|\alpha|+|\delta|$ is sufficiently small, under the restriction that initial data belong to a subcritical space $L^2\cap L^q$ for some $1<q<2$. Also, the nonlinear stability of $V$ is proved when both $|\alpha|+|\delta|$ and the ($L^2\cap L^q$)-norm of initial data in \eqref{eq.nonlin.prob} are sufficiently small. This restriction on exponents is essentially needed when estimating $(\lambda + {\mathbb A}_{\alpha U})^{-1} {\mathbb P} (\delta W)^\bot \oprot v$. As is mentioned in \cite[Remark 2]{Maekawa(2017b)}, it is not clear if the condition $q<2$ can be removed in this method. 
\end{itemize}
%

%%%%%%%%%%%%%%%%%%%%
%%%%%%%%%%%%%%%%%%%%
\subsection{Main results}
%%%%%%%%%%%%%%%%%%%%
%%%%%%%%%%%%%%%%%%%%

This paper addresses large-time estimates for the system \eqref{eq.lin.prob}, namely, of the semigroup $\{e^{-t {\mathbb A}_V}\}_{t\ge0}$, when $\alpha\neq0$ and $\delta\neq0$ as in \cite{Maekawa(2017b)}. Our particular interest is the $L^p$-$L^2$ estimates left as open problems in \cite{Maekawa(2017b)}. The following theorem solves it affirmatively under a condition on the sign of $\delta$. This condition is discussed in Remark \ref{rem.thm.L2L2} (\ref{item3.rem.thm.L2L2}) below. 
%
%%%%%%%%%%
\begin{theorem}\label{thm.L2L2}
Let $\alpha,\delta\in\R$ satisfy $\alpha\neq0$ and $\delta\ge0$ and let $|\alpha|+\delta$ be sufficiently small. For $f\in L^2_\sigma(\Omega)$, we have 
\begin{align}\label{est.thm.L2L2}
\begin{split}
\|e^{-t {\mathbb A}_V} f\|_{L^2} 
& \le 
C \|f\|_{L^2}, 
\quad 
t>0, \\
\|\nabla e^{-t {\mathbb A}_V} f\|_{L^2} 
& \le 
C t^{-\frac12}
\|f\|_{L^2},
\quad 
t>0. 
\end{split}
\end{align}
The constant $C$ depends on $\alpha,\delta,p$. 
\end{theorem}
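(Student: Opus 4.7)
The plan is to deduce the $L^{2}$--$L^{2}$ estimates in \eqref{est.thm.L2L2} from a uniform resolvent bound of the form $\|(\lambda+\mathbb{A}_{V})^{-1}f\|_{L^{2}}\le C(1+|\lambda|)^{-1}\|f\|_{L^{2}}$ on some sector $\{\lambda\in\mathbb{C}\setminus\{0\}\,:\,|\arg\lambda|<\pi/2+\theta\}$, combined with the already-known analyticity of $\{e^{-t\mathbb{A}_{V}}\}_{t\ge 0}$. The $L^{2}$-boundedness of the semigroup then follows from the Dunford integral representation along a suitable contour, and the gradient estimate with weight $t^{-1/2}$ from standard analytic semigroup theory, via the equivalence of $\|\mathbb{A}_{V}^{1/2}\cdot\|_{L^{2}}$ with $\|\nabla\cdot\|_{L^{2}}$ on $D(\mathbb{A}_{V}^{1/2})$.

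The natural splitting is $\mathbb{A}_{V}=\mathbb{A}_{\alpha U}+\delta\,\mathbb{P}(-W)^{\bot}\oprot$, under which the resolvent equation becomes $(\lambda+\mathbb{A}_{\alpha U})v=f+\delta\,\mathbb{P}W^{\bot}\oprot v$ for $v=(\lambda+\mathbb{A}_{V})^{-1}f$. Inverting with the explicit $L^{2}$--$L^{2}$ resolvent estimates of \cite{Maekawa(2017a)} requires controlling $\mathbb{P}W^{\bot}\oprot v$ in $L^{2}_{\sigma}(\Omega)$; the obstruction noted in \cite{Maekawa(2017b)} is that the $O(|x|^{-1})$ decay of $W$ makes the composition $(\lambda+\mathbb{A}_{\alpha U})^{-1}\mathbb{P}W^{\bot}\oprot$ unbounded on $L^{2}$, which is what forced \cite{Maekawa(2017b)} to work with $L^{2}\cap L^{q}$ data for some $q<2$.

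To bypass this within the pure $L^{2}$ framework under $\delta\ge 0$, I would complement Maekawa's estimate with a direct energy test of the resolvent equation against $\bar v$. Using $\opdiv V=\oprot V=0$, $v|_{\partial\Omega}=0$, and the bilinear identity \eqref{eq.bilinear.rot}, the cross term reduces to $\langle v\cdot\nabla V,v\rangle$. Its $\delta W$-contribution is controlled by $C\delta\|\nabla v\|_{L^{2}}^{2}$ via Russo's inequality \eqref{Hardytype.ineq.W} together with the identity $\langle v\cdot\nabla W,v\rangle=-\langle v\cdot\nabla v,W\rangle$, and is absorbed by the Dirichlet energy when $\delta$ is small. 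The $\alpha U$-contribution is not controllable this way, but an angular Fourier decomposition on the exterior disk isolates a zero mode on which divergence-free Dirichlet fields are purely azimuthal and $\langle v\cdot\nabla U,v\rangle$ vanishes identically, while on the nonzero modes one can invoke the explicit Fourier-analytic resolvent formulas for $\mathbb{A}_{\alpha U}$ of \cite{Maekawa(2017a)}. The suction assumption $\delta\ge 0$, equivalent to $V\cdot n\le 0$ on $\partial\Omega$, is expected to enter when controlling the intermode coupling produced by the $-\delta W$-perturbation at low frequencies: the sign ensures that the relevant boundary and transport contributions have the right (dissipative) sign.

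The principal obstacle I anticipate is the uniformity of the resolvent estimate as $|\lambda|\to 0$, where the two-dimensional failure of the Hardy inequality forbids closing the bound by energy alone. The heart of the argument is to combine the quantitative mode-by-mode control of $\mathbb{A}_{\alpha U}$ with the Russo-controlled $O(\delta)$ perturbation so as to yield a bound uniform in $\lambda$ near the origin, without resorting to any subcritical norm on $f$. Establishing this uniformity, and in particular showing that the intermode coupling produced by $\mathbb{P}W^{\bot}\oprot$ remains a genuine small correction at low frequencies under the sign condition $\delta\ge 0$, is the step that I expect to require the most careful spectral analysis.
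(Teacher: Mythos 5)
There is a genuine gap: the step you yourself flag as the ``principal obstacle'' --- a resolvent bound uniform as $|\lambda|\to0$ in the pure $L^2$ setting --- is the entire content of the paper, and the route you propose for it cannot close. Your plan is to split $\mathbb{A}_V=\mathbb{A}_{\alpha U}-\delta\,\mathbb{P}W^\bot\oprot$ and treat the $\delta$-term perturbatively, using the explicit resolvent of $\mathbb{A}_{\alpha U}$ from \cite{Maekawa(2017a)} plus Russo's inequality \eqref{Hardytype.ineq.W} in an energy test. But this is precisely the scheme of \cite{Maekawa(2017b)}, and it is what forces data in $L^2\cap L^q$, $q<2$: the composition $(\lambda+\mathbb{A}_{\alpha U})^{-1}\mathbb{P}W^\bot\oprot$ is not small on $L^2$ uniformly down to $\lambda=0$, and the paper (Remark \ref{rem.thm.L2L2} (\ref{item2.rem.thm.L2L2})) shows via the spectral analysis that ${\mathbb P}(\delta W)^\bot\oprot v$ cannot be regarded as an external force for $|\lambda|\ll1$ even for small $\delta$, despite the control \eqref{Hardytype.ineq.W.2}. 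The paper instead keeps $\delta$ inside the explicit analysis: on each angular mode the vorticity solves \eqref{eq.polar.vor}, whose decaying solution is $r^{-\delta/2}K_{\xi_n}(\sqrt{\lambda}r)$ with the $\delta$-dependent order $\xi_n$ of \eqref{def.xi}, and the discrete spectrum is characterized by zeros of $F_n(\sqrt\lambda;\alpha,\delta)$ in \eqref{def.Fn}; the hard work (Section \ref{sec.quant.anal.disc.spec}) is a quantitative lower bound on $F_n$, $|n|=1$, near $z=0$, followed by explicit resolvent formulas for the full $\mathbb{A}_{V,n}$ (Section \ref{sec.resol.est}). Nothing in your proposal substitutes for this low-frequency analysis on the $|n|=1$ modes, where the Hardy inequality fails and your energy test does not close.

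Two of your structural guesses are also off. First, there is no ``intermode coupling produced by $\mathbb{P}W^\bot\oprot$'': since $V$ is radial in modulus, $\mathcal{P}_n\mathbb{P}V^\bot\oprot v=\mathbb{P}V^\bot\oprot v_n$, so the perturbation is mode-diagonal and the sign condition cannot enter through such coupling; moreover the delicate modes are $|n|=1$, not the zero mode (for $|n|\neq1$ the Hardy-type inequality \eqref{Hardy.ineq.nonzero} makes the energy method work, cf.\ Proposition \ref{prop.spec.small}). Second, $\delta\ge0$ does not act through a dissipative boundary/transport term in an energy identity; in the paper it enters in the Gamma-function asymptotics (Lemma \ref{lem.Gamma.Delta.asymptot.}, where $(\delta/2)^2+|\eta_n|^2\le|\delta/2+\eta_n|^2$ requires $\delta\,\Re\eta_n\ge0$) and in the enlarged lower bound $K(\delta/2+\eta_n)\gtrsim\min\{|\alpha|,\delta+\alpha^2\}$ of Remark \ref{rem.prop.est.Fn}, i.e.\ in the location of possible zeros of $F_n$, which is invisible to the perturbative/energy framework you describe. (A minor additional point: your gradient estimate relies on the equivalence $\|\mathbb{A}_V^{1/2}\cdot\|_{L^2}\simeq\|\nabla\cdot\|_{L^2}$ for the non-self-adjoint $\mathbb{A}_V$, which is not justified; the paper avoids this by proving the gradient resolvent bound \eqref{est2.prop.resol.LpLq} directly and integrating it along the Dunford contour.)
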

%%%%%%%%%%
%
%
%%%%%%%%%%
\begin{remark}\label{rem.thm.L2L2}
\begin{enumerate}[(i)]
\item\label{item1.rem.thm.L2L2}
By combining Theorem \ref{thm.L2L2} with the $L^p$-$L^q$ estimates in \cite{Maekawa(2017b)} and by applying the Gagliardo-Nirenberg inequality, we obtain 
\begin{align}\label{LpLq.rem.thm.L2L2}
\begin{split}
\|e^{-t {\mathbb A}_V} f\|_{L^p} 
& \le 
C t^{-\frac1q+\frac1p}
\|f\|_{L^q}, 
\quad 
t>0, \\
\|\nabla e^{-t {\mathbb A}_V} f\|_{L^2} 
& \le 
C t^{-\frac1q}
\|f\|_{L^q},
\quad 
t>0 
\end{split}
\end{align}
for $1<q\le2\le p<\infty$ and $f\in L^2_\sigma(\Omega) \cap L^q(\Omega)^2$ with a constant $C=C(\alpha,\delta,q,p)$.

\item\label{item2.rem.thm.L2L2}
The proof of Theorem \ref{thm.L2L2} is based on an analysis of the operator $-{\mathbb A}_V$. The estimate \eqref{est.thm.L2L2} for $\{e^{-t {\mathbb A}_V}\}_{t\ge0}$ is deduced by the Dunford integral of the resolvent $(\lambda + {\mathbb A}_V)^{-1}$. Inspired by \cite{Maekawa(2017a)}, we determine the spectrum of $-{\mathbb A}_V$ and estimate $(\lambda + {\mathbb A}_V)^{-1}$ by explicit computations. It is shown in Section \ref{sec.spec.anal.} that the function characterizing the discrete spectrum of $-{\mathbb A}_V$ crucially depends on both $\alpha$ and $\delta$. Therefore, it is suggested that, when estimating $(\lambda + {\mathbb A}_V)^{-1}$ for $|\lambda|\ll1$, one cannot regard ${\mathbb P} (\delta W)^\bot \oprot v$ in \eqref{intro.eq.resol} as an external force even if $|\delta|$ is small, in spite of the control \eqref{Hardytype.ineq.W.2}.

\item\label{item3.rem.thm.L2L2}
It is an open problem whether the same estimate as in \eqref{est.thm.L2L2} can be obtained for the case $\delta<0$. Actually, by following the argument in Section \ref{sec.quant.anal.disc.spec}, one can prove \eqref{est.thm.L2L2} if $\delta$ is chosen to depend on a given $\alpha$, but the general case is still open. It might be meaningful to recall here that the case $\delta<0$ corresponds to the situation where there is injection into fluids at the boundary. We mention Drazin and Reid \cite[Problem 3.7]{Drazin-Reid(2004)} and Drazin and Riley \cite[Section 3.1]{Drazin-Riley(2006)} as the references related to this topic.

\item\label{item4.rem.thm.L2L2}
It is important to extend the $L^p$-$L^q$ estimates in \eqref{LpLq.rem.thm.L2L2} to general exterior domains. However, this is a difficult problem because of the dependence of constants on $\alpha,\delta$. The problem when $\delta=0$ is tackled in \cite{Higaki(2019)} and it is shown that, if the domain $\Omega$ is a perturbation from the exterior unit disk in algebraic order of $|\alpha|$, then the $L^p$-$L^q$ estimates can be obtained by energy method combined with explicit formulas. The restriction to a class of domains is due to singularity in the operator norm of the resolvent $(\lambda+{\mathbb A}_{\alpha U})^{-1}$ for small $|\alpha|$. It is observed that, in explicit computations, cancellation of the effects from the two terms $\lambda v$ and $\alpha U^\bot \oprot v$ in \eqref{intro.eq.resol} (with $\delta=0$) occurs for $\lambda$ in a certain domain, dubbed the ``nearly-resonance regime" in \cite{Higaki(2019)}. This cancellation causes the singularity at algebraic order of $|\alpha|$, which in energy method restricts the shape of domains, more precisely the lengths between domains and the exterior unit disk. Such singularity also appears in the operator norm of $(\lambda+{\mathbb A}_{V})^{-1}$ for small $|\alpha|+\delta$ in the present problem and is an obstacle to the extension.
\end{enumerate}
\end{remark}
%%%%%%%%%%
%

By using Theorem \ref{thm.L2L2}, we can prove the nonlinear stability of $V$. Using the semigroup $\{e^{-t {\mathbb A}_V}\}_{t\ge0}$, we consider the mild solutions of \eqref{eq.nonlin.prob} solving 
\begin{align}\label{eq.INS}
v(t) 
= e^{-t {\mathbb A}_V} v_0 
- \int_0^t e^{-(t-s) {\mathbb A}_V} \mathbb{P} (v^{\bot} \oprot v)(s) \dd s, 
\quad
t>0. 
\end{align}
The following theorem can be shown by a simple application of the Banach fixed point theorem and thus is omitted in this paper. For details, see \cite{Maekawa(2017a)} treating the case $\delta=0$.
%
%%%%%%%%%%
\begin{theorem}\label{thm.nonlin.stability}
Let $\alpha,\delta\in\R$ satisfy $\alpha\neq0$ and $\delta\ge0$ and let $|\alpha|+\delta$ be sufficiently small. Let $v_0$ belong to $L^2_{\sigma}(\Omega)$ and let $\|v_0\|_{L^2}$ be sufficiently small depending on $\alpha,\delta$. There is a unique mild solution $v \in C\big([0,\infty); L^2_\sigma(\Omega)\big) \cap C\big((0,\infty);W^{1,2}_0(\Omega)^2\big)$ of \eqref{eq.INS} satisfying
\begin{align}\label{est1.thm.nonlin.stability}
\lim_{t\to\infty} t^{\frac{k}{2}} \|\nabla^k v(t)\|_{L^2} = 0,
\quad 
k = 0,1. 
\end{align}
\end{theorem}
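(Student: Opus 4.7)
The plan is to solve the integral equation \eqref{eq.INS} via Banach's contraction principle in the complete metric space
\begin{equation*}
X_\eta := \Big\{ v \in C\bigl([0,\infty); L^2_\sigma(\Omega)\bigr) \cap C\bigl((0,\infty); W^{1,2}_0(\Omega)^2\bigr) \; : \; \|v\|_X \le \eta \Big\},
\end{equation*}
endowed with the scale-critical norm
\begin{equation*}
\|v\|_X := \sup_{t>0}\|v(t)\|_{L^2} + \sup_{t>0} t^{1/2}\|\nabla v(t)\|_{L^2}.
\end{equation*}
Writing $\Phi(v)(t) := e^{-t\mathbb{A}_V} v_0 - \int_0^t e^{-(t-s)\mathbb{A}_V} \mathbb{P}(v^\bot\oprot v)(s)\dd s$, Theorem \ref{thm.L2L2} gives immediately $\sup_{t>0}\|e^{-t\mathbb{A}_V}v_0\|_{L^2} + \sup_{t>0} t^{1/2}\|\nabla e^{-t\mathbb{A}_V}v_0\|_{L^2} \le C_0\|v_0\|_{L^2}$.

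For the nonlinear part I would first observe from \eqref{eq.bilinear.rot} with $u=v$ that $v^\bot\oprot v$ coincides with $v\cdot\nabla v$ up to a gradient, hence $\mathbb{P}(v^\bot\oprot v) = \mathbb{P}\opdiv(v\otimes v)$ (using $\opdiv v = 0$). The two-dimensional Ladyzhenskaya inequality $\|v\|_{L^4}^2 \le C\|v\|_{L^2}\|\nabla v\|_{L^2}$ together with H\"older yields
\begin{equation*}
\|v(s) \cdot \nabla v(s)\|_{L^{4/3}} \le \|v(s)\|_{L^4}\|\nabla v(s)\|_{L^2} \le C\|v\|_X^2\, s^{-3/4},
\end{equation*}
and the $L^{4/3}$--$L^2$ smoothing estimates in \eqref{LpLq.rem.thm.L2L2} (with $q = 4/3$) give
\begin{align*}
\Bigl\|\int_0^t e^{-(t-s)\mathbb{A}_V}\mathbb{P}(v\cdot\nabla v)(s)\dd s\Bigr\|_{L^2} & \le C\|v\|_X^2 \int_0^t (t-s)^{-1/4} s^{-3/4}\dd s = C B(\tfrac34,\tfrac14)\|v\|_X^2, \\
t^{1/2}\Bigl\|\int_0^t \nabla e^{-(t-s)\mathbb{A}_V} \mathbb{P}(v\cdot\nabla v)(s)\dd s\Bigr\|_{L^2} & \le C\|v\|_X^2\, t^{1/2}\int_0^t (t-s)^{-3/4} s^{-3/4}\dd s = C B(\tfrac14,\tfrac14)\|v\|_X^2.
\end{align*}
Therefore $\|\Phi(v)\|_X \le C_0\|v_0\|_{L^2} + C_1\|v\|_X^2$, and the same bilinear calculation produces the Lipschitz bound $\|\Phi(v_1) - \Phi(v_2)\|_X \le C_1(\|v_1\|_X + \|v_2\|_X)\|v_1 - v_2\|_X$. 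Choosing $\eta := 2C_0\|v_0\|_{L^2}$ and imposing $\|v_0\|_{L^2} < (4C_0 C_1)^{-1}$ makes $\Phi$ a strict contraction on $X_\eta$, whose unique fixed point is the desired mild solution. Strong continuity at $t=0$ in $L^2_\sigma(\Omega)$ follows from the $C_0$-semigroup property of $\{e^{-t\mathbb{A}_V}\}_{t\ge0}$ together with a direct check that the Duhamel integral vanishes in $L^2$ as $t\to0^+$.

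The main remaining obstacle is the vanishing \eqref{est1.thm.nonlin.stability}: the bounds above are uniform in $t$, not decaying. For the linear piece I would proceed by density: approximate $v_0 \in L^2_\sigma(\Omega)$ by $v_0^n \in L^2_\sigma(\Omega) \cap L^{4/3}(\Omega)^2$; then \eqref{LpLq.rem.thm.L2L2} gives $t^{k/2}\|\nabla^k e^{-t\mathbb{A}_V} v_0^n\|_{L^2} \le C t^{-1/4}\|v_0^n\|_{L^{4/3}} \to 0$ as $t\to\infty$, while Theorem \ref{thm.L2L2} controls the residual uniformly by $\|v_0 - v_0^n\|_{L^2}$. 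For the bilinear contribution, a dyadic split $\int_0^t = \int_0^{t/2} + \int_{t/2}^t$ extracts decay in $t$ (the kernel $(t-s)^{-a}$ yields $\sim t^{-a}$ on $[0,t/2]$, and the factor $s^{-3/4}$ yields $\sim t^{-3/4}$ on $[t/2,t]$); a bootstrap on $\limsup_{t\to\infty} t^{k/2}\|\nabla^k v(t)\|_{L^2}$ together with the vanishing of the linear part then gives \eqref{est1.thm.nonlin.stability}. This decay argument is carried out in detail in \cite{Maekawa(2017a)} for the case $\delta = 0$ and adapts verbatim to the present setting.
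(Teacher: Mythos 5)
Your proposal is correct and follows essentially the same route as the paper, which omits the proof precisely because it is this standard Kato-type fixed point argument: contraction in the scale-critical norm $\sup_t\|v\|_{L^2}+\sup_t t^{1/2}\|\nabla v\|_{L^2}$ using Theorem \ref{thm.L2L2} and the $L^q$--$L^2$ smoothing \eqref{LpLq.rem.thm.L2L2}, with the decay \eqref{est1.thm.nonlin.stability} obtained by density for the linear part and a $\limsup$ bootstrap for the Duhamel term, exactly as in \cite{Maekawa(2017a)} for $\delta=0$, to which the paper defers.
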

%%%%%%%%%%
%

%%%%%%%%%%%%%%%%%%%%
%%%%%%%%%%%%%%%%%%%%
\subsection{Related results}
%%%%%%%%%%%%%%%%%%%%
%%%%%%%%%%%%%%%%%%%%

Let us refer to the results that are closely related to the present study.

\smallskip

\noindent\underline{{\it Analysis of \eqref{eq.nonlin.prob} and \eqref{eq.lin.prob} when $V\equiv0$.}} For \eqref{eq.nonlin.prob}, the estimate \eqref{est1.thm.nonlin.stability} for $V\equiv0$, which can be viewed as the nonlinear stability of the trivial solution, is classical; see Masuda \cite{Masuda(1984)} for the proof when $k=0$ and Kozono and Ogawa \cite{Kozono-Ogawa(1993)} when $k=1$. These results do not require smallness on the initial data in $L^2_\sigma(\Omega)$. For \eqref{eq.lin.prob}, the $L^p$-$L^q$ estimates of the Stokes semigroup $\{e^{-t {\mathbb A}}\}_{t\ge0}$ are established by Maremonti and Solonnikov \cite{Maremonti-Solonnikov(1997)} and by Dan and Shibata \cite{Dan-Shibata(1999a), Dan-Shibata(1999b)}. We note that all of the results above hold in general exterior domains $\Omega\subset \R^2$. It is pointed out in \cite[Remark 1.4]{Maekawa(2017a)} that the logarithmic singularity of the resolvent $(\lambda + {\mathbb A})^{-1}$ for small $|\lambda|$, observed  in \cite[\S3]{Dan-Shibata(1999a)}, disappears in $(\lambda + {\mathbb A_{\alpha U}})^{-1}$ if $\alpha\neq0$. As compensation, however, singularity appears in the operator norm of $(\lambda + {\mathbb A_{\alpha U}})^{-1}$ for small $|\alpha|$. Such singularity, as discussed in Remark \ref{rem.thm.L2L2} (\ref{item4.rem.thm.L2L2}), also appears in the operator norm of $(\lambda+{\mathbb A}_{V})^{-1}$, and is an obstacle when generalizing the $L^p$-$L^q$ estimates in \eqref{LpLq.rem.thm.L2L2}. Let us mention the study of the boundedness of $\{e^{-t {\mathbb A}}\}_{t\ge0}$ in spaces $L^p_{\sigma}(\Omega)=\overline{C^\infty_{0,\sigma} (\Omega)}^{\|\,\cdot\,\|_{L^p}}$ pioneered by Borchers and Varnhorn \cite{Borchers-Varnhorn(1993)}. See Abe \cite{Abe(2020), Abe(2021)} for the recent progress.

\smallskip

\noindent\underline{{\it Non-symmetric stationary solutions around $V$.}}
We consider the stationary problem of \eqref{eq.NS}, which also admits the explicit solution $V$. It is known that, for suitably chosen $\alpha,\delta$, the fundamental solution for the linearized problem around $V$, namely for the stationary problem of \eqref{eq.lin.prob}, has a better spatial decay compared to the one for the problem linearized around the trivial solution $V\equiv0$. This improvement is due to the vorticity transport by $V$ and implies the resolution of the famous Stokes paradox; see \cite{Chang-Finn(1961), Galdi(2004), Galdi(2011), Kozono-Sohr(1992), Hishida(2016)} for descriptions. Furthermore, these new fundamental solutions allow us to construct non-symmetric solutions for the nonlinear problem decaying in the order $O(|x|^{-1})$. This is done in Hillairet and Wittwer \cite{Hillairet-Wittwer(2013)} when $|\alpha|>\sqrt{48}$ and $\delta=0$ for given zero-flux boundary data in a suitable class, and in \cite{Higaki(2022)} when $\alpha\in\R$ and $\delta>2$ for given external forces with suitable spatial decay. The solutions in \cite{Higaki(2022)} are compatible with the Liouville-type theorem in Guillod \cite[Proposition 4.6]{Guillod(2015)}. We emphasize that the results in \cite{Hillairet-Wittwer(2013), Higaki(2022)} do not require any symmetries on the given data. Interestingly, such improvement in the fundamental solutions occurs even for small $\alpha,\delta$. Indeed, Maekawa and Tsurumi \cite{Maekawa-Tsurumi(2023)} constructs non-symmetric solutions for the nonlinear problem in the whole space $\R^2$, whose principal part at spatial infinity is $c U$ with a small but nonzero constant $c$. This result is contrasting with \cite{Hillairet-Wittwer(2013)} in view of the size of coefficients, and the reason is that, as there are no boundaries in $\R^2$, the terms needed to match the no-slip boundary condition in exterior domains do not appear in the problem.

%%%%%%%%%%%%%%%%%%%%
%%%%%%%%%%%%%%%%%%%%
\subsection{Outlined proof}
%%%%%%%%%%%%%%%%%%%%
%%%%%%%%%%%%%%%%%%%%

We describe the proof of Theorem \ref{thm.L2L2}. However, the estimate \eqref{est.thm.L2L2} is almost a direct consequence of the estimate of the resolvent $(\lambda + {\mathbb A}_V)^{-1}$ in Proposition \ref{prop.resol.LpLq}. Hence we give in Appendix \ref{app.proof.thm.L2L2} the proof that derives Theorem \ref{thm.L2L2} from Proposition \ref{prop.resol.LpLq}, and outline here the proof of Proposition \ref{prop.resol.LpLq}. As noted in Remark \ref{rem.thm.L2L2} (\ref{item2.rem.thm.L2L2}), it consists of two steps:

\noindent {\bf (I) Spectral analysis of $-{\mathbb A}_V$.} 
Recall that $\sigma(-{\mathbb A}_V)=\R_{\le0}\cup\sigma_{{\rm disc}}(-{\mathbb A}_V)$. Thus we identify the location of the discrete spectrum $\sigma_{{\rm disc}}(-{\mathbb A}_V)$ to obtain the large-time decay of $\{e^{-t {\mathbb A}_V}\}_{t\ge0}$. For this purpose, we consider the homogeneous equation of \eqref{intro.eq.resol} and its general solutions, by using the streamfunction-vorticity equations. We see that the no-slip boundary condition imposes that $\lambda$ belongs to $\sigma_{{\rm disc}}(-{\mathbb A}_V)$ if and only if $\lambda$ belongs to 
$$
\bigcup_{n\in\Z}
\{\lambda\in\C\setminus\R_{\le0}~|~F_n(\sqrt{\lambda})=0\}. 
$$
Here $F_n=F_n(z)$ is the analytic function defined in \eqref{def.Fn} in Section \ref{sec.spec.anal.}. For $|n|\neq1$, one can show that $F_n(\sqrt{\cdot})$ has no zeros in the sector $\Sigma_{\frac{3}{4}\pi}$ by energy method if $|\alpha|+|\delta|$ is sufficiently small; see Propositions \ref{prop.spec.small} and \ref{prop.charact.disc.ev}. However, for $|n|=1$, we need to deal with the function $F_n$ directly to determine the location of its zeros, which reflects the fact that the Hardy inequality does not hold in two-dimensional exterior domains. We will prove that $F_n$ with $|n|=1$ has no zeros in sectors $\Sigma_{\frac{3}{4}\pi-\ep}$ for $\ep\in(0,\frac{\pi}4)$ if $\delta\ge0$ and $|\alpha|+\delta$ is sufficiently small depending on $\ep$. The proof is the most tricky part of this paper and will take the whole of Section \ref{sec.quant.anal.disc.spec}. We perform an asymptotic analysis of $F_n$ that refines the methods in \cite{Maekawa(2017a),Higaki(2019)}. Interestingly, the analysis is highly dependent on the sign of $\delta$ being positive or negative. Furthermore, we observe that the condition $\delta>0$, which is also an assumption of Theorem \ref{thm.L2L2}, provides a certain stabilizing effect compared to the case $\delta=0$; see Remark \ref{rem.prop.est.Fn}.

\noindent {\bf (I\hspace{-.1em}I) Estimate of the resolvent $(\lambda + {\mathbb A}_V)^{-1}$.} 
In the next step, we estimate the solution of \eqref{intro.eq.resol} for $\lambda$ belonging to the resolvent set. We derive and estimate an explicit formula for the solution using the streamfunction-vorticity equations. The computations are lengthy ones estimating the formulas involving the modified Bessel functions, but the approach itself is broadly the same as that used in \cite{Maekawa(2017a),Higaki(2019)}. Thus we omit some details; see Section \ref{sec.resol.est}.

\smallskip

This paper is organized as follows. In Section \ref{sec.preliminaries}, we collect the items used in this paper. In Sections \ref{sec.spec.anal.} and \ref{sec.quant.anal.disc.spec}, we study the spectrum of the operator $-{\mathbb A}_V$. We apply the perturbation theory of operators in Section \ref{sec.spec.anal.} and perform an asymptotic analysis of $F_n$ with $|n|=1$ in Section \ref{sec.quant.anal.disc.spec}. In Section \ref{sec.resol.est}, we provide the estimate of the resolvent. Some facts about the modified Bessel functions and technical supplements are given in Appendices \ref{appendix.bessel}, \ref{app.hom.eq.vor} and \ref{app.proof.thm.L2L2}.

{\bf Notations.} We let $C$ denote a constant and $C(a,b,c,\ldots)$ the constant depending on $a,b,c,\ldots$. Both of these may vary from line to line. We denote $\R^\ast=\{x\in\R~|~x\neq0\}$, $\R_{\ge0} = \{x\in\R~|~x \ge 0\}$, $\R_{\le0} = \{x\in\R~|~x \le 0\}$ and $\Sigma_{\phi} = \{z\in\C\setminus\{0\}~|~|\oparg z|<\phi\}$. For $z\in\C$, let $\Re z$ and $\Im z$ denote the real and imaginary parts of $z$, respectively. For $z\in\C\setminus\R_{\le0}$, let $z^\mu$ denote $e^{\mu\opLog z}$ where $z=|z|e^{i\oparg z}$, $\oparg z\in(-\pi,\pi)$ and $\opLog z = \log|z| + i\oparg z$. We take the square root $\sqrt{z}$ so that $\Re \sqrt{z}>0$. We use the function spaces 
$$
\widehat{W}^{1,2}(\Omega)
=\{p\in L^2_{{\rm loc}}(\overline{\Omega})~|~\nabla p\in L^2(\Omega)^2\}
$$
and
$$
C^\infty_{0,\sigma}(\Omega)
=\{\varphi\in C^\infty_0(\Omega)^2
~|~\opdiv\varphi=0\}, 
\qquad 
L^2_{\sigma}(\Omega) 
= 
\overline{C^\infty_{0,\sigma} (\Omega)}^{\|\,\cdot\,\|_{L^2}}. 
$$
Not to burden notation, we use the same symbols to denote the quantities for scalar-, vector- or tensor-valued functions, e.g., $\langle \cdot, \cdot\rangle$ is the inner product on $L^2(\Omega)$, $L^2(\Omega)^2$ or $L^2(\Omega)^{2\times2}$.

%%%%%%%%%%%%%%%%%%%%%%%%%%%%%%%%%%%%%%%%
%%%%%%%%%%%%%%%%%%%%%%%%%%%%%%%%%%%%%%%%
\section{Preliminaries}\label{sec.preliminaries}
%%%%%%%%%%%%%%%%%%%%%%%%%%%%%%%%%%%%%%%%
%%%%%%%%%%%%%%%%%%%%%%%%%%%%%%%%%%%%%%%%

This section collects the items used throughout the paper.

%%%%%%%%%%%%%%%%%%%%
%%%%%%%%%%%%%%%%%%%%
\subsection{Vectors in the polar coordinates}\label{subsec.vec.in.polar.coord.}
%%%%%%%%%%%%%%%%%%%%
%%%%%%%%%%%%%%%%%%%%

The polar coordinates on the exterior unit disk $\Omega$ are written as 
\begin{align*}
&x_1 = r\cos \theta, 
\qquad
x_2 = r\sin \theta, 
\quad 
r\in [1,\infty), 
\quad 
\theta\in [0,2\pi),\\
&
{\bf e}_r = \frac{x}{|x|}, 
\qquad 
{\bf e}_\theta = \frac{x^\bot }{|x|} = \partial_\theta {\bf e}_r. 
\end{align*}
Let a vector field $v=(v_1,v_2)$ on $\Omega$ be given. We set
\begin{align*}
v = v_r(r,\theta) {\bf e}_r + v_\theta(r,\theta) {\bf e}_\theta, 
\qquad
v_r = v\cdot {\bf e}_r, 
\qquad
v_\theta = v\cdot {\bf e}_\theta, 
\end{align*}
and for a given $n\in\Z$, 
\begin{align}\label{def.P_n}
\begin{split}
\mathcal{P}_n v(r,\theta)
& = v_{r,n}(r) e^{i n \theta} {\bf e}_r + v_{\theta,n}(r) e^{i n \theta} {\bf e}_\theta, \\
v_{r,n} (r) 
& := 
\frac{1}{2\pi} 
\int_0^{2\pi} 
v_r(r\cos\sigma, r\sin\sigma) e^{-in\sigma} \dd\sigma,\\
v_{\theta,n} (r) 
& := 
\frac{1}{2\pi} 
\int_0^{2\pi} 
v_\theta(r\cos\sigma, r\sin\sigma) e^{-in\sigma} \dd\sigma. 
\end{split}
\end{align}
We will use the formulas 
\begin{align}\label{formulas.polar1}
\begin{split}
|\nabla v |^2 
& = 
|\partial_r v_r|^2 + |\partial_r v_\theta|^2 
+ \frac{1}{r^2} 
(|\partial_\theta v_r - v_\theta |^2 + |v_r + \partial_\theta v_\theta |^2), \\
\opdiv v 
& = \partial_1 v_1 + \partial_2 v_2 
= \frac1r \Big(\partial_r (r v_r) + \partial_\theta v_\theta \Big), \\
\oprot v 
& = \partial_1 v_2 - \partial_2 v_1 
= \frac1r \Big(\partial_r (r v_\theta) - \partial_\theta v_r\Big), 
%{\bf e}_r \cdot \nabla v & = (\partial_r v_r ) {\bf e}_r  + (\partial_r v_\theta) {\bf e}_\theta, \label{polar.er} \\
%{\bf e}_\theta \cdot \nabla v &= \frac{\partial_\theta v_r - v_\theta}{r} {\bf e}_r + \frac{v_r + \partial_\theta v_\theta}{r} {\bf e}_\theta, \label{polar.etheta} \\
\end{split}
\end{align}
and
\begin{align}\label{formulas.polar2}
\begin{split}
-\Delta v 
& = \Big\{ -\partial_r \Big( \frac1r \partial_r (r v_r ) \Big)  
- \frac{1}{r^2} \partial_\theta^2 v_r 
+ \frac{2}{r^2} \partial_\theta v_\theta \Big\} {\bf e}_r \\
&\quad
+  \Big\{ - \partial_r \Big( \frac1r \partial_r (r v_\theta) \Big) 
- \frac{1}{r^2} \partial_\theta^2 v_\theta 
- \frac{2}{r^2} \partial_\theta v_r \Big\} {\bf e}_\theta.
\end{split}
\end{align}
%

%%%%%%%%%%%%%%%%%%%%
%%%%%%%%%%%%%%%%%%%%
\subsection{Fourier series and decomposition}\label{subsec.fourier.series.decom}
%%%%%%%%%%%%%%%%%%%%
%%%%%%%%%%%%%%%%%%%%

Let $n\in \Z$ and $\mathcal{P}_n$ be defined in \eqref{def.P_n}. We set, for a vector field $v = v(r,\theta)$ on $\Omega$, 
\begin{align}\label{def.v_n}
v_n(r,\theta) = \mathcal{P}_n v(r,\theta), 
\end{align}
for a scalar function $\omega=\omega(r,\theta)$ on $\Omega$, 
\begin{align}\label{def.omega_n}
\begin{split}
\mathcal{P}_n\omega(r,\theta)
&= 
\bigg( 
\frac{1}{2\pi} 
\int_0^{2\pi} 
\omega (r\cos\sigma, r\sin\sigma) e^{- in\sigma} 
\dd \sigma 
\bigg) e^{i n \theta}, \\
\omega_n(r)
&= 
(\mathcal{P}_n \omega) e^{-i n \theta}, 
\end{split}
\end{align}
and for a function space $X(\Omega)\subset L^{1}_{\rm loc}(\overline{\Omega})^2$ or $X(\Omega)\subset L^{1}_{\rm loc}(\overline{\Omega})$, 
\begin{align*}
\mathcal{P}_n X(\Omega)
=\big\{\mathcal{P}_n f ~\big|~ f\in X(\Omega)\big\}. 
\end{align*}
The definition of $f_n$ differs according to whether $f$ is vectorial or scalar. The former and latter are defined in \eqref{def.v_n} as $f_n=\mathcal{P}_n f$ and in \eqref{def.omega_n} as $f_n=(\mathcal{P}_n f) e^{-i n \theta}$, respectively.

By definition, any vector field $v\in L^2(\Omega)^2$ is expanded into the convergent series 
$$
v = \sum_{n\in\Z} \mathcal{P}_n v = \sum_{n\in\Z} v_n, 
$$
and $\mathcal{P}_n$ is an  orthogonal projection of $L^2(\Omega)^2$ onto $\mathcal{P}_n L^2(\Omega)^2$. Moreover, the following orthogonal decomposition of the subspace $L^2_\sigma(\Omega) \subset L^2(\Omega)^2$ holds: 
\begin{align}\label{def.L2.sigma.n}
L^2_\sigma(\Omega) = \bigoplus_{n\in\Z} L^2_{\sigma,n}(\Omega), 
\qquad 
L^2_{\sigma,n}(\Omega) := \mathcal{P}_n L^2_{\sigma}(\Omega). 
\end{align}

From \eqref{formulas.polar1}, we have 
\begin{align*}
\|\nabla v\|_{L^2}^2 
&= \sum_{n\in \Z} \|\nabla \mathcal{P}_n v\|_{L^2}^2, \\
|\nabla \mathcal{P}_n v |^2 
&= |\partial_r v_{r,n}|^2 
+ |\partial _r v_{\theta,n}|^2 
+ \frac{1+n^2}{r^2} (|v_{r,n}|^2 + |v_{\theta,n}|^2)
- \frac{4 n}{r^2} \Im( v_{\theta,n} \overline{v_{r,n}}). 
\end{align*}
In particular, 
$$
|\partial_r v_{r,n}|^2 
+ |\partial _r v_{\theta,n}|^2 
+ \frac{(|n|-1)^2}{r^2} (|v_{r,n}|^2 + |v_{\theta,n}|^2)
\le |\nabla \mathcal{P}_n v|^2. 
$$
Therefore, if $|n|\neq1$, the Hardy-type inequality
$$
\Big\|
(r,\theta) \mapsto \frac{\mathcal{P}_n v(r,\theta)}{r}\Big\|_{L^2} 
\le \|\nabla \mathcal{P}_n v\|_{L^2} 
$$
holds. Thus it is convenient to set 
$$
v_{\neq} = v - \sum_{|n|=1} \mathcal{P}_n v, 
$$
for which we have 
\begin{align}\label{Hardy.ineq.nonzero}
\Big\|
(r,\theta) \mapsto \frac{v_{\neq}(r,\theta)}{r}\Big\|_{L^2} 
\le \|\nabla v_{\neq}\|_{L^2}. 
\end{align}

Again from \eqref{formulas.polar1}, we have, for $v\in W^{1,2}(\Omega)^2$, 
$$
\mathcal{P}_n
\opdiv v = \opdiv \mathcal{P}_n v, 
\qquad
\mathcal{P}_n \oprot v = \oprot \mathcal{P}_n v 
$$
and, from \eqref{formulas.polar2}, for $v\in W^{2,2}(\Omega)^2$, 
$$
\mathcal{P}_n \Delta v = \Delta \mathcal{P}_n v. 
$$
Since the condition ${\bf e}_r\cdot v=0$ on $\partial\Omega$ is preserved under $\mathcal{P}_n$, it can be shown that 
\begin{align*}
%\mathcal{P}_n C^\infty_{0,\sigma}(\Omega)\subset C^\infty_{0,\sigma}(\Omega), \qquad
\mathcal{P}_n \mathbb{P} 
= \mathbb{P} \mathcal{P}_n,
\qquad
L^2_{\sigma,n}(\Omega) 
= \overline{\mathcal{P}_n C^\infty_{0,\sigma}(\Omega)}^{\|\,\cdot\,\|_{L^2}}. 
\end{align*}
We refer to Farwig and Neustupa \cite[Lemma 3.1]{Farwig-Neustupa(2007)} for a more detailed proof. Although the proof in \cite{Farwig-Neustupa(2007)} is for the three-dimensional cases, a similar argument is applicable.

Now we define the closed linear operator $\mathbb{A}_{n}$ on $L^2_{\sigma,n}(\Omega)$ in \eqref{def.L2.sigma.n} by 
$$
\mathbb{A}_{n} = \mathbb{A}|_{L^2_{\sigma,n}(\Omega) \cap D(\mathbb{A})},
\qquad D(\mathbb{A}_{n}) = L^2_{\sigma,n}(\Omega) \cap D(\mathbb{A}). 
$$
It is not hard to see that $\mathbb{A}_{n}$ is nonnegative and self-adjoint. Also, keeping the relation 
$$
\mathcal{P}_n {\mathbb P} V^\bot \oprot v 
= {\mathbb P} V^\bot \oprot v_n, \quad v\in W^{1,2}(\Omega)^2 
$$
in mind, we define the closed linear operator $\mathbb{A}_{V, n}$ on $L^2_{\sigma,n}(\Omega)$ by 
$$
\mathbb{A}_{V,n} = \mathbb{A}_{V}|_{L^2_{\sigma,n}(\Omega) \cap D(\mathbb{A}_V)}, 
\qquad D(\mathbb{A}_{V,n}) = D(\mathbb{A}_{n}). 
$$

%%%%%%%%%%%%%%%%%%%%
%%%%%%%%%%%%%%%%%%%%
\subsection{Equations in the polar coordinates}\label{subsec.eqs.polar.coord}
%%%%%%%%%%%%%%%%%%%%
%%%%%%%%%%%%%%%%%%%%

To study the operator $\mathbb{A}_{V,n}$, we consider 
\begin{equation}\tag{R$_n$}\label{eq.resol.n}
(\lambda + \mathbb{A}_{V,n}) v_n = f_n 
\end{equation}
for given $\lambda\in\C\setminus\R_{\le0}$ and $f_n\in L^2_{\sigma,n}(\Omega)$. The equation is equivalent to the system 
\begin{equation}\label{eq.resol.n.sys}
\left\{
\begin{array}{ll}
\lambda v_n - \Delta v_n + V^\bot \oprot v_n
+ \nabla q_n
=f_n &\mbox{in}\ \Omega \\
\opdiv v_n = 0 &\mbox{in}\ \Omega \\
v_n = 0 &\mbox{on}\ \partial\Omega, 
\end{array}\right.
\end{equation}
with some pressure $\nabla q_n$. Operating $\oprot$ to the first line, we see that $\oprot v_n$ solves 
\begin{equation}\label{eq.resol.n.sys.vor}
\lambda (\oprot v_n) 
- \Delta (\oprot v_n)
+ V\cdot\nabla(\oprot v_n)
= \oprot f_n. 
\end{equation}
In the polar coordinates on $\Omega$ where $v_n=v_n(r,\theta)$ is written as 
$$
v_n(r,\theta) = v_{r,n}(r) e^{i n \theta} {\bf e}_r + v_{\theta,n}(r) e^{i n \theta} {\bf e}_\theta, 
$$
we see from \eqref{eq.resol.n.sys} that $(v_{r,n}(r),v_{\theta,n}(r))$ and $q_n(r)$ satisfy 
\begin{align}
&\begin{aligned}\label{eq.polar.vr}
&
\lambda v_{r,n}
-\frac{\dd}{\dd r} \Big(\frac1r\frac{\dd}{\dd r} (r v_{r,n})\Big)
+ \frac{n^2}{r^2} v_{r,n}
+ \frac{2in}{r^2} v_{\theta,n} \\
&\qquad\qquad
- \frac{\alpha}{r^2} \Big(\frac{\dd}{\dd r} (r v_{\theta,n}) - in v_{r,n}\Big)
+ \frac{\dd q_n}{\dd r} 
= f_{r,n}, \quad r>1, \\
\end{aligned} \\
&\begin{aligned}\label{eq.polar.vtheta}
&
\lambda v_{\theta,n}
-\frac{\dd}{\dd r} \Big(\frac1r\frac{\dd}{\dd r} (r v_{\theta,n})\Big) 
+ \frac{n^2}{r^2} v_{\theta,n}
- \frac{2in}{r^2} v_{r,n} \\
&\qquad\qquad
- \frac{\delta}{r^2} \Big(\frac{\dd}{\dd r} (r v_{\theta,n}) - in v_{r,n}\Big)
+ \frac{i n}{r} q_n 
= f_{\theta,n}, \quad r>1 
\end{aligned}
\end{align}
and the divergence-free and the no-slip boundary conditions 
\begin{align}\label{eq.polar.div-free.no-slip}
\frac{\dd}{\dd r}(r v_{r,n}) + in v_{\theta,n}=0, 
\quad 
r>1, 
\qquad 
v_{r,n}(1)=v_{\theta,n}(1)=0. 
\end{align}
Moreover, from \eqref{eq.resol.n.sys.vor}, $\omega_n(r):=(\oprot v_n)_n(r)$ satisfies 
\begin{align}\label{eq.polar.vor}
- \frac{\dd^2 \omega_n}{\dd r^2} 
- \frac{1+\delta}{r} \frac{\dd \omega_n}{\dd r} 
+ \Big(\lambda + \frac{n^2+i\alpha n}{r^2} \Big) \omega_n 
= (\oprot f_n)_n, 
\quad 
r>1. 
\end{align}
%

%%%%%%%%%%%%%%%%%%%%
%%%%%%%%%%%%%%%%%%%%
\subsection{Biot-Savart law}\label{subsec.biot-savart}
%%%%%%%%%%%%%%%%%%%%
%%%%%%%%%%%%%%%%%%%%

To simplify the explanation, only in this subsection, we use the function space 
$$
L^\infty_s(\Omega) 
= \{ f\in L^\infty(\Omega)~|~ \|f\|_{L^\infty_s} <\infty \},
\qquad 
\|f\|_{L^\infty_s} := \esssup_{x\in\Omega}\, |x|^s |f(x)|. 
$$
For a given $\omega\in L^\infty_2(\Omega)$, we consider the Poisson equation
\begin{equation*}
\left\{
\begin{array}{ll}
-\Delta \psi = \omega&\mbox{in}\ \Omega \\
\psi=0&\mbox{on}\ \partial\Omega. 
\end{array}\right.
\end{equation*}
Let $\omega\in\mathcal{P}_n L^\infty_2(\Omega)$ with $|n|\ge 1$ and let $\psi$ be the decaying solution, called the streamfunction. Applying the notation in \eqref{def.omega_n}, we find that $\psi_n=\psi_n(r)$ satisfies 
\begin{align}\label{eq.streamfunc.}
-\frac{\dd^2 \psi_n}{\dd r^2}  
- \frac{1}{r} \frac{\dd \psi_n}{\dd r} 
+ \frac{n^2}{r^2} \psi_n 
= \omega_n,  \quad r>1,  
\qquad 
\psi_n (1) =0.
\end{align}
By elementary computation, we see that $\psi_n= \psi_n[\omega_n]$ is given by 
\begin{align}\label{def.psi_n}
\begin{split}
\psi_n[\omega_n] (r) 
&= \frac{1}{2 |n|} 
\bigg(-d_n [\omega_n] r^{-|n|} \\
&\qquad\qquad
+ r^{-|n|} \int_1^r s^{|n|+1} \omega_n (s) \dd s   
+ r^{|n|}\int_r^\infty s^{-|n|+1} \omega_n (s) \dd s \bigg ), \\
d_n [\omega_n] 
&:= \int_1^\infty s^{-|n|+1} \omega_n (s) \dd s. 
\end{split}
\end{align}
The following vector field 
\begin{align}\label{def.Biot-Savart}
\begin{split}
&\mathcal{V}_n [\omega_n](r,\theta) 
= \mathcal{V}_{r,n}[\omega_n](r) e^{i n \theta}{\bf e}_r  
+  \mathcal{V}_{\theta,n}[\omega_n](r) e^{i n\theta} {\bf e}_\theta, \\
& \mathcal{V}_{r,n} [\omega_n](r)
:= \frac{i n}{r} \psi_n [\omega_n](r), \qquad 
\mathcal{V}_{\theta,n} [\omega_n](r)
:=-\frac{\dd}{\dd r} \psi_n [\omega_n](r)
\end{split}
\end{align}
is called the Biot-Savart law. It is straightforward to see that
\begin{align}\label{properties.Biot-Savart}
\begin{split}
&\opdiv \mathcal{V}_n[\omega_n] 
= 0, 
\qquad
\oprot \mathcal{V}_n[\omega_n](r,\theta) 
= \omega_n(r) e^{in\theta}, 
\qquad 
({\bf e}_r \cdot \mathcal{V}_n [\omega_n])|_{\partial\Omega} = 0. 
\end{split}
\end{align}
If additionally $\omega\in L^\infty_\rho(\Omega)$ with some $\rho>2$, we can check that $\mathcal{V}_n [\omega_n]\in W^{1,2}(\Omega)^2$.

Here are useful two propositions in the subsequent sections. The reader is referred to \cite[Proposition 2.6 and Lemma 3.1]{Maekawa(2017a)}, \cite[Proposition 2.1]{Higaki(2022)} for the proof of the first proposition and \cite[Corollary 2.7]{Maekawa(2017a)}, \cite[Proposition 2.2]{Higaki(2022)} for the proof of the second.
%
%%%%%%%%%%
\begin{proposition}\label{prop.biot-savart}
Let $|n|\ge1$ and $v_n\in \mathcal{P}_n W^{1,2}_0(\Omega)^2$. Set $\omega_n=(\oprot v_n)_n$. If $\opdiv v_n=0$ and $\omega_n\in L^\infty_\rho(\Omega)$ for some $\rho>2$, we have $v_n=\mathcal{V}_n[\omega_n]$ and $d_n[\omega_n]=0$ in \eqref{def.psi_n}. 
\end{proposition}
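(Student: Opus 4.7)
The plan is to pass to the scalar streamfunction attached to the $n$-th Fourier mode and reduce the claim to elementary analysis of a second-order Euler-type ODE in which the two boundary conditions at $r=1$ are balanced against the decay at infinity. First I would use that $\opdiv v_n = 0$ written in polar form via \eqref{formulas.polar1} reads
\[
\frac{\dd}{\dd r}(r v_{r,n}) + in\, v_{\theta,n} = 0,
\]
which, since $|n|\ge 1$, permits the introduction of a streamfunction $\Psi_n = \Psi_n(r)$ via
\[
v_{r,n}(r) = \frac{in}{r}\Psi_n(r), \qquad v_{\theta,n}(r) = -\Psi_n'(r).
\]
A short calculation using the polar formula for $\oprot$ then shows that $\Psi_n$ satisfies \eqref{eq.streamfunc.}, the same ODE as the reference streamfunction $\psi_n[\omega_n]$ of \eqref{def.psi_n}. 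Hence $\Psi_n = \psi_n[\omega_n] + C_+ r^{|n|} + C_- r^{-|n|}$ for some constants $C_\pm$, since the homogeneous equation associated with \eqref{eq.streamfunc.} is Euler-type with indicial exponents $\pm|n|$.

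Second, I would eliminate the growing mode $C_+ r^{|n|}$ using $v_n \in L^2(\Omega)^2$. Under the hypothesis $\omega_n \in L^\infty_\rho(\Omega)$ with $\rho>2$, a direct estimate of the two integral terms in \eqref{def.psi_n} yields $\psi_n[\omega_n](r) = O(r^{2-\rho}) + O(r^{-|n|}\log r)$ as $r\to\infty$, and likewise for $\psi_n[\omega_n]'$, so both decay. A nonzero $C_+$ would then produce $v_{r,n}\sim inC_+ r^{|n|-1}$ and $v_{\theta,n}\sim -|n|C_+ r^{|n|-1}$ for large $r$, which violates $\int_1^\infty |v_n|^2 r\,\dd r<\infty$ when $|n|\ge 1$. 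Thus $C_+ = 0$.

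Third, the remaining constant $C_-$ and the value of $d_n[\omega_n]$ are fixed by the no-slip condition, which in the present representation is equivalent to $\Psi_n(1)=0$ and $\Psi_n'(1)=0$. A direct evaluation of \eqref{def.psi_n} at $r=1$ gives
\[
\psi_n[\omega_n](1) = \frac{1}{2|n|}\bigl(-d_n[\omega_n] + d_n[\omega_n]\bigr) = 0,
\]
so $\Psi_n(1) = C_- = 0$. Differentiating \eqref{def.psi_n} and noting that the boundary terms produced by differentiating the two integrals cancel the $\omega_n$-contribution, one finds $\psi_n[\omega_n]'(1) = d_n[\omega_n]$, and then $\Psi_n'(1) = 0$ enforces $d_n[\omega_n] = 0$. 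Both assertions follow: $\Psi_n = \psi_n[\omega_n]$, whence $v_n = \mathcal{V}_n[\omega_n]$ by \eqref{def.Biot-Savart}.

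The main point that I expect to require some care is the decay estimate for $\psi_n[\omega_n]$ and $\psi_n[\omega_n]'$ in the borderline range of $\rho$ just above $2$ (in particular when $|n|+2-\rho=0$, where a logarithmic factor appears), since eliminating the $r^{|n|}$-mode hinges on combining this decay with the $L^2$-integrability of $v_n$. The rest is bookkeeping on the explicit formula \eqref{def.psi_n}, and the hypothesis $v_n\in \mathcal{P}_n W^{1,2}_0(\Omega)^2$ provides the regularity needed so that the passage from the PDE to the ODE for $\Psi_n$ is rigorous in a pointwise a.e.\ sense on $(1,\infty)$.
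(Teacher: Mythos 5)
Your proof is correct, and it follows the standard streamfunction/ODE comparison argument (decompose $\Psi_n-\psi_n[\omega_n]$ into the harmonic modes $r^{\pm|n|}$, kill $r^{|n|}$ by $L^2$-integrability, then use the two no-slip conditions together with $\psi_n[\omega_n](1)=0$ and $\psi_n[\omega_n]'(1)=d_n[\omega_n]$ to force $C_-=0$ and $d_n[\omega_n]=0$). The paper itself omits the proof and cites \cite{Maekawa(2017a)} and \cite{Higaki(2022)}, where essentially this same argument is carried out, so your proposal matches the intended proof.
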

%%%%%%%%%%
%

%
%%%%%%%%%%
\begin{proposition}\label{prop.rot.zero}
Let $|n|\ge1$ and $f_n\in \mathcal{P}_n L^{2}(\Omega)^2$. If $\oprot f_n = 0$ in the sense of distributions, we have $f=\nabla \mathcal{P}_n p$ for some $\mathcal{P}_n p\in \mathcal{P}_n \widehat{W}^{1,2}(\Omega)$. 
\end{proposition}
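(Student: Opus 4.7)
The plan is to exploit the single-mode structure of $f_n$ to construct the potential directly from its radial profile. Writing
$$
f_n(r,\theta) = f_{r,n}(r) e^{in\theta}\,{\bf e}_r + f_{\theta,n}(r) e^{in\theta}\,{\bf e}_\theta,
$$
the formula for $\oprot$ in \eqref{formulas.polar1} converts the hypothesis $\oprot f_n = 0$ into the one-dimensional distributional identity
$$
\frac{d}{dr}(r f_{\theta,n}) - i n f_{r,n} = 0 \quad \text{on } (1,\infty).
$$

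Now I would set $p(r,\theta) := p_n(r) e^{in\theta}$ with $p_n(r) := r f_{\theta,n}(r)/(in)$, which is meaningful exactly because $|n|\ge 1$. Since $p$ lies in the $n$-th Fourier mode, $\mathcal{P}_n p = p$. The tangential component of the gradient in polar coordinates gives $\tfrac{1}{r}\partial_\theta p = f_{\theta,n} e^{in\theta}$ automatically, while the radial component is
$$
\partial_r p \,=\, \frac{1}{in}\frac{d}{dr}(r f_{\theta,n})\, e^{in\theta} \,=\, f_{r,n}\, e^{in\theta},
$$
where the second equality is the curl-free identity recorded above. Hence $\nabla p = f_n$.

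It remains to check $p \in \widehat{W}^{1,2}(\Omega)$. By construction $\nabla p = f_n \in L^2(\Omega)^2$, and for local integrability the identity $\|f_n\|_{L^2(\Omega)}^2 = 2\pi \int_1^\infty (|f_{r,n}|^2 + |f_{\theta,n}|^2)r\,dr$ yields, for each $R>1$,
$$
\int_1^R r^2|f_{\theta,n}(r)|^2\,dr \,\le\, R\int_1^R r|f_{\theta,n}(r)|^2\,dr \,\le\, \frac{R}{2\pi}\|f_n\|_{L^2(\Omega)}^2,
$$
so $p \in L^2_{\rm loc}(\overline{\Omega})$. The only subtle point is the rigorous interpretation of the distributional ODE for $r f_{\theta,n}$, but because the Fourier projection reduces everything to a one-variable identity this is entirely routine; the only obstruction that the construction could face, namely dividing by $n$, is exactly what the hypothesis $|n|\ge 1$ rules out.
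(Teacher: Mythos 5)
Your construction is correct and is essentially the same argument that underlies this proposition, which the paper itself does not prove but delegates to \cite[Corollary 2.7]{Maekawa(2017a)} and \cite[Proposition 2.2]{Higaki(2022)}: reduce the curl-free condition to the radial ODE $\frac{\dd}{\dd r}(rf_{\theta,n})=inf_{r,n}$ and integrate it into a single-mode potential $p_n(r)e^{in\theta}$, using $|n|\ge1$ to divide by $n$. One cosmetic remark: since $|p|=r|f_{\theta,n}|/|n|$ and the area element carries a factor $r$, membership $p\in L^2_{\rm loc}(\overline{\Omega})$ requires bounding $\int_1^R r^3|f_{\theta,n}|^2\,\dd r$, which follows exactly as in your display with $R^2$ in place of $R$, so the conclusion is unaffected.
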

%%%%%%%%%%
%

%%%%%%%%%%%%%%%%%%%%%%%%%%%%%%%%%%%%%%%%
%%%%%%%%%%%%%%%%%%%%%%%%%%%%%%%%%%%%%%%%
\section{Spectral analysis}\label{sec.spec.anal.}
%%%%%%%%%%%%%%%%%%%%%%%%%%%%%%%%%%%%%%%%
%%%%%%%%%%%%%%%%%%%%%%%%%%%%%%%%%%%%%%%%

In this section, we study the spectrum of the operator $-{\mathbb A}_V$. The main result is Proposition \ref{prop.charact.disc.ev} which characterizes the discrete spectrum $\sigma_{{\rm disc}}(-{\mathbb A}_V)$ as zeros of certain analytic functions. We are aware that the presentation in Subsections \ref{subsec.notation.spec.anal.} and \ref{subsec.perturb.thr} has similarity to \cite{Maekawa(2017a)} treating the case $\delta=0$. This is quite natural because, in analysis in the $L^2$-framework, especially in computation of the numerical ranges, one can control terms involving $\delta W$ by using the Hardy-type inequality \eqref{Hardytype.ineq.W}. Consequently, for example, the statement of Proposition \ref{prop.spec.small} holds independently of sufficiently small $\delta$. However, the difference from the case $\delta=0$ appears when one studies the spectrum of $-{\mathbb A}_V$. Indeed, in Proposition \ref{prop.charact.disc.ev}, the functions characterizing the discrete spectrum depend both on $\alpha$ and $\delta$. These functions will be studied in detail quantitatively in the next section.

%%%%%%%%%%%%%%%%%%%%
%%%%%%%%%%%%%%%%%%%%
\subsection{Notation}\label{subsec.notation.spec.anal.}
%%%%%%%%%%%%%%%%%%%%
%%%%%%%%%%%%%%%%%%%%

Let us recall the standard notation in the perturbation theory. Our main reference is Kato \cite{Kato(1976)}. Let $X$ be a Banach space and $\mathbb{L}: D(\mathbb{L})\subset X \to X$ be a closed linear operator. We let $N(\mathbb{L})$ denote the null space of $\mathbb{L}$, $R(\mathbb{L})$ its range, and $X/R(\mathbb{L})$ the quotient space of $X$ by $R(\mathbb{L})$. Moreover, $\rho(\mathbb{L})$ denotes the resolvent set of $\mathbb{L}$, $\sigma(\mathbb{L})$ its spectrum, and $\sigma_{{\rm disc}}(\mathbb{L})$ its discrete spectrum, namely, the set of isolated eigenvalues of $\mathbb{L}$ with finite multiplicity. The operator $\mathbb{L}$ is said to be semi-Fredholm if $R(\mathbb{L})$ is closed and at least one of $\operatorname{dim} N(\mathbb{L})$ or $\operatorname{dim} X/R(\mathbb{L})$ is finite. If $\mathbb{L}$ is semi-Fredholm, the index of $\mathbb{L}$ 
$$
\opind(\mathbb{L}) = \opdim N(\mathbb{L}) - \opdim X/R(\mathbb{L}) 
$$
is well-defined, taking values in $[-\infty,\infty]$. Finally, let us set 
$$
\rho_{{\rm sf}}(\mathbb{L})
= \{\lambda\in\C~|~\text{$\lambda-\mathbb{L}$ is semi-Fredholm}\}, 
\qquad
\sigma_{{\rm ess}}(\mathbb{L}) 
= \C\setminus\rho_{{\rm sf}}(\mathbb{L})
$$
and call the semi-Fredholm domain of $\mathbb{L}$ and the essential spectrum of $\mathbb{L}$, respectively.

Generally, $\rho_{{\rm sf}}(\mathbb{L})$ is the union of a countable (at most) family of connected open sets. From the argument in \cite[Chapter $\mathrm{I}\hspace{-1.2pt}\mathrm{V}$ \S5 6]{Kato(1976)}, we see that $\operatorname{ind}(\lambda-\mathbb{L})$ is a constant function of $\lambda$ in each component $G$ of $\rho_{{\rm sf}}(\mathbb{L})$. Moreover, both $\operatorname{dim} N(\lambda-\mathbb{L})$ and $\operatorname{dim} X/R(\lambda-\mathbb{L})$ are constants in each $G$ except for an isolated set of values of $\lambda$. Therefore, if these constants are zero in particular, then $G$ is contained in $\rho(\mathbb{L})$ with possible exception of isolated points of $\sigma(\mathbb{L})$, which are, isolated eigenvalues of finite algebraic multiplicity.

%%%%%%%%%%%%%%%%%%%%
%%%%%%%%%%%%%%%%%%%%
\subsection{Perturbation theory}\label{subsec.perturb.thr}
%%%%%%%%%%%%%%%%%%%%
%%%%%%%%%%%%%%%%%%%%

We start with the perturbation theory of operators. 
%
%%%%%%%%%%
\begin{proposition}\label{prop.spec.general}
Let $\alpha,\delta\in\R$. We have the following. 
\begin{enumerate}[(1)]
\item\label{item1.prop.spec.general}
$\sigma_{{\rm ess}}(-{\mathbb A}_V) = \R_{\le0}$ and 
$\sigma_{{\rm disc}}(-{\mathbb A}_V) \sqcup \rho(-\mathbb{A}_V) = \C\setminus\R_{\le0}$.

\item\label{item2.prop.spec.general}
The same statement with $\mathbb{A}_V$ replaced by $\mathbb{A}_{V,n}$ holds for $n\in\Z$.

\item\label{item3.prop.spec.general}
$\sigma_{{\rm disc}}(-{\mathbb A}_{V}) = \bigcup_{n\in\Z} \sigma_{{\rm disc}}(-{\mathbb A}_{V,n})$ and $\rho(-{\mathbb A}_{V})=\bigcap_{n\in\Z} \rho(-{\mathbb A}_{V,n})$. 
\end{enumerate}
\end{proposition}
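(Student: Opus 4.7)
The plan is to treat $-\mathbb{A}_V = -\mathbb{A} + \mathbb{P} V^\bot \oprot$ as a relatively compact perturbation of the Stokes operator $-\mathbb{A}$ and then invoke the abstract machinery recalled in Subsection \ref{subsec.notation.spec.anal.}. The essential preliminary step is to verify that $\mathbb{B} := \mathbb{P} V^\bot \oprot$ is $\mathbb{A}$-compact, i.e.\ that the map $(D(\mathbb{A}),\|\cdot\|_{D(\mathbb{A})}) \to L^2_\sigma(\Omega)$, $v \mapsto \mathbb{B} v$, is compact. Given a sequence $\{v_k\} \subset D(\mathbb{A})$ bounded in the graph norm, elliptic regularity for the Stokes system yields a uniform bound in $W^{2,2}(\Omega)^2$; using $|V(x)| \le C(|\alpha|+|\delta|)/|x|$ and a smooth cutoff $\chi_R$ supported in $\{|x|<2R\}$, the piece $\chi_R V^\bot \oprot v_k$ is precompact in $L^2$ by Rellich--Kondrachov, while the tail $(1-\chi_R) V^\bot \oprot v_k$ has $L^2$-norm bounded by $C R^{-1} \|\nabla v_k\|_{L^2}$. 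A diagonal extraction, combined with the $L^2$-continuity of $\mathbb{P}$, produces a convergent subsequence of $\{\mathbb{B} v_k\}$.

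Granted this, statement (1) is a direct consequence of Kato's stability theorems \cite[Ch.~IV]{Kato(1976)}: the essential spectrum is invariant under $\mathbb{A}$-compact perturbations, and $\sigma_{{\rm ess}}(-\mathbb{A}) = \sigma(-\mathbb{A}) = \R_{\le 0}$ because $\R_{\le 0}$ has no isolated points; thus $\sigma_{{\rm ess}}(-\mathbb{A}_V) = \R_{\le 0}$ and $\rho_{{\rm sf}}(-\mathbb{A}_V) = \C\setminus\R_{\le 0}$, a connected set. The Fredholm index is constant there and agrees with the index of $\lambda + \mathbb{A}$, which is zero. By the structural principle recorded in Subsection \ref{subsec.notation.spec.anal.}, $\C\setminus\R_{\le 0}$ therefore decomposes as $\rho(-\mathbb{A}_V)$ together with an isolated set of eigenvalues of finite algebraic multiplicity, i.e.\ $\sigma_{{\rm disc}}(-\mathbb{A}_V)$; non-emptiness of $\rho(-\mathbb{A}_V)$ comes for free from sectoriality, already noted in the introduction. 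Statement (2) is obtained by restricting this argument to the invariant subspace $L^2_{\sigma,n}(\Omega)$, using the commutations $\mathcal{P}_n \mathbb{P} = \mathbb{P}\mathcal{P}_n$ and $\mathcal{P}_n V^\bot\oprot = V^\bot\oprot\mathcal{P}_n$ from Subsection \ref{subsec.fourier.series.decom}, and the fact that $-\mathbb{A}_n$ is self-adjoint with $\sigma(-\mathbb{A}_n) = \R_{\le 0}$, which follows from the polar-coordinate separation of the Stokes resolvent in the exterior disk.

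For (3), the orthogonal decomposition $L^2_\sigma(\Omega) = \bigoplus_{n\in\Z} L^2_{\sigma,n}(\Omega)$ together with the invariance of each summand under $\mathbb{A}_V$ exhibits it as a block-diagonal operator with blocks $\mathbb{A}_{V,n}$. The inclusion $\rho(-\mathbb{A}_V) \subseteq \bigcap_n \rho(-\mathbb{A}_{V,n})$ is immediate by restriction, and for the converse I would use (1) and (2): if $\lambda \notin \rho(-\mathbb{A}_V)$ then either $\lambda\in\R_{\le 0} = \sigma_{{\rm ess}}(-\mathbb{A}_{V,n})$ for every $n$, or $\lambda\in\sigma_{{\rm disc}}(-\mathbb{A}_V)$, in which case block-diagonality forces $\lambda$ to be an eigenvalue of some $\mathbb{A}_{V,n}$, necessarily discrete by (2). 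The identity for $\sigma_{{\rm disc}}$ is read off in the same way. The hard part throughout is the $\mathbb{A}$-compactness of $\mathbb{B}$: because $V$ only decays at the borderline two-dimensional rate $|x|^{-1}$ — the exact scaling at which the Hardy inequality fails — no Hardy bound is available, and the decay of $V$ must be exploited quantitatively through the cutoff splitting described above.
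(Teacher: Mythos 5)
Your proposal is correct and its skeleton coincides with the paper's: treat $\mathbb{P}V^\bot\oprot$ as an $\mathbb{A}$-compact perturbation, invoke Kato's stability of the semi-Fredholm property and of the index (so that $\opind(\lambda+\mathbb{A}_V)=\opind(\lambda+\mathbb{A})=0$ on the single component $\C\setminus\R_{\le0}$), and conclude via constancy of nullity and deficiency once one exhibits a point of that component where both vanish. The one genuine divergence is how that point is produced: the paper estimates the numerical range, $2|\langle V^\bot\oprot v,v\rangle|\le(|\alpha|+|\delta|)^2\|v\|_{L^2}^2+\|\nabla v\|_{L^2}^2$, and applies Kato's theorem on the nullity outside the closure of the numerical range, giving $\opdim N(\lambda+\mathbb{A}_V)=0$ on $\{|\Im\lambda|+\Re\lambda>(|\alpha|+|\delta|)^2\}$; you instead invoke the sectoriality of $-\mathbb{A}_V$ recorded in the introduction, whose resolvent set contains a right half-plane and hence meets $\C\setminus\R_{\le0}$. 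Your shortcut is valid and non-circular (sectoriality follows from Lunardi's perturbation result independently of this proposition), but it is purely qualitative, whereas the paper's numerical-range bound \eqref{est1.proof.prop.spec.general} is reused quantitatively later (Proposition \ref{prop.spec.small}, Lemma \ref{lem.apriori.est.}); conversely, you spell out the cutoff/Rellich--Kondrachov proof of relative compactness that the paper only cites from the case $\delta=0$. Two small caveats: your justification of $\sigma(-\mathbb{A}_{n})=\R_{\le0}$ (``polar-coordinate separation of the Stokes resolvent'') is only a pointer --- the inclusion $\R_{\le0}\subset\sigma_{{\rm ess}}(-\mathbb{A}_{n})$ for each fixed mode requires mode-wise singular sequences as in Farwig--Neustupa, though the paper is equally brief here --- and in (3) ``eigenvalue of some $\mathbb{A}_{V,n}$'' should be of $-\mathbb{A}_{V,n}$, the point being that the mode projection of an eigenvector lies in $D(\mathbb{A}_{V,n})$ by the commutation relations you cite, which is exactly the paper's argument as well.
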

%%%%%%%%%%
%
%
%%%%%
\begin{proof}
(1) The fact that $\sigma(-{\mathbb A}) = \R_{\le0}$ is well-known and essentially due to Ladyzhenskaya \cite{Ladyzhenskaya(1969)}. Based on this fact, one can prove that $\sigma_{{\rm ess}}(-{\mathbb A}) = \sigma(-{\mathbb A})$ by showing the non-existence of eigenvalues in a similar manner as in \cite[Lemma 2.6]{Farwig-Neustupa(2007)}, or by using the property of the index $\opind(\lambda+\mathbb{A})$ as is done in \cite[Proof of Proposition 2.12]{Maekawa(2017a)}. Because of the regularity and decay of $V$, the operator $-{\mathbb A}_V + {\mathbb A} = -{\mathbb P} V^\bot \oprot $ is relatively compact with respect to $- {\mathbb A}$. The proof is quite similar to the one in \cite[Section 2.4]{Maekawa(2017a)} for the case $\delta=0$ and thus we omit the details. Hence, from \cite[Chapter $\mathrm{I}\hspace{-1.2pt}\mathrm{V}$, Theorem 5.35]{Kato(1976)}, we see that $-{\mathbb A}_V$ and $-{\mathbb A}$ have the same essential spectrum. This implies the first statement.

For the second statement, we first observe that the equality 
$$
\opind(\lambda + \mathbb{A}_V) 
= \opind(\lambda + \mathbb{A})=0, 
\quad 
\lambda\in \rho_{{\rm sf}}(-\mathbb{A}_V)=\C\setminus\R_{\le0}
$$
holds by \cite[Chapter $\mathrm{I}\hspace{-1.2pt}\mathrm{V}$, Theorems 5.26 and 5.35]{Kato(1976)}. Hence, since $\C\setminus\R_{\le0}$ has only one component, by the argument in Subsection \ref{subsec.notation.spec.anal.}, we only need to prove that $\opdim N(\lambda + \mathbb{A}_V)=0$ for at least one point $\lambda\in \C\setminus\R_{\le0}$. For this purpose, we consider 
$$
\Theta(-\mathbb{A}_{V}) 
= \{\langle -\mathbb{A}_{V} v,v\rangle
~|~\text{$v\in D(\mathbb{A}_{V})$ with $\|v\|_{L^2}=1$}\}, 
$$
which is called the numerical range of $-\mathbb{A}_{V}$; see \cite[Chapter $\mathrm{V}$ \S3 2]{Kato(1976)}.

Let $v\in D(\mathbb{A}_{V})$. From the relation 
$$
\langle -\mathbb{A}_{V} v, v\rangle
= -\|\nabla v\|_{L^2}^2 
- \langle V^\bot \oprot v, v\rangle, 
$$
we have 
\begin{align}\label{est1.proof.prop.spec.general}
|\Im \langle -\mathbb{A}_{V} v, v\rangle| 
+ \Re \langle -\mathbb{A}_{V} v, v\rangle 
\le
-\|\nabla v\|_{L^2}^2 
+ 2|\langle V^\bot \oprot v, v\rangle|. 
\end{align}
Now let $v\in D(\mathbb{A}_{V})$ and $\|v\|_{L^2}=1$. The term $2|\langle V^\bot \oprot v, v\rangle|$ is estimated as 
\begin{align*}
2|\langle V^\bot \oprot v, v\rangle|
&\le 
2 (|\alpha| + |\delta|) \|v\|_{L^2} \|\oprot v\|_{L^2} \\
&=
2 (|\alpha| + |\delta|) \|\nabla v\|_{L^2} \\
&\le
(|\alpha| + |\delta|)^2 + \|\nabla v\|_{L^2}^2. 
\end{align*}
We have used $\|\oprot v\|_{L^2}=\|\nabla v\|_{L^2}$ for $v\in W^{1,2}_0(\Omega)^2\cap L^2_\sigma(\Omega)$ in the second line and the Young inequality in the third line. Hence we obtain 
$$
|\Im \langle -\mathbb{A}_{V} v, v\rangle| 
+ \Re \langle -\mathbb{A}_{V} v, v\rangle 
- (|\alpha| + |\delta|)^2
\le
0, 
$$
which leads to the inclusion 
$$
\overline{\Theta(-{\mathbb A}_{V})} 
\subset 
\{
\lambda\in\C
~|~
|\Im \lambda| + \Re \lambda - (|\alpha| + |\delta|)^2 \le0
\}.  
$$
From \cite[Chapter $\mathrm{V}$, Theorem 3.2]{Kato(1976)}, we know that $\opdim N(\lambda + \mathbb{A}_V)=0$ for any $\lambda$ belonging to the complement of the right-hand side 
$$
\{
\lambda\in\C
~|~
|\Im \lambda| + \Re \lambda - (|\alpha| + |\delta|)^2 >0
\}.  
$$
This set is obviously a subset of $\C\setminus\R_{\le0}$ and thus the second statement follows.

(2) The fact that $\sigma(-\mathbb{A}_{n}) = \R_{\le0}$ can be proved in a similar manner as in \cite[Lemma 3.3]{Farwig-Neustupa(2007)}, and $\sigma_{{\rm ess}}(-{\mathbb A}_{n}) = \sigma(-{\mathbb A}_{n})$ follows by the property of $\opind(\lambda+\mathbb{A}_{n})$. Hence the first statement $\sigma_{{\rm ess}}(-{\mathbb A}_{n}) = \R_{\le0}$ follows from the relative compactness of $-{\mathbb A}_{V,n} + {\mathbb A}_n$ with respect to $-{\mathbb A}_n$. The second statement $\sigma_{{\rm disc}}(-{\mathbb A}_{V,n}) \sqcup \rho(-\mathbb{A}_{V,n}) = \C\setminus\R_{\le0}$ can be deduced from the same discussion as above with $\mathbb{A}_V$ replaced by $\mathbb{A}_{V,n}$.

(3) It suffices to prove the first statement $\sigma_{{\rm disc}}(-{\mathbb A}_{V})=\bigcup_{n\in\Z} \sigma_{{\rm disc}}(-{\mathbb A}_{V,n})$. If $\lambda\in\sigma_{{\rm disc}}(-{\mathbb A}_V)$, there is a nonzero $v\in D({\mathbb A}_V)$ such that $(\lambda+{\mathbb A}_V)v=0$. Choosing $n\in\Z$ such that $v_n=\mathcal{P}_nv\neq0$, we have $v_n\in D({\mathbb A}_{V,n})$ and $(\lambda+{\mathbb A}_{V,n})v_n=0$. Then we see that $\lambda\in \sigma_{{\rm disc}}(-{\mathbb A}_{V,n})$ and hence $\lambda\in\bigcup_{n\in\Z} \sigma_{{\rm disc}}(-{\mathbb A}_{V,n})$. Oppositely, if $\lambda\in\bigcup_{n\in\Z} \sigma_{{\rm disc}}(-{\mathbb A}_{V,n})$, then there are $n\in\Z$ and nonzero $v_n\in D({\mathbb A}_{V,n})$ such that $(\lambda+{\mathbb A}_{V,n})v_n=0$. Then we have $v_n\in D({\mathbb A}_{V})$ and $(\lambda+{\mathbb A}_{V})v_n=0$ and hence $\lambda\in \sigma_{{\rm disc}}(-{\mathbb A}_{V})$. The proof is complete. 
\end{proof}
%%%%%
%

The estimate of the numerical range $\Theta(-\mathbb{A}_{V})$ in the proof of Proposition \ref{prop.spec.general} is quite rough. We consider its refinement in Lemma \ref{lem.est.innerprod.1} to prove Proposition \ref{prop.spec.small} below.

%
%%%%%%%%%%
\begin{lemma}\label{lem.est.innerprod.1}
Let $\alpha,\delta\in\R$. For $v\in W^{1,2}_0(\Omega)^2\cap L^2_\sigma(\Omega)$, we have 
\begin{align}\label{est1.lem.est.innerprod}
|\langle V^\bot \oprot v, v\rangle|
\le 
|\alpha| 
\Big|
\sum_{|n|=1} 
\langle U^\bot \oprot v_n, v_n\rangle
\Big| 
+ |\alpha| \|\nabla v_{\neq}\|_{L^2}^2
+ |\delta| \|\nabla v\|_{L^2}^2. 
\end{align}
Moreover, for any $T>0$, 
\begin{align}\label{est2.lem.est.innerprod}
\Big|\sum_{|n|=1} 
\langle U^\bot \oprot v_n, v_n\rangle
\Big| 
\le 
2 h(T) 
\Big(
\sum_{|n|=1} \|\nabla v_n\|_{L^2}^2 
\Big)
+ \frac{1}{4T^2 h(T)} 
\Big(
\sum_{|n|=1} \|v_n\|_{L^2}^2 
\Big). 
\end{align}
Here the function $h=h(T)$ is defined by 
$$
h(T) 
= \int_0^T \frac{1}{\tau} e^{-\frac{1}{\tau}} \dd \tau, \quad T>0, 
\label{ThetaT}
$$
which satisfies 
\begin{align}\label{est3.lem.est.innerprod}
e^{-1}\log T
\le h(T) 
\le \log T, \quad T>e. 
\end{align}
\end{lemma}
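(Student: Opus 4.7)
My plan is to decompose $V^\bot = \alpha U^\bot - \delta W^\bot$ and handle the two pieces separately. The $W$ geometry admits a clean one-dimensional Hardy-type identity that lets me control $\langle W^\bot\oprot v, v\rangle$ by $\|\nabla v\|_{L^2}^2$ uniformly over all Fourier modes, including the critical $|n|=1$ modes where the plain Hardy inequality fails. The $U$ geometry does not enjoy such an identity, and the $|n|=1$ contribution from the $\alpha U^\bot$ term must be retained as the first term of \eqref{est1.lem.est.innerprod}.

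For the $\delta W^\bot$ piece I would apply \eqref{eq.bilinear.rot} to $(v,W)$ together with $\oprot W=0$, $\opdiv W=0$, and $v|_{\partial\Omega}=0$ to rewrite the inner product as $\langle v\cdot\nabla W, v\rangle$; a short polar-coordinate computation reduces this to $\int_\Omega(v_\theta^2 - v_r^2)/r^2\,\dd x$, and Fourier decomposition gives, mode by mode, the radial integral $I_n = \int_1^\infty(|v_{\theta,n}|^2 - |v_{r,n}|^2)/r\,\dd r$. For $|n|\neq 1$, the per-mode inequality $(|n|-1)^2\|v_n/r\|_{L^2}^2\leq \|\nabla v_n\|_{L^2}^2$ (the pointwise bound preceding \eqref{Hardy.ineq.nonzero}) gives $|I_n|\leq \|\nabla v_n\|_{L^2}^2/(2\pi)$. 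For the critical modes $|n|=1$, substituting $v_{\theta,n}=(i/n)\partial_r(rv_{r,n})$ from the divergence-free condition yields the pointwise identity
\[
\frac{|v_{\theta,n}|^2 - |v_{r,n}|^2}{r} = r|\partial_r v_{r,n}|^2 + \partial_r|v_{r,n}|^2,
\]
whose integral over $(1,\infty)$ collapses, by no-slip at $r=1$ and decay at infinity, to $\int_1^\infty r|\partial_r v_{r,n}|^2\,\dd r\leq \|\nabla v_n\|_{L^2}^2/(2\pi)$. Summing over $n$ yields $|\langle W^\bot\oprot v, v\rangle|\leq \|\nabla v\|_{L^2}^2$. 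For the $\alpha U^\bot$ piece, the rotational invariance of $U^\bot$ gives the orthogonal splitting $\langle U^\bot\oprot v, v\rangle=\sum_n\langle U^\bot\oprot v_n, v_n\rangle$; on the non-resonant modes I use $|\langle U^\bot\oprot v_\neq, v_\neq\rangle|\leq \|v_\neq/r\|_{L^2}\|\oprot v_\neq\|_{L^2}\leq \|\nabla v_\neq\|_{L^2}^2$ from \eqref{Hardy.ineq.nonzero} and the identity $\|\oprot v_\neq\|_{L^2}=\|\nabla v_\neq\|_{L^2}$ for divergence-free fields vanishing on $\partial\Omega$.

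For \eqref{est2.lem.est.innerprod} the task is a logarithmic Hardy-type bound for the surviving $|n|=1$ contribution. I would reduce it in polar to a radial integral of a bilinear expression in $v_{r,n}, v_{\theta,n}$ weighted by $1/r$, then split the integration into $(1,T)$ and $(T,\infty)$. On the outer region, the pointwise bound $|U^\bot|\leq 1/T$ together with Cauchy--Schwarz and $\|v_n/r\|_{L^2(r\ge T)}\leq \|v_n\|_{L^2}/T$ controls the contribution by $\|\nabla v_n\|_{L^2}\|v_n\|_{L^2}/T$, which Young's inequality with parameter tuned to $h(T)$ splits into an $h(T)\|\nabla v_n\|_{L^2}^2$ piece and the required $\|v_n\|_{L^2}^2/(4T^2 h(T))$ piece. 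On the inner region, I would introduce the weight $w(r)=(1/r)e^{-1/r}$ --- whose primitive on $(1,r)$ is essentially $h(r)$ --- integrate by parts against this primitive, and use the pointwise bound $e^{-1/r}\ge e^{-1}$ on $(1,\infty)$ to dominate the natural weight $1/r$ by $e\cdot w(r)$; this contributes the remaining $h(T)\|\nabla v_n\|_{L^2}^2$ piece. The shape of $w$ is chosen precisely so that its vanishing at $r=1$ kills the boundary term from the integration by parts while its slow unit limit at infinity generates the logarithm in $h(T)$. The bound \eqref{est3.lem.est.innerprod} is then elementary: the substitution $u=1/\tau$ rewrites $h(T)=\int_{1/T}^\infty u^{-1}e^{-u}\,\dd u$; the lower bound $h(T)\ge e^{-1}\log T$ follows from $e^{-u}\ge e^{-1}$ on $(0,1)$, while the upper bound $h(T)\le\log T$ for $T>e$ uses the convexity inequality $(1-e^{-u})/u\ge e^{-u}$ to absorb the tail $\int_1^\infty u^{-1}e^{-u}\,\dd u$ into the gap between $\int_{1/T}^1 u^{-1}\,\dd u=\log T$ and $\int_{1/T}^1 u^{-1}e^{-u}\,\dd u$.

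The main obstacle will be aligning the weight $(1/r)e^{-1/r}$ with the split parameter $T$ so that the numerical constants $2$ and $1/4$ in \eqref{est2.lem.est.innerprod} come out exactly; any logarithmic loss at this stage would propagate into and degrade the subsequent asymptotic analysis of $F_n$ for $|n|=1$ carried out in Section \ref{sec.quant.anal.disc.spec}.
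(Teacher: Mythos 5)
Your treatment of \eqref{est1.lem.est.innerprod} is correct and, for the $\delta W$ piece, genuinely different from the paper: instead of invoking Russo's Hardy-type inequality \eqref{Hardytype.ineq.W} through \eqref{eq.bilinear.rot}, you reduce $\langle W^\bot \oprot v,v\rangle$ to $\int_\Omega (v_\theta^2-v_r^2)|x|^{-2}\dd x$ and handle the critical modes via the identity $r^{-1}(|v_{\theta,n}|^2-|v_{r,n}|^2)=r|\partial_r v_{r,n}|^2+\partial_r|v_{r,n}|^2$ (valid for $|n|=1$ by the divergence-free relation), whose integral telescopes by no-slip. This is a correct, self-contained derivation that even yields the constant $1$ used in the paper, at the price of being specific to the exterior disk; the $\alpha U$ part and the orthogonal splitting coincide with the paper's argument. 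The elementary bounds \eqref{est3.lem.est.innerprod} are also essentially fine (for the upper bound your crude estimate of the tail $\int_1^\infty u^{-1}e^{-u}\dd u$ by $e^{-1}$ is not quite enough on all of $T>e$, but a marginally sharper tail bound closes it).

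The genuine gap is in \eqref{est2.lem.est.innerprod}, precisely at the step the paper outsources to the proof of \cite[Lemma 3.26]{Maekawa(2017a)}, namely the weighted estimate
\begin{align*}
\int_\Omega \frac{1}{r}\,|\oprot v_n|\,|v_{r,n}|\dd x
\le h(T)\,\|\nabla v_n\|_{L^2}^2+\frac{1}{T}\,\|v_n\|_{L^2}\|\nabla v_n\|_{L^2},
\end{align*}
after which only the Young step remains. Your outer-region bound $(r>T)$ and the Young split are fine, but the inner-region sketch does not establish the bound $h(T)\|\nabla v_n\|_{L^2}^2$: (a) the weight $w(r)=r^{-1}e^{-1/r}$ does not vanish at $r=1$ (only its primitive $\int_1^r w$ does), and once you integrate by parts against that primitive you pick up an uncontrolled boundary term at $r=T$, which you never address; (b) the domination $r^{-1}\le e\,w(r)$ costs a factor $e$, and your primitive equals $h(r)-h(1)$ rather than $h(r)$, so even if the scheme were completed it would produce $C\,h(T)$ with $C>1$ rather than the factor $1$ needed to arrive at the stated coefficient $2h(T)$; (c) no mechanism is indicated for converting the bilinear integrand $|\oprot v_n||v_{r,n}|$ into the quadratic gradient quantity — one needs something like $|v_{r,n}(r)|\le\int_1^r|\partial_s v_{r,n}|\dd s$ plus a weighted Cauchy--Schwarz, and the natural weight suggested by the identity $h(T)=\int_1^\infty s^{-1}e^{-s/T}\dd s$ is $s^{-1}e^{-s/T}$ on all of $(1,\infty)$, not $s^{-1}e^{-1/s}$ on $(1,T)$. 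To be fair, the exact constants $2$ and $1/4$ are not sacred for the downstream application (Proposition \ref{prop.spec.small} would survive any bound of the form $C_1h(T)\|\nabla v_n\|^2+C_2T^{-2}h(T)^{-1}\|v_n\|^2$ with adjusted $T$), but your sketch as written does not prove even such a weaker version, because the inner-region estimate — the heart of \eqref{est2.lem.est.innerprod} — is left incomplete.
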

%%%%%%%%%%
%
%
%%%%%
\begin{proof}
By the definition of $V$, we see that 
$$
\langle V^\bot \oprot v, v\rangle 
= 
\alpha \langle U^\bot \oprot v, v\rangle 
- \delta \langle W^\bot \oprot v, v\rangle. 
$$
The Fourier series expansion leads to 
$$
\langle U^\bot \oprot v, v\rangle
= 
\sum_{|n|=1} 
\langle U^\bot \oprot v_n, v_n\rangle
+ \langle U^\bot \oprot v_{\neq}, v_{\neq}\rangle. 
$$
We have
\begin{align*}
|\langle U^\bot \oprot v_{\neq}, v_{\neq}\rangle|
&\le 
\Big\|
(r,\theta) 
\mapsto 
\frac{v_{\neq}(r,\theta)}{r}
\Big\|_{L^2} 
\|\oprot v_{\neq}\|_{L^2} \\
&\le 
\|\nabla v_{\neq}\|_{L^2}^2, 
\end{align*}
where the Hardy-type inequality \eqref{Hardy.ineq.nonzero} and $\|\oprot u\|_{L^2}=\|\nabla u\|_{L^2}$ for $u\in W^{1,2}_0(\Omega)^2\cap L^2_\sigma(\Omega)$ are applied. Also, from \eqref{eq.bilinear.rot} and \eqref{Hardytype.ineq.W} in the introduction, 
$$
|\langle W^\bot \oprot v, v\rangle|
\le \|\nabla v\|_{L^2}^2. 
$$
Combining all the estimates so far, we obtain \eqref{est1.lem.est.innerprod}.

Next let $|n|=1$. We compute 
$$
|\langle U^\bot \oprot v_n, v_n\rangle|
\le 
\int_{0}^{2\pi} \int_{1}^{\infty}
\frac1r |(\oprot v_n)_n(r)| |v_{r,n}(r)| r\dd r \dd \theta. 
$$
As is shown in \cite[Proof of Lemma 3.26]{Maekawa(2017a)}, we have 
\begin{align*}
&\int_{0}^{2\pi} \int_{1}^{\infty}
\frac1r |(\oprot v_n)_n(r)| |v_{r,n}(r)| r\dd r \dd \theta \\
&\le
h(T) \|\nabla v_n\|_{L^2}^2 
+ \frac{1}{T} \|v_n\|_{L^2}\ \|\nabla v_n\|_{L^2}, 
\quad T>0. 
\end{align*}
The Young inequality yields
$$
\frac{1}{T} 
\|v_n\|_{L^2}\ \|\nabla v_n\|_{L^2} 
\le
h(T) \|\nabla v_n\|_{L^2}^2 + \frac{1}{4T^2 h(T)} \|v_n\|_{L^2}^2. 
$$
These estimates imply \eqref{est2.lem.est.innerprod} after summation. The proof is complete.
\end{proof}
%%%%%
%

%
%%%%%%%%%%
\begin{proposition}\label{prop.spec.small}
Let $\alpha,\delta\in\R$ be sufficiently small. We have the following. 
\begin{enumerate}[(1)]
\item\label{item1.prop.spec.small}
The set 
$$
\Sigma_{\frac34 \pi} + 4e\alpha^2 e^{-\frac{1}{4|\alpha|}}
= 
\Big\{\lambda\in\C~\Big|~|\Im \lambda| + \Re \lambda - 4e\alpha^2 e^{-\frac{1}{4|\alpha|}} > 0 \Big\}
$$
is contained in $\rho(-\mathbb{A}_{V})$.

\item\label{item2.prop.spec.small} 
The same statement with $\mathbb{A}_V$ replaced by $\mathbb{A}_{V,n}$ holds for each $|n|=1$.

\item\label{item3.prop.spec.small}
The set $\Sigma_{\frac{3}{4}\pi}$ is contained in $\rho(-\mathbb{A}_{V,n})$ for each $|n|\neq1$. 
\end{enumerate}
\end{proposition}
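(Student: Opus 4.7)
The plan is to prove parts (\ref{item3.prop.spec.small}) and (\ref{item2.prop.spec.small}) first by bounding the numerical range $\Theta(-\mathbb{A}_{V,n})$ via Lemma \ref{lem.est.innerprod.1}, and then to deduce (\ref{item1.prop.spec.small}) from Proposition \ref{prop.spec.general}~(\ref{item3.prop.spec.general}). The common starting point for the first two parts is the identity
\begin{align*}
\langle -\mathbb{A}_{V,n} v, v\rangle
= -\|\nabla v\|_{L^2}^2 - \langle V^\bot \oprot v, v\rangle,
\quad v \in D(\mathbb{A}_{V,n}),
\end{align*}
from which one obtains
\begin{align*}
|\Im \langle -\mathbb{A}_{V,n} v, v\rangle| + \Re \langle -\mathbb{A}_{V,n} v, v\rangle
\le -\|\nabla v\|_{L^2}^2 + 2|\langle V^\bot \oprot v, v\rangle|.
\end{align*}
Once the right-hand side is shown to be bounded by some $\kappa \ge 0$ for every $v \in D(\mathbb{A}_{V,n})$ with $\|v\|_{L^2}=1$, the closure $\overline{\Theta(-\mathbb{A}_{V,n})}$ will lie in $\{\lambda \in \C \,:\, |\Im\lambda|+\Re\lambda \le \kappa\}$, whose complement is precisely $\Sigma_{\frac{3}{4}\pi} + \kappa$. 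Then \cite[Chapter V, Theorem 3.2]{Kato(1976)} forces $\opdim N(\lambda + \mathbb{A}_{V,n}) = 0$ for any $\lambda$ in that complement, and since the complement is contained in $\C \setminus \R_{\le 0}$, Proposition \ref{prop.spec.general}~(\ref{item2.prop.spec.general}) will place such $\lambda$ in $\rho(-\mathbb{A}_{V,n})$.

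For part (\ref{item3.prop.spec.small}), any $v \in D(\mathbb{A}_{V,n})$ with $|n|\neq1$ satisfies $v_{\neq}=v$ and has an empty $|n|=1$ Fourier sum, so Lemma \ref{lem.est.innerprod.1} directly yields $|\langle V^\bot \oprot v, v\rangle| \le (|\alpha|+|\delta|) \|\nabla v\|_{L^2}^2$. For $|\alpha|+|\delta|\le 1/2$ the displayed estimate becomes $|\Im\langle -\mathbb{A}_{V,n}v,v\rangle|+\Re\langle -\mathbb{A}_{V,n}v,v\rangle\le 0$, so $\kappa=0$ works.

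For part (\ref{item2.prop.spec.small}), with $|n|=1$ the Hardy-type inequality \eqref{Hardy.ineq.nonzero} is unavailable and I must combine \eqref{est1.lem.est.innerprod} with the logarithmic bound \eqref{est2.lem.est.innerprod}: since $v_{\neq}=0$ and only one Fourier mode contributes, for any $T > e$ one gets
\begin{align*}
|\langle V^\bot \oprot v, v\rangle|
\le \bigl(2|\alpha| h(T) + |\delta|\bigr) \|\nabla v\|_{L^2}^2
+ \frac{|\alpha|}{4T^2 h(T)} \|v\|_{L^2}^2.
\end{align*}
The key step is to choose $T = e^{1/(8|\alpha|)}$. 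The upper bound $h(T) \le \log T = 1/(8|\alpha|)$ from \eqref{est3.lem.est.innerprod} controls the gradient coefficient by $4|\alpha|h(T)+2|\delta| \le 1/2 + 2|\delta| \le 1$ once $|\delta| \le 1/4$, whereas the reverse bound $h(T) \ge e^{-1}\log T = 1/(8e|\alpha|)$ bounds the remaining term by
\begin{align*}
\frac{|\alpha|}{2T^2 h(T)}
\le \frac{|\alpha|\cdot 8e|\alpha|}{2\, e^{1/(4|\alpha|)}}
= 4e\alpha^2 e^{-\frac{1}{4|\alpha|}}.
\end{align*}
Thus $\kappa = 4e\alpha^2 e^{-\frac{1}{4|\alpha|}}$ suffices and the inclusion follows.

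Finally, part (\ref{item1.prop.spec.small}) is immediate from Proposition \ref{prop.spec.general}~(\ref{item3.prop.spec.general}): the shifted sector $\Sigma_{\frac{3}{4}\pi} + 4e\alpha^2 e^{-\frac{1}{4|\alpha|}}$ is contained in $\Sigma_{\frac{3}{4}\pi} \subset \rho(-\mathbb{A}_{V,n})$ for every $|n|\neq1$ by (\ref{item3.prop.spec.small}) and in $\rho(-\mathbb{A}_{V,n})$ for $|n|=1$ by (\ref{item2.prop.spec.small}), so it lies in $\bigcap_{n\in\Z}\rho(-\mathbb{A}_{V,n})=\rho(-\mathbb{A}_V)$. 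The delicate point of the whole argument is the $|n|=1$ mode: the failure of the two-dimensional Hardy inequality forces the logarithmic correction carried by $h(T)$, and the exponentially small but genuinely nonzero shift $4e\alpha^2 e^{-\frac{1}{4|\alpha|}}$ is the inevitable price that this coarse numerical-range method pays near the negative real axis. Removing this shift to cover the full sector $\Sigma_{\frac{3}{4}\pi}$ is exactly what the more delicate quantitative analysis of the functions $F_n$ with $|n|=1$ in Section \ref{sec.quant.anal.disc.spec} is designed to accomplish.
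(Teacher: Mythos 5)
Your proof is correct and follows essentially the same route as the paper: the numerical-range bound built from Lemma \ref{lem.est.innerprod.1} with the choice $T=e^{\frac{1}{8|\alpha|}}$, Kato's numerical-range theorem, and the Fredholm/discrete-spectrum structure of Proposition \ref{prop.spec.general}. The only difference is organizational — the paper bounds the numerical range of the full operator $-\mathbb{A}_V$ directly for part (\ref{item1.prop.spec.small}), whereas you prove the modewise statements first and recover (\ref{item1.prop.spec.small}) through $\rho(-\mathbb{A}_V)=\bigcap_{n\in\Z}\rho(-\mathbb{A}_{V,n})$, which is an equally valid rearrangement of the same argument.
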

%%%%%%%%%%
%
%
%%%%%
\begin{proof}
Let us consider the numerical range as in the proof of Proposition \ref{prop.spec.general}.

(1) We first estimate $2|\langle V^\bot \oprot v, v\rangle|$. Let $v\in D(\mathbb{A}_{V})$. Using Lemma \ref{lem.est.innerprod.1}, we have 
\begin{align*}
2|\langle V^\bot \oprot v, v\rangle|
&\le 
2|\alpha| 
\bigg(
2 h(T) \|\nabla v\|_{L^2}^2 
+ \frac{1}{4T^2 h(T)} \| v\|_{L^2}^2
\bigg) \\
&\quad 
+ 2(|\alpha|+|\delta|) \|\nabla v\|_{L^2}^2 \\
&\le 
2(2|\alpha| h(T) + |\alpha| + |\delta|) 
\|\nabla v\|_{L^2}^2
+ \frac{|\alpha|}{2T^2 h(T)} 
\| v\|_{L^2}^2. 
\end{align*}
Let us choose $T = e^{\frac1{8|\alpha|}}$. From \eqref{est3.lem.est.innerprod}, we see that 
$$
2(2|\alpha| h(T) + |\alpha| + |\delta|)
\le 
\frac12 + 2(|\alpha|+|\delta|) 
$$
and that 
$$
\frac{|\alpha|}{2T^2 h(T)}
\le \frac{e |\alpha|}{2T^2 \log T}
= 4e\alpha^2 e^{-\frac{1}{4|\alpha|}}. 
$$
Hence we obtain 
\begin{align}\label{est1.proof.prop.spec.small}
2|\langle V^\bot \oprot v, v\rangle|
\le 
\Big\{
\frac12 + 2(|\alpha|+|\delta|)
\Big\}
\|\nabla v\|_{L^2}^2
+ 4e\alpha^2 e^{-\frac{1}{4|\alpha|}} 
\| v\|_{L^2}^2. 
\end{align}
Note that up to this point the smallness of  $\alpha,\delta$ is not needed.

Now let $v\in D(\mathbb{A}_{V})$ and $\|v\|_{L^2}=1$. From \eqref{est1.proof.prop.spec.general} and \eqref{est1.proof.prop.spec.small}, we have 
\begin{align*}
\begin{split}
&|\Im \langle -\mathbb{A}_{V} v, v\rangle|
+ \Re \langle -\mathbb{A}_{V} v, v\rangle 
- 4e\alpha^2 e^{-\frac{1}{4|\alpha|}} \\
&\le
\Big\{-\frac12 + 2(|\alpha|+|\delta|) \Big\}
\|\nabla v\|_{L^2}^2. 
\end{split}
\end{align*}
Therefore, for sufficiently small $\alpha,\delta$, we obtain the inclusion 
$$
\overline{\Theta(-{\mathbb A}_{V})} 
\subset 
\{\lambda\in\C
~|~
|\Im \lambda| + \Re \lambda - 4e\alpha^2 e^{-\frac{1}{4|\alpha|}}\le0
\}.  
$$
Then the statement follows from the same argument as in the proof of Proposition \ref{prop.spec.general} (\ref{item2.prop.spec.general}).

(2) A similar proof as above leads to the statement.

(3) Let $v\in D(\mathbb{A}_{V,n})$ with $\|v\|_{L^2}=1$. Using Lemma \ref{lem.est.innerprod.1}, we estimate 
\begin{align*}
|\Im \langle -\mathbb{A}_{V,n} v, v\rangle| 
+ \Re \langle -\mathbb{A}_{V,n} v, v\rangle 
&\le
-\|\nabla v\|_{L^2}^2 
+ 2|\langle V^\bot \oprot v, v\rangle| \\
&\le
\{-1 + 2(|\alpha|+|\delta|)\} 
\|\nabla v\|_{L^2}^2. 
\end{align*}
Thus the statement follows. The proof of Proposition \ref{prop.spec.small} is complete. 
\end{proof}
%%%%%
%

%%%%%%%%%%%%%%%%%%%%
%%%%%%%%%%%%%%%%%%%%
\subsection{Analysis by explicit computation}\label{subsec.anal.expct.compt}
%%%%%%%%%%%%%%%%%%%%
%%%%%%%%%%%%%%%%%%%%

Proposition \ref{prop.spec.small} does not provide information on the discrete spectrum of $-{\mathbb A}_{V}$ near the origin. This is a consequence of the fact that the Hardy inequality fails to hold in two-dimensional exterior domains. Therefore, we investigate the homogeneous equation of \eqref{eq.resol.n} by a more explicit computation, exploiting the symmetry of the exterior disk $\Omega$.

For $|n|\ge1$, we define 
\begin{align}\label{def.xi}
\xi_n 
= 
\xi_n(\alpha,\delta)
= 
\bigg[
\Big\{n^2 + \Big(\frac{\delta}{2}\Big)^2\Big\}^\frac12
+ i\alpha n \bigg]^\frac12 
\end{align}
and
\begin{align}\label{def.Fn}
F_n(\sqrt{\lambda}) 
= F_n(\sqrt{\lambda};\alpha,\delta) 
= \int_1^\infty 
s^{-|n|+1-\frac{\delta}2} 
K_{\xi_n}(\sqrt{\lambda} s) \dd s, 
\quad 
\lambda\in\C\setminus\R_{\le0}. 
\end{align}
%

%
%%%%%%%%%%
\begin{proposition}\label{prop.charact.disc.ev}
Let $\alpha,\delta\in\R$. We have the following. 
\begin{enumerate}[(1)]
\item\label{item1.prop.charact.disc.ev}
$\sigma_{{\rm disc}}(-{\mathbb A}_{V,0}) = \emptyset$.

\item\label{item2.prop.charact.disc.ev}
$\sigma_{{\rm disc}}(-{\mathbb A}_{V,n}) 
= \{\lambda\in\C\setminus\R_{\le0}~|~F_n(\sqrt{\lambda})=0\}$ for $|n|\ge1$. 
\end{enumerate}
\end{proposition}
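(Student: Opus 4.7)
The strategy for both parts is to reduce the homogeneous equation $(\lambda + \mathbb{A}_{V,n})v_n = 0$ for $\lambda \in \C\setminus\R_{\le 0}$ to a scalar ODE on $(1,\infty)$ solvable via modified Bessel functions. By Proposition \ref{prop.spec.general} (\ref{item2.prop.spec.general}), $\lambda \in \C\setminus\R_{\le 0}$ belongs to $\sigma_{\rm disc}(-\mathbb{A}_{V,n})$ precisely when this homogeneous equation admits a nontrivial solution in $D(\mathbb{A}_{V,n})$; thus in each case the task is to determine when such a solution exists.

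For part (\ref{item2.prop.charact.disc.ev}), $|n|\ge 1$: first pass to the vorticity equation \eqref{eq.polar.vor} with zero right-hand side for $\omega_n = (\oprot v_n)_n$. The substitution $\omega_n(r) = r^{-\delta/2}\tilde\omega_n(r)$ transforms this into the standard modified Bessel equation whose characteristic order is $\xi_n$, as defined in \eqref{def.xi}. Since $\Re\sqrt\lambda>0$ and elliptic regularity yields $\omega_n \in L^2((1,\infty); r\dd r)$, the exponentially growing branch $I_{\xi_n}$ is excluded, so $\omega_n(r) = C r^{-\delta/2}K_{\xi_n}(\sqrt\lambda r)$ for some constant $C$. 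By Proposition \ref{prop.biot-savart}, $v_n = \mathcal{V}_n[\omega_n]$ with $d_n[\omega_n] = 0$; the condition $v_{r,n}(1)=0$ is automatic from $\psi_n(1)=0$, while $v_{\theta,n}(1) = 0$ is equivalent (as seen from direct evaluation of $\psi_n'(1)$ using \eqref{def.psi_n}) to $d_n[\omega_n] = 0$. Computing explicitly,
\[
d_n[\omega_n] = C\int_1^\infty s^{-|n|+1-\delta/2}K_{\xi_n}(\sqrt\lambda s)\dd s = C F_n(\sqrt\lambda),
\]
so a nontrivial eigenfunction ($C\ne 0$) exists if and only if $F_n(\sqrt\lambda) = 0$. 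Conversely, if $F_n(\sqrt\lambda) = 0$, defining $\omega_n(r) = r^{-\delta/2}K_{\xi_n}(\sqrt\lambda r)$ and $v_n = \mathcal{V}_n[\omega_n]$ produces a nontrivial element of $D(\mathbb{A}_{V,n})$ solving the homogeneous equation, via the standard asymptotics of $K_{\xi_n}$ at $r\to 1^+$ and $r\to\infty$.

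For part (\ref{item1.prop.charact.disc.ev}), $n = 0$: axisymmetry combined with $\opdiv v_0 = 0$ and $v_{r,0}(1)=0$ forces $v_{r,0}\equiv 0$, so $v_0 = v_{\theta,0}(r){\bf e}_\theta$ with $v_{\theta,0}(1) = 0$. The radial component of $V^\bot \oprot v_0$ is a pure gradient (killed by $\mathbb{P}$), and the surviving tangential contribution together with $-\Delta v_0$, after using \eqref{formulas.polar2}, gives the scalar ODE
\[
\lambda v_{\theta,0} - v_{\theta,0}'' - \tfrac{1+\delta}{r}v_{\theta,0}' + \tfrac{1-\delta}{r^2}v_{\theta,0} = 0,\qquad v_{\theta,0}(1) = 0.
\]
The substitution $v_{\theta,0}(r) = r^{-\delta/2}\psi(r)$ turns this into the modified Bessel equation of real order $1 - \delta/2$ for $\psi$; the decaying solution is $\psi(r) = cK_{1-\delta/2}(\sqrt\lambda r)$, and the boundary condition becomes $c K_{1-\delta/2}(\sqrt\lambda) = 0$. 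Since $K_\nu(z)$ is classically known to have no zeros in $\{\Re z > 0\}$ when $\nu\in\R$, and $\Re\sqrt\lambda > 0$ for $\lambda\in\C\setminus\R_{\le 0}$, this forces $c = 0$, hence $v_0 = 0$.

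The main obstacle I anticipate is in the reverse direction of part (\ref{item2.prop.charact.disc.ev}): one must verify that the constructed $v_n = \mathcal{V}_n[r^{-\delta/2}K_{\xi_n}(\sqrt\lambda\,\cdot)]$ actually lies in $D(\mathbb{A}_{V,n}) \subset W^{2,2}(\Omega)^2$. This demands careful control of $K_{\xi_n}(\sqrt\lambda r)$ as $r\to 1^+$ (for boundary regularity of $\mathcal{V}_n[\omega_n]$) and its exponential decay as $r\to\infty$ (so that the Biot-Savart integrals in \eqref{def.psi_n} converge with their polynomial weights); standard modified Bessel asymptotics suffice, but the complex dependence on $\xi_n = \xi_n(\alpha,\delta)$ must be tracked carefully since $\xi_n$ need not be real.
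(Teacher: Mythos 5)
Your proposal is correct and follows essentially the same route as the paper: reduction via Proposition \ref{prop.spec.general} (\ref{item2.prop.spec.general}) to the existence of a nontrivial solution of the homogeneous problem, passage to the vorticity equation \eqref{eq.polar.vor} solved by $r^{-\delta/2}K_{\xi_n}(\sqrt{\lambda}r)$ after discarding the growing branch, recovery of $v_n$ by the Biot--Savart law with the constraint $d_n[\omega_n]=0$ yielding $F_n(\sqrt{\lambda})=0$, and for $n=0$ the explicit ODE for $v_{\theta,0}$ together with the nonvanishing of $K_{|1-\delta/2|}$ in the right half-plane. The only step you leave implicit in the converse direction is the one the paper settles with Proposition \ref{prop.rot.zero}: having built $v_n=\mathcal{V}_n[\omega_n]$, one checks that $\oprot(\lambda v_n-\Delta v_n+V^\bot\oprot v_n)=0$, so this expression is a gradient annihilated by $\mathbb{P}$, which is what actually gives $(\lambda+\mathbb{A}_{V,n})v_n=0$.
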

%%%%%%%%%%
%
%
%%%%%
\begin{proof}
(1) Let $\lambda\in\C\setminus\R_{\le0}$. In view of Proposition \ref{prop.spec.general} (\ref{item2.prop.spec.general}) and $\opind(\lambda+\mathbb{A}_{V,0})=0$, we will show that the equation $(\lambda + {\mathbb A}_{V,0}) v_0 = 0$ has only the trivial solution in $D({\mathbb A}_{V,0})$. Put $n=0$ in \eqref{eq.polar.vr}--\eqref{eq.polar.div-free.no-slip} with $f_0=0$. The conditions in \eqref{eq.polar.div-free.no-slip} imply that $v_{r,0}(r)=0$ and hence that $v_{0}=v_{\theta,0}(r) {\bf e}_\theta$. 
%Thus we focus on $v_{\theta,0}(r)$. 
From \eqref{eq.polar.vtheta}--\eqref{eq.polar.div-free.no-slip}, we see that $v_{\theta,0}(r)$ satisfies 
$$
-\frac{\dd^2 v_{\theta,0}}{\dd r^2} 
-\frac{1+\delta}r \frac{\dd v_{\theta,0}}{\dd r}  
+\Big(
\lambda + \frac{1-\delta}{r^2} 
\Big) v_{\theta,0}
= 0, \quad r>1, \qquad v_{\theta,0}(1)=0.
$$
By summability, the solution is given by, with some constant $c_0$, 
$$
v_{\theta,0}(r) = c_0 r^{-\frac{\delta}2} K_{|1-\frac{\delta}2|}(\sqrt{\lambda} r). 
$$
Then the boundary condition leads to $c_0=0$ since $K_{\nu}(\cdot)$ has no zeros in $\Sigma_{\frac{\pi}{2}}$ if $\nu\ge0$; see \cite[Chapter $\mathrm{X}\hspace{-1.2pt}\mathrm{V}$ 15$\cdot$7]{Watson(1944)}. Hence we obtain that $v_0=v_{\theta,0}(r) {\bf e}_\theta = 0$, which is to be shown.

(2) Let $\lambda\in\C\setminus\R_{\le0}$. In view of Proposition \ref{prop.spec.general} (\ref{item2.prop.spec.general}) and $\opind(\lambda+\mathbb{A}_{V,n})=0$, we will show that the equation $(\lambda + {\mathbb A}_{V,n}) v_n = 0$ admits a nontrivial solution in $D({\mathbb A}_{V,n})$ if and only if $F_n(\sqrt{\lambda})=0$. Let $v_n\in D({\mathbb A}_{V,n})$ be nontrivial and solve $(\lambda + {\mathbb A}_{V,n}) v_n = 0$. Notice that $v_n$ is smooth by the elliptic regularity of the Stokes system. Setting $\omega_n(r)=(\oprot v_n)_n(r)$, we see that $\omega_n$ satisfies the homogeneous equation of \eqref{eq.polar.vor}. Its linearly independent solutions are \eqref{sols.eta.K.I} in Appendix \ref{app.hom.eq.vor}. By the summability of $v_n$, we must have, with some constant $c_n$, 
$$
\omega_n(r) 
= c_n r^{-\frac{\delta}2} K_{\zeta_n}(\sqrt{\lambda} r). 
$$
Since $\omega_n(r)$ decays exponentially as $r\to\infty$, Proposition \ref{prop.biot-savart} leads to that 
\begin{align*}
\begin{split}
&v_n 
= \mathcal{V}_n[\omega_n] 
= c_n \mathcal{V}_n\big[r\mapsto r^{-\frac{\delta}2} K_{\xi_n}(\sqrt{\lambda} r)\big] \\
&
\text{and}
\qquad
d_n[\omega_n]
= c_n d_n\big[r\mapsto r^{-\frac{\delta}2} K_{\xi_n}(\sqrt{\lambda} r)\big]
= 0,
\end{split}
\end{align*}
with the notations in \eqref{def.psi_n}--\eqref{def.Biot-Savart}. The former condition implies that $c_n$ is nonzero since $v_n$ is assumed to be nontrivial. The latter one can be written equivalently to 
$$
c_n F_n(\sqrt{\lambda})=0. 
$$
Thus we have that $F_n(\sqrt{\lambda})=0$ since $c_n\neq0$. This completes the proof of the only if part.

For the if part, let $F_n(\sqrt{\lambda})=0$. Then, for any nonzero $c_n$, the vector field 
$$
v_n 
= c_n \mathcal{V}_n\big[r\mapsto r^{-\frac{\delta}2} K_{\xi_n}(\sqrt{\lambda} r)\big] 
$$ 
gives a nontrivial solution of $(\lambda + {\mathbb A}_{V,n}) v_n = 0$. Indeed, from the proof of the only if part, we ensure that $v_n$ is smooth and belongs to $D({\mathbb A}_{V,n})$. Note that the no-slip condition $v_n{|_{\partial\Omega}}=0$ is verified by the assumption that $F_n(\sqrt{\lambda})=0$. Moreover, setting 
$$
f_n=\lambda v_n - \Delta v_n + V^\bot \oprot v_n,
$$
from \eqref{properties.Biot-Savart}, we see that 
$$
\oprot f_n
=
\lambda (\oprot v_n) 
- \Delta (\oprot v_n)
+ V\cdot\nabla(\oprot v_n)
= 0. 
$$
Thus Proposition \ref{prop.rot.zero} yields that there is a function $p\in \widehat{W}^{1,2}(\Omega)$ such that $f_n = -\nabla p$. Operating the Helmholtz projection ${\mathbb P}$ to this equality, we find that $(\lambda + {\mathbb A}_{V,n}) v_n = 0$. This completes the proof of the if part. The proof of Proposition \ref{prop.charact.disc.ev} is complete.
\end{proof}
%%%%%
%

The following is a corollary of Propositions \ref{prop.spec.general} (\ref{item3.prop.spec.general}), \ref{prop.spec.small} (\ref{item3.prop.spec.small}) and \ref{prop.charact.disc.ev}. 
%
%%%%%%%%%%
\begin{corollary}\label{cor.prop.charact.disc.ev}
Let $\alpha,\delta\in\R$ be sufficiently small. We have 
$$
\sigma_{{\rm disc}}(-{\mathbb A}_V) \cap \Sigma_{\frac{3}{4}\pi}
= \bigcup_{|n|=1} 
\Big\{\lambda\in\Sigma_{\frac{3}{4}\pi}~\Big|~F_n(\sqrt{\lambda})=0\Big\}. 
$$ 
\end{corollary}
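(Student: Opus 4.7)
The plan is to combine three ingredients already at hand: the splitting of the discrete spectrum across the angular Fourier modes in Proposition \ref{prop.spec.general} (\ref{item3.prop.spec.general}), the sector-wide absence of discrete spectrum for the modes $|n|\neq 1$ coming from Proposition \ref{prop.spec.small} (\ref{item3.prop.spec.small}) together with Proposition \ref{prop.charact.disc.ev} (\ref{item1.prop.charact.disc.ev}), and the explicit zero-set description for the modes $|n|=1$ given by Proposition \ref{prop.charact.disc.ev} (\ref{item2.prop.charact.disc.ev}). So essentially the corollary is a bookkeeping statement and I expect no new analytic input; the only subtlety is to remember that Proposition \ref{prop.spec.small} (\ref{item3.prop.spec.small}) genuinely applies only to $|n|\neq 1$, and that the $n=0$ case is handled separately by Proposition \ref{prop.charact.disc.ev} (\ref{item1.prop.charact.disc.ev}).

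First, I would intersect the identity
\[
\sigma_{{\rm disc}}(-{\mathbb A}_{V})
= \bigcup_{n\in\Z} \sigma_{{\rm disc}}(-{\mathbb A}_{V,n})
\]
with the sector $\Sigma_{\frac{3}{4}\pi}$, turning the left-hand side of the claim into $\bigcup_{n\in\Z} \bigl( \sigma_{{\rm disc}}(-{\mathbb A}_{V,n}) \cap \Sigma_{\frac{3}{4}\pi} \bigr)$. Next I would dispose of the modes with $|n|\neq 1$: for $n=0$, Proposition \ref{prop.charact.disc.ev} (\ref{item1.prop.charact.disc.ev}) gives $\sigma_{{\rm disc}}(-{\mathbb A}_{V,0})=\emptyset$, while for $|n|\geq 2$, Proposition \ref{prop.spec.small} (\ref{item3.prop.spec.small}) gives $\Sigma_{\frac{3}{4}\pi}\subset \rho(-{\mathbb A}_{V,n})$, which via Proposition \ref{prop.spec.general} (\ref{item2.prop.spec.general}) forces $\sigma_{{\rm disc}}(-{\mathbb A}_{V,n})\cap \Sigma_{\frac{3}{4}\pi}=\emptyset$. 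Hence only the two terms $|n|=1$ survive.

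Finally, for $|n|=1$, Proposition \ref{prop.charact.disc.ev} (\ref{item2.prop.charact.disc.ev}) identifies
\[
\sigma_{{\rm disc}}(-{\mathbb A}_{V,n})
= \bigl\{ \lambda\in\C\setminus\R_{\le0}~\big|~F_n(\sqrt{\lambda})=0 \bigr\},
\]
and since $\Sigma_{\frac{3}{4}\pi}\subset \C\setminus\R_{\le0}$, intersecting with $\Sigma_{\frac{3}{4}\pi}$ yields the desired $\{\lambda\in\Sigma_{\frac{3}{4}\pi}~|~F_n(\sqrt{\lambda})=0\}$. Taking the union over $|n|=1$ produces the right-hand side of the claim. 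There is no substantive obstacle: the one point to double-check is the smallness hypothesis on $\alpha,\delta$, which is needed only for the application of Proposition \ref{prop.spec.small} (\ref{item3.prop.spec.small}) in the $|n|\geq 2$ step and is precisely the hypothesis of the corollary.
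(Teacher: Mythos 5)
Your proposal is correct and is essentially the paper's own argument: the paper states the corollary as an immediate consequence of Proposition \ref{prop.spec.general} (\ref{item3.prop.spec.general}), Proposition \ref{prop.spec.small} (\ref{item3.prop.spec.small}) and Proposition \ref{prop.charact.disc.ev}, exactly the ingredients you combine. The only cosmetic remark is that the $n=0$ mode is already covered by Proposition \ref{prop.spec.small} (\ref{item3.prop.spec.small}) (which holds for all $|n|\neq1$), so invoking Proposition \ref{prop.charact.disc.ev} (\ref{item1.prop.charact.disc.ev}) there is harmless but not needed.
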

%%%%%%%%%%
%

%%%%%%%%%%%%%%%%%%%%%%%%%%%%%%%%%%%%%%%%
%%%%%%%%%%%%%%%%%%%%%%%%%%%%%%%%%%%%%%%%
\section{Quantitative analysis of discrete spectrum}\label{sec.quant.anal.disc.spec}
%%%%%%%%%%%%%%%%%%%%%%%%%%%%%%%%%%%%%%%%
%%%%%%%%%%%%%%%%%%%%%%%%%%%%%%%%%%%%%%%%

In this section, keeping Corollary \ref{cor.prop.charact.disc.ev} in mind, we analyze zeros of the analytic function $F_n(\sqrt{\lambda})$ with $|n|=1$ defined in \eqref{def.Fn}. Thanks to Proposition \ref{prop.spec.small}, it suffices to consider the zeros in disks centered at the origin with radius exponentially small in $|\alpha|$. The main result is Proposition \ref{prop.est.Fn}. The proof is based on asymptotic analysis under the smallness of $\alpha,\delta$.

Note that one can recover the results in \cite{Maekawa(2017a), Higaki(2019)} by putting $\delta=0$ in the statements of this section. However, this observation is not useful in the proof since we need to describe precisely the zeros of functions having multiple parameters. A continuity argument is not enough and quantitative analysis is needed. In fact, it is revealed that situations are different depending on the sign of $\delta$, and that the case $\delta<0$ seems to be more delicate.

%%%%%%%%%%%%%%%%%%%%
%%%%%%%%%%%%%%%%%%%%
\subsection{Expansion of the order}\label{subsec.order}
%%%%%%%%%%%%%%%%%%%%
%%%%%%%%%%%%%%%%%%%%

When $|n|=1$, we denote 
\begin{align}\label{def.1delta.eta}
1_\delta = \Big\{1 + \Big(\frac{\delta}{2}\Big)^2\Big\}^\frac12, 
\qquad 
\eta_n = \xi_n - 1. 
\end{align}
Here $\xi_n$ is defined in \eqref{def.xi}. A direct computation shows that 
\begin{align*}
\begin{split}
\Re(\xi_n) 
&= 
\frac{1_\delta}{\sqrt{2}} 
\bigg[
\Big\{1 + \Big(\frac{\alpha}{1_\delta^2}\Big)^2\Big\}^\frac12 + 1 
\bigg]^{\frac12}, \\
\Im(\xi_n) 
&= 
\opsgn(\alpha n) 
\frac{1_\delta}{\sqrt{2}} 
\bigg[
\Big\{1 + \Big(\frac{\alpha}{1_\delta^2}\Big)^2\Big\}^\frac12 - 1 
\bigg]^{\frac12}. 
\end{split}
\end{align*}

We need the following expansion of $\eta_n$ in the next subsection.

%
%%%%%%%%%%
\begin{lemma}\label{lem.eta.asymptot.}
Let $|n|=1$. For sufficiently small $\alpha,\delta\in\R$, we have 
\begin{align}
\Re(\eta_n) 
&=
\frac{\alpha^2 + \delta^2}{8} 
+ O(\alpha^4 + \delta^4),
\label{est1.lem.eta.asymptot.} \\
\Im(\eta_n)
&=
\opsgn(\alpha n) \frac{|\alpha|}{2}
+ O\big( |\alpha| (\alpha^2 + \delta^2)\big). 
\label{est2.lem.eta.asymptot.}
\end{align}
All the implicit constants in $O(\cdot)$ are independent of $\alpha,\delta$. 
\end{lemma}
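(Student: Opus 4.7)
The plan is to prove both expansions by a direct Taylor expansion of $\xi_n = \sqrt{1_\delta + i\alpha n}$. For $|n|=1$, the definitions \eqref{def.xi} and \eqref{def.1delta.eta} give $\xi_n^2 = 1_\delta + i\alpha n$, and since $1_\delta \ge 1$ while $|i\alpha n/1_\delta| \le |\alpha|$ is small, the principal square root may be used to factor
\begin{equation*}
\xi_n = \sqrt{1_\delta}\,\sqrt{1 + i\alpha n/1_\delta}.
\end{equation*}

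The computation then proceeds in two layers. First, I would apply the binomial expansion $\sqrt{1+z} = 1 + z/2 - z^2/8 + O(z^3)$ with $z = i\alpha n/1_\delta$ to obtain
\begin{equation*}
\sqrt{1 + i\alpha n/1_\delta}
= 1 + \frac{i\alpha n}{2\cdot 1_\delta} + \frac{\alpha^2}{8\cdot 1_\delta^2} + O(|\alpha|^3),
\end{equation*}
which is purely algebraic once $|z|$ is bounded. Second, I would expand $\sqrt{1_\delta} = (1+\delta^2/4)^{1/4}$ and the negative powers of $1_\delta$ in powers of $\delta^2$ using the same binomial series; each such expansion has remainder $O(\delta^4)$. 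Multiplying the two factors and subtracting $1$ yields
\begin{equation*}
\eta_n
= \bigl(\sqrt{1_\delta} - 1\bigr) + \frac{i\alpha n}{2\sqrt{1_\delta}} + \frac{\alpha^2}{8\, 1_\delta^{3/2}} + O(|\alpha|^3).
\end{equation*}
Taking real and imaginary parts and re-expanding the $\delta$-dependent coefficients gives \eqref{est1.lem.eta.asymptot.} and \eqref{est2.lem.eta.asymptot.}; the identity $\alpha n = \opsgn(\alpha n)\,|\alpha|$ converts the imaginary part into the stated form.

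The bookkeeping of error terms is the only point requiring care. The remainder from $\sqrt{1+z}$ at order $z^3$ contributes $O(|\alpha|^3)$, absorbed into $O(|\alpha|(\alpha^2+\delta^2))$ on the imaginary side. The factor $1/1_\delta^2 = 1 + O(\delta^2)$ multiplies the $\alpha^2$-term to produce cross terms $O(\alpha^2\delta^2) \subset O(\alpha^4 + \delta^4)$ via $2\alpha^2\delta^2 \le \alpha^4 + \delta^4$. Similarly, the $\delta^2$-correction in $1/\sqrt{1_\delta}$ combined with the $O(|\alpha|)$ imaginary term yields $O(|\alpha|\delta^2) \subset O(|\alpha|(\alpha^2+\delta^2))$. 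There is no analytical obstacle here; the entire proof is a short exercise in tracking orders in two small parameters, made possible by the smallness assumption on $\alpha,\delta$ that keeps both binomial expansions convergent. This should be contrasted with the substantially more delicate asymptotic analysis of $F_n$ in the remainder of Section~\ref{sec.quant.anal.disc.spec}, where the sign of $\delta$ plays a nontrivial role.
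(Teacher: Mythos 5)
There is a genuine gap: carried out honestly, your expansion does not produce \eqref{est1.lem.eta.asymptot.}. From your starting point $\xi_n^2 = 1_\delta + i\alpha n$ you arrive at $\Re(\eta_n) = (\sqrt{1_\delta}-1) + \frac{\alpha^2}{8\,1_\delta^{3/2}} + O(\alpha^4)$, and since $\sqrt{1_\delta}-1 = (1+\delta^2/4)^{1/4}-1 = \frac{\delta^2}{16}+O(\delta^4)$, this gives $\Re(\eta_n) = \frac{\alpha^2}{8} + \frac{\delta^2}{16} + O(\alpha^4+\delta^4)$: the coefficient of $\delta^2$ comes out as $\tfrac1{16}$, not the asserted $\tfrac18$, so your final step (``re-expanding the $\delta$-dependent coefficients gives \eqref{est1.lem.eta.asymptot.}'') is false for the quantity you actually expanded. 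The mismatch comes from taking \eqref{def.xi} literally. The quantity that enters the analysis is the Bessel order of the transformed vorticity equation \eqref{eq.polar.vor}: by the computation in Appendix \ref{app.hom.eq.vor} it satisfies $\xi_n^2 = n^2 + (\delta/2)^2 + i\alpha n$, hence $\xi_n^2 = 1_\delta^2 + i\alpha n$ for $|n|=1$ (the inner exponent $\tfrac12$ in \eqref{def.xi} is a misprint). This is also the only reading consistent with the closed formulas for $\Re(\xi_n)$ and $\Im(\xi_n)$ displayed just before the lemma, which are precisely the starting point of the paper's own proof.

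With the corrected starting point $\xi_n = 1_\delta\,(1 + i\alpha n/1_\delta^2)^{1/2}$, your two-layer binomial argument goes through and is essentially the paper's proof (the paper Taylor-expands the explicit formulas for $\Re(\xi_n)$ and $\Im(\xi_n)$; expanding the complex square root directly is equivalent): one gets $\eta_n = (1_\delta - 1) + \frac{i\alpha n}{2\,1_\delta} + \frac{\alpha^2}{8\,1_\delta^3} + (\text{error})$, where the cubic term in the binomial series is purely imaginary so the real-part error is $O(\alpha^4)$, and now $1_\delta - 1 = \frac{\delta^2}{8}+O(\delta^4)$ gives the correct $\frac{\alpha^2+\delta^2}{8}$, while the imaginary part is $\opsgn(\alpha n)\frac{|\alpha|}{2\,1_\delta}+O(|\alpha|^3) = \opsgn(\alpha n)\frac{|\alpha|}{2}+O\big(|\alpha|(\alpha^2+\delta^2)\big)$. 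Note that your imaginary-part conclusion was unaffected at the stated order, since both $\frac{1}{2\sqrt{1_\delta}}$ and $\frac{1}{2\,1_\delta}$ equal $\frac12 + O(\delta^2)$; the error is confined to the $\delta^2$ coefficient in the real part, which is exactly the quantity the lemma is later used to control.
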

%%%%%%%%%%
%
%
%%%%%
\begin{proof}
The proof is done by the Taylor theorem. For \eqref{est1.lem.eta.asymptot.}, from 
$$
\Re(\eta_n) 
= \Re(\xi_n) - 1
= 1_\delta
\bigg\{ 
1 
+ \frac{1}{8} \Big(\frac{\alpha}{1_\delta^2}\Big)^2 
- \frac{5}{128} \Big(\frac{\alpha}{1_\delta^2}\Big)^4
+ O(\alpha^6)
\bigg\}
- 1, 
$$
we see that 
$$
\Re(\eta_n) 
= 
1_\delta - 1 
+ \frac{\alpha^2}{8} 
+ \frac{\alpha^2}{8} \Big(\frac{1}{1_\delta^3} - 1\Big) 
- \frac{5\alpha^4}{128}  
- \frac{5\alpha^4}{128} \Big(\frac{1}{1_\delta^7} - 1\Big)
+ O(\alpha^6). 
$$
Hence \eqref{est1.lem.eta.asymptot.} is obtained by 
\begin{align*}
1_\delta - 1 
= \frac{\delta^2}8 - \frac{\delta^4}{128} + O(\delta^6) 
\end{align*}
and
\begin{align*}
\frac{1}{1_\delta^3} - 1 
= -\frac{3\delta^2}{8} + O(\delta^4), 
\qquad 
\frac{1}{1_\delta^7} - 1 
= -\frac{7\delta^2}{8} + O(\delta^4). 
\end{align*}
For \eqref{est2.lem.eta.asymptot.}, from 
$$
\Im(\eta_n) 
= \Im(\xi_n) 
= 
\opsgn(\alpha n) 
1_\delta
\bigg\{ 
\frac{1}{2} \Big|\frac{\alpha}{1_\delta^2}\Big| 
- \frac{1}{16} \Big|\frac{\alpha}{1_\delta^2}\Big|^3
+ O(|\alpha|^5)
\bigg\}, 
$$
we see that 
$$
\Im(\eta_n) 
= 
\opsgn(\alpha n) 
\bigg\{ 
\frac{|\alpha|}{2} 
+ \frac{|\alpha|}{2} \Big(\frac{1}{1_\delta} - 1\Big) 
- \frac{|\alpha|^3}{16} 
- \frac{|\alpha|^3}{16} \Big(\frac{1}{1_\delta^5} - 1\Big) 
+ O(|\alpha|^5) 
\bigg\}. 
$$
Hence \eqref{est2.lem.eta.asymptot.} is obtained by 
\begin{align*}
\frac{1}{1_\delta} - 1 
= -\frac{\delta^2}{8} + O(\delta^4), 
\qquad 
\frac{1}{1_\delta^5} - 1 
= -\frac{5\delta^2}{8} + O(\delta^4). 
\end{align*}
This completes the proof. 
\end{proof}
%%%%%
%

%%%%%%%%%%%%%%%%%%%%
%%%%%%%%%%%%%%%%%%%%
\subsection{Asymptotic analysis}\label{subsec.asymptot.anal.}
%%%%%%%%%%%%%%%%%%%%
%%%%%%%%%%%%%%%%%%%%

We consider $F_n(\sqrt{\lambda})$ in \eqref{def.Fn} with $|n|=1$, namely, the function 
\begin{align}\label{def.Fn.z}
F_n(z) = \int_{1}^{\infty} s^{-\frac{\delta}{2}} K_{1+\eta_n}(zs) \dd s, 
\quad 
z\in\Sigma_{\frac{\pi}2}. 
\end{align}
%

%
%%%%%%%%%%
\begin{lemma}\label{lem.Fn.1}
Let $|n|=1$. For $\alpha,\delta\in\R$, we have 
\begin{align}\label{eq1.lem.Fn.1}
\Big(\frac{\delta}{2}+\eta_n\Big) F_n(z) 
= 
K_{1+\eta_n}(z) 
- z\int_{1}^{\infty} s^{1-\frac{\delta}{2}} K_{\eta_n}(zs) \dd s,
\quad 
z\in\Sigma_{\frac{\pi}2}. 
\end{align}
\end{lemma}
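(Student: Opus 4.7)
The plan is to derive \eqref{eq1.lem.Fn.1} by a single integration by parts based on the standard modified-Bessel derivative identity
\begin{equation*}
\frac{\dd}{\dd s}\bigl[s^{1+\eta_n}K_{1+\eta_n}(zs)\bigr]=-z\,s^{1+\eta_n}K_{\eta_n}(zs),
\end{equation*}
which follows from the recurrence $K_\nu'(\zeta)=-K_{\nu-1}(\zeta)-\nu\zeta^{-1}K_\nu(\zeta)$ collected in Appendix \ref{appendix.bessel}. This identity is precisely the link needed to exchange an integrand involving $K_{\eta_n}(zs)$ (the object appearing on the right-hand side of \eqref{eq1.lem.Fn.1}) with one involving $K_{1+\eta_n}(zs)$ (which defines $F_n(z)$).

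Concretely, I would start from the right-hand side of \eqref{eq1.lem.Fn.1}, rewrite the $s^{1-\delta/2}$ weight as $s^{-\delta/2-\eta_n}\cdot s^{1+\eta_n}$, and apply the displayed identity:
\begin{align*}
z\int_{1}^{\infty}s^{1-\frac{\delta}{2}}K_{\eta_n}(zs)\,\dd s
&=z\int_{1}^{\infty}s^{-\frac{\delta}{2}-\eta_n}\bigl[s^{1+\eta_n}K_{\eta_n}(zs)\bigr]\dd s\\
&=-\int_{1}^{\infty}s^{-\frac{\delta}{2}-\eta_n}\frac{\dd}{\dd s}\bigl[s^{1+\eta_n}K_{1+\eta_n}(zs)\bigr]\dd s.
\end{align*}
An integration by parts in the last integral yields a boundary term at $s=1$ equal to $K_{1+\eta_n}(z)$, while the term at $s=\infty$ vanishes because $z\in\Sigma_{\pi/2}$ gives $\Re z>0$ and hence $K_{1+\eta_n}(zs)$ decays exponentially by the large-argument asymptotics of $K_\nu$. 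In the interior term one computes $\frac{\dd}{\dd s}\bigl[s^{-\frac{\delta}{2}-\eta_n}\bigr]=-(\tfrac{\delta}{2}+\eta_n)s^{-\frac{\delta}{2}-\eta_n-1}$, so after recombining the powers the integrand reduces to $(\tfrac{\delta}{2}+\eta_n)\,s^{-\delta/2}K_{1+\eta_n}(zs)$, whose integral over $(1,\infty)$ is exactly $(\tfrac{\delta}{2}+\eta_n)F_n(z)$. Rearranging the resulting equality gives \eqref{eq1.lem.Fn.1}.

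The argument is essentially a one-step manipulation, so there is no substantial obstacle; the only things to verify are the absolute convergence of both integrals (ensured by the exponential decay of $K_\nu(zs)$ together with the mild polynomial weight $s^{1-\delta/2}$ for small $|\delta|$) and the scalar chain rule $\frac{\dd}{\dd s}K_\nu(zs)=zK_\nu'(zs)$, which is valid since $\nu=1+\eta_n$ is a fixed complex parameter and $K_\nu(\zeta)$ is holomorphic in $\zeta\in\Sigma_{\pi/2}$.
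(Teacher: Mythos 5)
Your proof is correct and is essentially the paper's argument: the identity $\frac{\dd}{\dd s}\bigl[s^{1+\eta_n}K_{1+\eta_n}(zs)\bigr]=-z\,s^{1+\eta_n}K_{\eta_n}(zs)$ is just the recurrence $\mu K_{\mu}(\zeta)=-\zeta K_{\mu}'(\zeta)-\zeta K_{\mu-1}(\zeta)$ used in the paper, and both proofs amount to the same single integration by parts (the paper starts from $(1+\eta_n)F_n(z)$, you start from the integral of $K_{\eta_n}$, which only reshuffles where the term $(1-\tfrac{\delta}{2})F_n(z)$ appears). Just track the sign of the interior term carefully: after the integration by parts it contributes $-(\tfrac{\delta}{2}+\eta_n)F_n(z)$ to the right-hand side, which is exactly what the final rearrangement requires.
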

%%%%%%%%%%
%
%
%%%%%
\begin{proof}
By the recurrence relation (see \cite[Chapter $\mathrm{I}\hspace{-1.2pt}\mathrm{I}\hspace{-1.2pt}\mathrm{I}$ 3$\cdot$71 (3)]{Watson(1944)})
$$
\mu K_{\mu}(z)
= - z \frac{\dd K_{\mu}}{\dd z}(z) 
- z K_{\mu-1}(z), 
$$
we have  
$$
(1+\eta_n) K_{1+\eta_n}(zs) 
= -s \frac{\dd}{\dd s} K_{1+\eta_n}(zs) 
- zs K_{\eta_n}(zs). 
$$
Thus the definition \eqref{def.Fn.z} and integration by parts give 
\begin{align*}
\begin{split}
(1+\eta_n) F_n(z) 
&= 
\int_{1}^{\infty} s^{-\frac{\delta}{2}} 
(1+\eta_n) K_{1+\eta_n}(zs) \dd s \\
&= 
\int_{1}^{\infty} s^{-\frac{\delta}{2}} 
\Big(-s \frac{\dd}{\dd s} K_{1+\eta_n}(zs) 
- zs K_{\eta_n}(zs)\Big) \dd s \\
&= 
K_{1+\eta_n}(z) 
+ \Big(1 - \frac{\delta}{2}\Big) F_n(z) 
- z\int_{1}^{\infty} s^{1-\frac{\delta}{2}} K_{\eta_n}(zs) \dd s, 
\end{split}
\end{align*}
which implies the assertion of the lemma. 
\end{proof}
%%%%%
%

Using the relation \eqref{eq1.lem.Fn.1}, we investigate zeros of $F_n(z)$ near the origin. We perform asymptotic analysis when $|z|$ is sufficiently small. Since the asymptotics of $K_{1+\eta_n}(z)$ is already obtained in Lemma \ref{lem.est.bessel2} (\ref{item1.lem.est.bessel2}), we focus on the second term on the right-hand side of \eqref{eq1.lem.Fn.1}. In what follows in this section, we assume smallness of $\alpha,\delta$. Although some estimates can be proved under weaker assumptions, we will not give the details for simplicity.
%
%%%%%%%%%%
\begin{lemma}\label{lem.Fn.2}
Let $|n|=1$. For sufficiently small $\alpha,\delta\in\R$, we have 
\begin{align}\label{eq1.lem.Fn.2}
z\int_{1}^{\infty} s^{1-\frac{\delta}{2}} K_{\eta_n}(zs) \dd s 
&= 
\frac{\Delta(\delta,\eta_n)}{2} \Big(\frac{z}2\Big)^{-1+\frac{\delta}2} 
+ R^{(2)}_n(z), 
\quad
z \in\Sigma_{\frac{\pi}2} \cap \{|z|<1\}.
\end{align}
Here $\Delta(\delta,\eta_n)$ is defined by 
\begin{align}\label{def.Delta}
\Delta(\delta,\eta_n) 
= \Gamma\Big(1-\frac{\delta}4-\frac{\eta_n}2\Big) 
\Gamma\Big(1-\frac{\delta}4+\frac{\eta_n}2\Big), 
\end{align}
where $\Gamma(z)$ is the Gamma function, and $R^{(2)}_n(z)$ is the remainder and satisfies 
\begin{align}\label{est1.lem.Fn.2}
|R^{(2)}_n(z)| 
\le 
C|z|^{1-\Re \eta_n} 
\big(1 + \big|\log|z|\big|\big), 
\quad 
z \in\Sigma_{\frac{\pi}2} \cap \{|z|<1\}.
\end{align}
The constant $C$ is independent of $\alpha,\delta$. 
\end{lemma}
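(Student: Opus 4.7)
The plan is to extend the integration from $(1,\infty)$ to $(0,\infty)$, read off the main term from the Mellin transform of $K_{\eta_n}$ in closed form, and absorb the contribution from $(0,1)$ into the remainder. Concretely, write
\begin{equation*}
z\int_{1}^{\infty} s^{1-\frac{\delta}{2}} K_{\eta_n}(zs) \dd s = z\int_{0}^{\infty} s^{1-\frac{\delta}{2}} K_{\eta_n}(zs) \dd s - z\int_{0}^{1} s^{1-\frac{\delta}{2}} K_{\eta_n}(zs) \dd s,
\end{equation*}
and identify the second summand as $R^{(2)}_n(z)$.

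For the main term, after substituting $t=zs$ (the deformation of the $s$-contour is justified for $z\in\Sigma_{\pi/2}$ by the exponential decay of $K_{\eta_n}$ at infinity and the integrability at $0$, via Lemma~\ref{lem.eta.asymptot.}), I would invoke the classical Mellin identity
\begin{equation*}
\int_{0}^{\infty} t^{\mu-1} K_{\nu}(t) \dd t
= 2^{\mu-2}\, \Gamma\!\left(\tfrac{\mu-\nu}{2}\right) \Gamma\!\left(\tfrac{\mu+\nu}{2}\right),
\qquad \Re \mu > |\Re \nu|,
\end{equation*}
with $\mu = 2-\delta/2$ and $\nu = \eta_n$. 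The hypothesis $\Re\mu > |\Re \nu|$ holds for $|\alpha|+|\delta|$ small thanks to Lemma~\ref{lem.eta.asymptot.}. A short bookkeeping of the factors of $2$ then produces exactly $\tfrac{\Delta(\delta,\eta_n)}{2}(z/2)^{-1+\delta/2}$.

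For the remainder, the key input is a uniform small-argument bound of the form
\begin{equation*}
|K_{\eta_n}(w)| \le C\, |w|^{-\Re \eta_n}\bigl(1+\bigl|\log|w|\bigr|\bigr),
\qquad w\in\Sigma_{\pi/2},\ 0<|w|<1,
\end{equation*}
valid uniformly for $\eta_n$ in a complex neighborhood of $0$ (supplied by Appendix~\ref{appendix.bessel}). Inserting this bound into $R_n^{(2)}(z)$, using $|\log|zs|| \le |\log|z|| + |\log s|$ on $(0,1)$, yields
\begin{equation*}
|R_n^{(2)}(z)| \le C\, |z|^{1-\Re \eta_n} \int_{0}^{1} s^{1-\frac{\delta}{2}-\Re\eta_n} \bigl(1+|\log|z||+|\log s|\bigr) \dd s.
\end{equation*}
Since $\delta$ and $\Re\eta_n$ are small, the exponent $1-\delta/2-\Re\eta_n$ is bounded away from $-1$, so each $s$-integral is bounded uniformly in the parameters, giving the claimed estimate.

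The delicate point is securing the uniform small-argument bound for $K_{\eta_n}$. The logarithmic factor is unavoidable because the standard expansion
$K_\nu(w) \sim \tfrac{1}{2}[\Gamma(\nu)(w/2)^{-\nu} + \Gamma(-\nu)(w/2)^{\nu}]$
degenerates at $\nu=0$ into the $\log$-type singularity of $K_0$; the coefficients $\Gamma(\pm\eta_n)$ blow up like $\pm 1/\eta_n$, and one must exploit cancellation between the two pieces to produce a clean $(1+|\log|w||)$ bound. I expect this to be precisely the role of the corresponding lemma in Appendix~\ref{appendix.bessel}; given that input, the proof above is routine.
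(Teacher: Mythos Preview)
Your proposal is correct and follows essentially the same approach as the paper: split $\int_1^\infty = \int_0^\infty - \int_0^1$, identify the full-line integral as the main term, and bound the $(0,1)$-contribution via the small-argument behavior of $K_{\eta_n}$ supplied by Lemma~\ref{lem.est.bessel2}~(\ref{item2.lem.est.bessel2}). The only cosmetic difference is that the paper derives the value of $z\int_0^\infty s^{1-\delta/2}K_{\eta_n}(zs)\,\dd s$ from the integral representation~\eqref{rep.K.int} and Fubini (reducing to a Beta integral), whereas you invoke the Mellin identity directly together with the substitution $t=zs$; both routes yield the same closed form.
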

%%%%%%%%%%
%
%
%%%%%
\begin{proof}
If we show that 
\begin{align}\label{eq1.proof.lem.Fn.2}
\begin{split}
z\int_{0}^{\infty} s^{1-\frac{\delta}{2}} K_{\eta_n}(zs) \dd s 
= \frac{\Delta(\delta,\eta_n)}{2} \Big(\frac{z}2\Big)^{-1+\frac{\delta}2}, 
\end{split}
\end{align}
the assertion follows. Indeed, it is not hard to check that 
$$
R^{(2)}_n(z)
:=
-z \int_{0}^{1} s^{1-\frac{\delta}{2}} K_{\eta_n}(zs) \dd s 
$$
satisfies \eqref{est1.lem.Fn.2} using the estimates in Lemma \ref{lem.est.bessel2} (\ref{item2.lem.est.bessel2}).

By the representation \eqref{rep.K.int} and by the Fubini theorem, we have 
\begin{align*}
\begin{split}
&z\int_{0}^{\infty} s^{1-\frac{\delta}{2}} K_{\eta_n}(zs) \dd s \\
&=
z\int_{0}^{\infty} s^{1-\frac{\delta}{2}} 
\bigg(\frac12 \int_{0}^{\infty} e^{-\frac{zs}2(t+\frac1{t})} t^{-\eta_n-1} \dd t \bigg) \dd s \\
&=
\frac{z}2 
\int_{0}^{\infty} 
t^{-1-\eta_n} 
\bigg(
\int_{0}^{\infty} s^{1-\frac{\delta}{2}} e^{-\frac{z}2(t+\frac1{t})s} \dd s\bigg) \dd t. 
\end{split}
\end{align*}
Observing that 
\begin{align*}
\int_{0}^{\infty} 
s^{1-\frac{\delta}{2}} e^{-as} \dd s
= \Gamma\Big(2-\frac{\delta}2\Big) a^{-2+\frac{\delta}2},
\quad 
a\in\Sigma_{\frac{\pi}2}, 
\end{align*}
we have 
\begin{align*}
\begin{split}
&
\frac{z}2 
\int_{0}^{\infty} 
t^{-1-\eta_n} 
\bigg(
\int_{0}^{\infty} 
s^{1-\frac{\delta}{2}} e^{-\frac{z}2(t+\frac1{t})s} \dd s\bigg) \dd t \\
&= 
\Gamma\Big(2-\frac{\delta}2\Big) 
\Big(\frac{z}2\Big)^{-1+\frac{\delta}2} 
\int_{0}^{\infty}
t^{-1-\eta_n} 
\Big(t+\frac1{t}\Big)^{-2+\frac{\delta}2}
\dd t \\
&= 
\Gamma\Big(2-\frac{\delta}2\Big) 
\Big(\frac{z}2\Big)^{-1+\frac{\delta}2} 
\int_{0}^{\infty}
\frac{t^{1-\frac{\delta}2-\eta_n}}{(t^2+1)^{2-\frac{\delta}2}} 
\dd t. 
\end{split}
\end{align*}
The change of variable $t=\tau^\frac12$ leads to 
\begin{align*}
\begin{split}
\int_{0}^{\infty}
\frac{t^{1-\frac{\delta}2-\eta_n}}{(t^2+1)^{2-\frac{\delta}2}} 
\dd t
&= \frac12 
\int_{0}^{\infty}
\frac{\tau^{-\frac{\delta}4-\frac{\eta_n}2}}{(\tau+1)^{2-\frac{\delta}{2}}} 
\dd \tau \\
&= 
\frac12 
B\Big(1-\frac{\delta}4-\frac{\eta_n}2, 
1-\frac{\delta}4+\frac{\eta_n}2\Big), 
\end{split}
\end{align*}
where $B(p,q)$ is the Beta function. Then the well-known formulas
\begin{align*}
z\Gamma(z) = \Gamma(z+1), \qquad 
B(p,q) = \frac{\Gamma(p)\Gamma(q)}{\Gamma(p+q)} 
\end{align*}
imply \eqref{eq1.proof.lem.Fn.2}. This completes the proof. 
\end{proof}
%%%%%
%

%
%%%%%%%%%%
\begin{corollary}\label{cor.lem.Fn.2}
Let $|n|=1$. For sufficiently small $\alpha,\delta\in\R$, we have 
\begin{align}\label{eq1.cor.lem.Fn.2}
\begin{split}
\Big(\frac{\delta}{2}+\eta_n\Big) F_n(z) 
&= 
\frac{\Gamma(1+\eta_n)}{2} \Big(\frac{z}{2}\Big)^{-1-\eta_n} 
- \frac{\Delta(\delta,\eta_n)}{2} \Big(\frac{z}2\Big)^{-1+\frac{\delta}2} \\
&\quad
+ R^{(3)}_n(z),
\quad
z \in\Sigma_{\frac{\pi}2} \cap \{|z|<1\}. 
\end{split}
\end{align}
Here $R^{(3)}_n$ is the remainder and satisfies
\begin{align}\label{est1.cor.lem.Fn.2}
|R^{(3)}_n(z)| 
\le 
C|z|^{1-\Re \eta_n} 
\big(1 + \big|\log|z|\big|\big), 
\quad 
z \in\Sigma_{\frac{\pi}2} \cap \{|z|<1\}. 
\end{align}
The constant $C$ is independent of $\alpha,\delta$. 
\end{corollary}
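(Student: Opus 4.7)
The corollary is essentially a direct consequence of the two preceding lemmas together with the small-argument asymptotics of $K_{1+\eta_n}(z)$ already recorded in Lemma \ref{lem.est.bessel2}(i). The plan is to substitute \eqref{eq1.lem.Fn.2} into \eqref{eq1.lem.Fn.1}, giving
\begin{align*}
\Big(\frac{\delta}{2}+\eta_n\Big) F_n(z)
= K_{1+\eta_n}(z)
- \frac{\Delta(\delta,\eta_n)}{2} \Big(\frac{z}{2}\Big)^{-1+\frac{\delta}{2}}
- R^{(2)}_n(z),
\end{align*}
valid on $\Sigma_{\pi/2}\cap\{|z|<1\}$. This already produces the second explicit term on the right-hand side of \eqref{eq1.cor.lem.Fn.2}, and isolates $K_{1+\eta_n}(z)$ as the only object still needing to be expanded.

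Next I would apply Lemma \ref{lem.est.bessel2}(i) to extract the leading singular behavior of $K_{1+\eta_n}(z)$ near the origin, namely
\begin{align*}
K_{1+\eta_n}(z)
= \frac{\Gamma(1+\eta_n)}{2}\Big(\frac{z}{2}\Big)^{-1-\eta_n}
+ \widetilde{R}_n(z),
\end{align*}
where $|\widetilde{R}_n(z)| \le C|z|^{1-\Re \eta_n}\bigl(1+\bigl|\log|z|\bigr|\bigr)$ for $z\in\Sigma_{\pi/2}\cap\{|z|<1\}$, with $C$ independent of $\alpha,\delta$. The logarithmic factor appears because the order $1+\eta_n$ is close to the integer $1$ (Lemma \ref{lem.eta.asymptot.} gives $\Re\eta_n=O(\alpha^2+\delta^2)$), so the two series defining $K_\nu$ through $I_{\pm\nu}$ have large but nearly cancelling coefficients that generate a $\log$ contribution in the limit. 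Since the smallness of $\alpha,\delta$ keeps $\Gamma(1+\eta_n)$ bounded and $\Re\eta_n$ near $0$, the constant in the remainder can indeed be chosen independent of $\alpha,\delta$.

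Finally I would set
\begin{align*}
R^{(3)}_n(z) = \widetilde{R}_n(z) - R^{(2)}_n(z),
\end{align*}
which immediately yields the identity \eqref{eq1.cor.lem.Fn.2}. The estimate \eqref{est1.cor.lem.Fn.2} follows from the triangle inequality applied to the two remainder bounds, both of which share the form $C|z|^{1-\Re\eta_n}(1+|\log|z||)$. There is no real obstacle in this argument; the only point requiring care is the bookkeeping of the logarithmic correction in the Bessel asymptotics, which must be carried along because the order $1+\eta_n$ lies near an integer for small $\alpha,\delta$. All genuinely delicate work was already performed in Lemma \ref{lem.Fn.2}, where the singular term $(z/2)^{-1+\delta/2}$ was extracted from the integral on $(1,\infty)$.
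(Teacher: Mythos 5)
Your proposal is correct and matches the paper's argument: the paper likewise obtains the corollary by inserting the expansion of Lemma \ref{lem.Fn.2} into the identity of Lemma \ref{lem.Fn.1} and replacing $K_{1+\eta_n}(z)$ by its small-argument asymptotics from Lemma \ref{lem.est.bessel2} (\ref{item1.lem.est.bessel2}), with the remainder $R^{(3)}_n = R^{(1)}_n - R^{(2)}_n$ estimated by the triangle inequality. No gaps.
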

%%%%%%%%%%
%
%
%%%%%
\begin{proof}
This is a consequence of the previous proposition and Lemma \ref{lem.est.bessel2} (\ref{item1.lem.est.bessel2}). 
\end{proof}
%%%%%
%

Proposition \ref{prop.est.Fn} below, giving a lower bound of $|F_n(z)|$, is proved based on the expansion \eqref{eq1.cor.lem.Fn.2}. In the proof, we need precise estimates of the coefficients appearing in \eqref{eq1.cor.lem.Fn.2}. 
%
%%%%%%%%%%
\begin{lemma}\label{lem.Gamma.Delta.asymptot.}
Let $|n|=1$. For sufficiently small $\alpha,\delta\in\R$, we have 
\begin{align}
\opLog \Gamma(1+\eta_n) 
&= -\gamma \eta_n + O(|\eta_n|^2), 
\label{est1.lem.Gamma.Delta.asymptot.} \\
\opLog \Delta(\delta,\eta_n) 
&= \gamma \Big(\frac{\delta}2\Big) 
+  O\Big(\Big(\frac{\delta}2\Big)^2+|\eta_n|^2\Big)
\label{est2.lem.Gamma.Delta.asymptot.}, 
\end{align}
where $\gamma = 0.5772\ldots$ is the Euler constant. Moreover, if $\delta\ge0$, 
\begin{align}\label{est3.lem.Gamma.Delta.asymptot.}
\opLog \Delta(\delta,\eta_n) 
- \opLog \Gamma(1+\eta_n)
&= \gamma \Big(\frac{\delta}2+\eta_n\Big) 
+ O\Big(\Big|\frac{\delta}2+\eta_n\Big|^2\Big). 
\end{align}
All the implicit constants in $O(\cdot)$ are independent of $\alpha,\delta$. 
\end{lemma}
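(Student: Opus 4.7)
The plan is to reduce everything to the Taylor expansion of $\log\Gamma$ at $1$, namely
\begin{equation*}
\opLog\Gamma(1+z) = -\gamma z + \sum_{k\ge2} \frac{(-1)^k\zeta(k)}{k} z^k = -\gamma z + O(|z|^2)
\end{equation*}
valid for $|z|$ small. By Lemma \ref{lem.eta.asymptot.}, $|\eta_n|\to 0$ and $|\delta|\to 0$ under our smallness assumption, so this expansion is applicable to all arguments below.

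First I would apply the expansion with $z=\eta_n$ to obtain \eqref{est1.lem.Gamma.Delta.asymptot.} directly. For \eqref{est2.lem.Gamma.Delta.asymptot.}, I would set $a = -\frac{\delta}{4} - \frac{\eta_n}{2}$ and $b = -\frac{\delta}{4} + \frac{\eta_n}{2}$ in the factorisation $\Delta(\delta,\eta_n) = \Gamma(1+a)\Gamma(1+b)$, so that
\begin{equation*}
\opLog \Delta(\delta,\eta_n) = -\gamma(a+b) + O(|a|^2+|b|^2) = \gamma\Big(\frac{\delta}{2}\Big) + O\Big(\Big(\frac{\delta}{2}\Big)^2+|\eta_n|^2\Big),
\end{equation*}
the cross terms cancelling since $a+b=-\delta/2$.

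The one genuine point is \eqref{est3.lem.Gamma.Delta.asymptot.}: subtracting \eqref{est1.lem.Gamma.Delta.asymptot.} from \eqref{est2.lem.Gamma.Delta.asymptot.} immediately yields the leading term $\gamma(\delta/2+\eta_n)$, but the resulting remainder is only $O((\delta/2)^2+|\eta_n|^2)$, and I must upgrade this to $O(|\delta/2+\eta_n|^2)$. This is where the hypothesis $\delta\ge 0$ enters, and it is the step I would expect to be the main obstacle. The idea is that for $\alpha,\delta$ small enough, Lemma \ref{lem.eta.asymptot.} gives $\Re(\eta_n) = (\alpha^2+\delta^2)/8+O(\alpha^4+\delta^4)\ge 0$, hence $\Re(\delta/2+\eta_n)\ge \delta/2\ge 0$. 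Combined with $\Im(\delta/2+\eta_n)=\Im(\eta_n)$, this yields the no-cancellation lower bound
\begin{equation*}
|\delta/2+\eta_n|^2 \ge (\delta/2)^2 + |\Im(\eta_n)|^2.
\end{equation*}
Meanwhile, by Lemma \ref{lem.eta.asymptot.} the real part of $\eta_n$ is of order $\alpha^2+\delta^2$ while the imaginary part is of order $|\alpha|$, so $|\eta_n|^2 = |\Im(\eta_n)|^2 + O((\alpha^2+\delta^2)^2)$, and the higher-order piece is absorbed by $(\delta/2)^2 + |\Im(\eta_n)|^2 \gtrsim \delta^2+\alpha^2$. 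Altogether
\begin{equation*}
(\delta/2)^2 + |\eta_n|^2 \le C\bigl(|\delta/2+\eta_n|^2\bigr),
\end{equation*}
which converts the remainder from \eqref{est2.lem.Gamma.Delta.asymptot.}--\eqref{est1.lem.Gamma.Delta.asymptot.} into the required form. Note that the sign assumption is used precisely to prevent destructive interference between $\delta/2$ and $\Re(\eta_n)$ in the real part; for $\delta<0$ the two could cancel and this step would fail, mirroring the appearance of the sign hypothesis in Theorem \ref{thm.L2L2}.
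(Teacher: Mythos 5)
Your proposal is correct and follows essentially the same route as the paper: Taylor expansion of $\opLog\Gamma(1+z)$ at $z=0$, applied to the two Gamma factors of $\Delta(\delta,\eta_n)$, with the hypothesis $\delta\ge0$ together with $\Re\eta_n\ge0$ (from Lemma \ref{lem.eta.asymptot.}) used exactly where the paper uses it, namely to upgrade the remainder $O\big((\tfrac{\delta}{2})^2+|\eta_n|^2\big)$ to $O\big(|\tfrac{\delta}{2}+\eta_n|^2\big)$. The only cosmetic difference is in that last step: the paper invokes the identity $(\tfrac{\delta}{2})^2+|\eta_n|^2=|\tfrac{\delta}{2}+\eta_n|^2-\delta\Re\eta_n\le|\tfrac{\delta}{2}+\eta_n|^2$, while you bound $|\tfrac{\delta}{2}+\eta_n|^2$ from below by $(\tfrac{\delta}{2})^2+|\Im\eta_n|^2$ and absorb the higher-order term $(\Re\eta_n)^2$ using the smallness of $\alpha,\delta$ — both arguments are valid and rest on the same sign information.
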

%%%%%%%%%%
%
%
%%%%%
\begin{proof}
We may apply the Taylor series expansion of $\opLog\Gamma(1+z)$ 
\begin{align}\label{eq1.proof.lem.Gamma.Delta.asymptot.}
\opLog\Gamma(1+z) 
= \gamma (-z) 
+ \sum_{k=2}^\infty \frac{\zeta(k)}{k} (-z)^k, 
\quad 
\{|z|<1\}. 
\end{align}
Here $\zeta(k)=\sum_{m=1}^\infty m^{-k}$ is the Riemann zeta function. One can prove \eqref{eq1.proof.lem.Gamma.Delta.asymptot.} using 
\begin{align*}
z\Gamma(z) = \Gamma(z+1), \qquad 
\frac{1}{\Gamma(z)} 
= ze^{\gamma z} \prod_{m=1}^\infty \Big(1+\frac{z}{m}\Big) e^{-\frac{z}{m}}. 
\end{align*}
Indeed, from 
\begin{align*}
\opLog\Gamma(1+z)
&= \opLog z - \opLog \frac{1}{\Gamma(z)} \\
&= 
\gamma (-z)
- \sum_{m=1}^\infty
\bigg(
\opLog \Big(1+\frac{z}{m}\Big) 
- \frac{z}{m} 
\bigg)
\end{align*}
and the Taylor series expansion
\begin{align*}
\opLog (1+z) 
= -\sum_{k=1}^\infty 
\frac{1}{k} 
(-z)^{k}, 
\quad 
\{|z|<1\}, 
\end{align*}
we see that 
\begin{align*}
\opLog\Gamma(1+z)
= 
\gamma (-z) 
+ \sum_{m=1}^\infty \sum_{k=2}^\infty 
\frac{1}{k} 
\Big(-\frac{z}{m}\Big)^k, 
\end{align*}
which leads to \eqref{eq1.proof.lem.Gamma.Delta.asymptot.} after change of order of summations.

The expansion \eqref{est1.lem.Gamma.Delta.asymptot.} is a direct consequence of \eqref{eq1.proof.lem.Gamma.Delta.asymptot.}. Also, by 
\begin{align*}
\opLog \Gamma\Big(1-\frac{\delta}4-\frac{\eta_n}2\Big) 
&= 
\frac{\gamma}{2} \Big(\frac{\delta}2 + \eta_n\Big)
+ O\Big(\Big|\frac{\delta}2 + \eta_n\Big|^2\Big), \\
\opLog \Gamma\Big(1-\frac{\delta}4+\frac{\eta_n}2\Big) 
&= \frac{\gamma}{2} \Big(\frac{\delta}2 - \eta_n\Big)
+ O\Big(\Big|\frac{\delta}2 - \eta_n\Big|^2\Big) 
\end{align*}
and the definition of $\Delta(\delta,\eta_n)$ in \eqref{def.Delta}, we have 
\begin{align*}
\opLog \Delta(\delta,\eta_n)
&= \opLog \Gamma\Big(1-\frac{\delta}4-\frac{\eta_n}2\Big) 
+ \opLog \Gamma\Big(1-\frac{\delta}4+\frac{\eta_n}2\Big) \\
&= \gamma \Big(\frac{\delta}2\Big) 
+  O\Big(\Big|\frac{\delta}2 + \eta_n\Big|^2 + \Big|\frac{\delta}2 - \eta_n\Big|^2\Big), 
\end{align*}
which implies \eqref{est2.lem.Gamma.Delta.asymptot.}. If $\delta\ge0$, we see from Lemma \ref{lem.eta.asymptot.} that, for sufficiently small $\alpha,\delta$, 
\begin{align*}
\Big(\frac{\delta}2\Big)^2 + |\eta_n|^2
= 
\Big|\frac{\delta}2+\eta_n\Big|^2 
- \delta \Re \eta_n
\le 
\Big|\frac{\delta}2+\eta_n\Big|^2, 
\end{align*}
which implies \eqref{est3.lem.Gamma.Delta.asymptot.}. This completes the proof. 
\end{proof}
%%%%%
%

The following is the key technical lemma in the proof of Proposition \ref{prop.est.Fn} below.

%
%%%%%%%%%%
\begin{lemma}\label{lem.lowerbd}
Let $\ep\in(0,\frac{\pi}2)$. Suppose that $\zeta\in\C$ with $|\zeta|\ll1$ satisfies 
\begin{align}\label{cond1.lem.lowerbd}
\Re\zeta>0, \qquad |\Im\zeta|>0 
\end{align}
and 
\begin{align}\label{cond2.lem.lowerbd}
\Big\{
\Re\zeta 
+ (1+\kappa) \frac{(\Im\zeta)^2}{\Re\zeta}
\Big\}
\Big( \frac{\pi}{2} - \ep \Big)
< \pi 
\end{align}
with some constant $\kappa=\kappa(\ep)\in(0,\frac12)$ independent of $\zeta$. Then, by defining 
\begin{align}\label{def.K.lem.lowerbd}
K(\zeta) 
= 
\min\bigg\{
\Big\{
\frac{(\Re\zeta)^2}{|\Im\zeta|}
+ |\Im\zeta| 
\Big\}, 
\Re\zeta
\bigg\}, 
\end{align}
one has 
\begin{align}\label{est.lem.lowerbd}
|1 - w^{\zeta}| 
\ge
C \min
\Big\{
1, 
K(\zeta)
\big|\log|w|\big|
\Big\}, 
\quad 
w\in\Sigma_{\frac{\pi}2-\ep} \cap \{|z|<1\}.
\end{align}
The constant $C$ depends only on $\ep$ and $\kappa$. 
\end{lemma}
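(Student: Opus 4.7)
I would write $w = e^{-L+i\theta}$ with $L = -\log|w|>0$ and $|\theta| < \pi/2 - \ep$, and $\zeta = a+ib$ with $a>0$, $b\neq 0$. Then $w^\zeta = e^{-u+iv}$ where $u = aL + b\theta$ and $v = a\theta - bL$, and the identity
\[
|1 - w^\zeta|^2 = (1 - e^{-u})^2 + 4\,e^{-u}\sin^2(v/2)
\]
holds. The map $(L,\theta)\mapsto(u,v)$ has matrix $\bigl(\begin{smallmatrix}a & b \\ -b & a\end{smallmatrix}\bigr)$, a rotation scaled by $|\zeta|$, so $u^2+v^2 = |\zeta|^2(L^2+\theta^2)\geq a^2L^2$. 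By AM-GM, $(\Re\zeta)^2/|\Im\zeta|+|\Im\zeta|\geq 2\Re\zeta$, so $K(\zeta)=a$, and the claim reduces to showing $|1-w^\zeta|\geq C\min\{1,aL\}$.

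\textbf{Principal case $|u|\leq 1$.} For $|u|\geq 1$, the identity gives $|1-w^\zeta|\geq 1-e^{-1}$, which dominates $\min\{1, aL\}$. So assume $|u|\leq 1$, in which case $|1-e^{-u}|\geq (1-e^{-1})|u|$ and $e^{-u}\geq e^{-1}$, yielding
\[
|1-w^\zeta|^2\geq C_0\bigl(u^2+\sin^2(v/2)\bigr).
\]
Eliminating $L = (u-b\theta)/a$ gives $v = (a+b^2/a)\theta-(b/a)u$. The hypothesis rearranges as $(a+b^2/a)(\pi/2-\ep)<\pi-\kappa b^2(\pi/2-\ep)/a$, tightly bounding $|v|$. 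In the favourable sub-case $|v|\leq\pi$, the estimate $|\sin(v/2)|\geq|v|/\pi$ combined with $u^2+v^2\geq a^2L^2$ directly gives $|1-w^\zeta|\geq C_1\, aL$.

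\textbf{The delicate sub-case $|v|>\pi$ and main obstacle.} Here $\sin^2(v/2)$ can degenerate as $v$ approaches integer multiples $\pm 2\pi k$ with $k\geq 1$. A direct computation shows that $v=\mp 2\pi k$, $|\theta|<\pi/2-\ep$, and $|u|\leq 1$ are compatible only when
\[
|u|\;\geq\;\frac{(2k-1)\pi a+\kappa b^2(\pi/2-\ep)}{|b|}\;\geq\;2\sqrt{(2k-1)\pi\kappa(\pi/2-\ep)\,a}
\]
by AM-GM, so the $u^2$ contribution alone produces a lower bound of order $(2k-1)a$. Combining this with the quadratic lower bound on $\sin^2(v/2)$ in terms of the deficit $2\pi k-|v|$, and tracking that $aL\approx 2\pi k a/|b|$ in this regime, one recovers $|1-w^\zeta|\geq C\min\{1,aL\}$. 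The main obstacle is patching these contributions uniformly across all admissible $k$: the slack encoded in the factor $(1+\kappa)$ of the hypothesis is precisely what guarantees a constant $C$ depending only on $\ep$ and $\kappa$.
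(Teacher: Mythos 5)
Your reduction is correct and genuinely different from the paper's argument: the observation that $\frac{(\Re\zeta)^2}{|\Im\zeta|}+|\Im\zeta|\ge 2\Re\zeta$, so that $K(\zeta)=\Re\zeta=a$, is a nice simplification, and your first two cases ($|u|\ge1$; $|u|\le1$ with $|v|\le\pi$, using $u^2+v^2\ge|\zeta|^2(L^2+\theta^2)\ge a^2L^2$) are complete and correct. The paper instead splits on whether $|u|\le\kappa\,|\Im\zeta|\,|\oparg w|$ or not: in the first regime condition \eqref{cond2.lem.lowerbd} forces $|v|<\pi$ outright, so the degeneracy of $\sin(v/2)$ at nonzero multiples of $2\pi$ never arises, and in the second regime the modulus alone gives $|1-e^{-u}|\ge e^{-1}\min\{1,|u|\}$ with $|u|\ge\frac{\kappa}{2}aL$; that is why the paper's $K$ appears as a minimum of two expressions, one per case.

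There is, however, a gap in your third sub-case, and the specific route you emphasize would not close it. After the AM-GM step your lower bound is $u^2\gtrsim(2k-1)a$, but the bound you must beat is $\min\{1,(aL)^2\}$ with $aL\approx 2\pi k a/|b|$; when $aL$ is of order one (take $\theta=0$, $L=2\pi k/|b|$ with $k\approx|b|/(2\pi a)$, so $v=-2\pi k$ exactly and $u=aL\approx1$), the required bound is of order $1$ while $(2k-1)a\approx|b|/\pi\ll1$. So the "order $(2k-1)a$" estimate, together with deficit bookkeeping in $\sin^2(v/2)$, cannot certify the inequality uniformly in $k$, and the uniform patching you flag as the main obstacle is exactly where the sketch fails. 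The fix stays inside your framework: do not weaken by AM-GM. From $v=(a+\frac{b^2}{a})\theta-\frac{b}{a}u$ and \eqref{cond2.lem.lowerbd} you have $\frac{|b|}{a}|u|\ge|v|-\pi$ for all $w$ in the sector, while $|b|L\le|v|+\frac{\pi a}{2}$. Hence if $|v|\ge\frac{3\pi}{2}$ then $|u|\ge\frac{a}{|b|}\,\frac{|v|}{3}\ge c\,aL$, and if $\pi<|v|<\frac{3\pi}{2}$ then $\sin^2(v/2)\ge\frac12$; together with your first two cases this yields \eqref{est.lem.lowerbd} with a constant depending only on $\ep$ (in this completion the $\kappa$-slack is not even needed, whereas in the paper's dichotomy it is what keeps $|v|<\pi$ in the delicate case). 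No analysis near individual multiples $2\pi k$ is required.
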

%%%%%%%%%%
%
%
%%%%%
\begin{proof}
By setting 
\begin{align}
\mu&= (\Re\zeta) \log|w| - (\Im\zeta) \oparg w, 
\label{def.mu.proof.lem.lowerbd} \\
\theta&= (\Im\zeta) \log|w| + (\Re\zeta) \oparg w, 
\label{def.theta.proof.lem.lowerbd}
\end{align}
we denote 
$$
1 - w^{\zeta} = 1 - e^{\mu} e^{i\theta}.
$$
From 
$$
\log |w| 
= \frac{\Im\zeta}{\Re\zeta} \oparg w 
+ \frac{1}{\Re\zeta} \mu, 
$$
we compute 
\begin{align}\label{theta.proof.lem.lowerbd}
\theta 
= \Big\{
\Re\zeta 
+ \frac{(\Im\zeta)^2}{\Re\zeta}
\Big\} \oparg w 
+ \frac{\Im\zeta}{\Re\zeta} \mu. 
\end{align}
Before going into details, let us explain the difficulties. When $\mu$ is close to zero, one essentially needs to provide lower bounds of  $|1-e^{i\theta}|$. However, such bounds require good control of $\theta$, since $1-e^{i\theta}$ vanishes when $\theta=2m\pi$ with $m\in\Z$. The reason why the conditions \eqref{cond1.lem.lowerbd}--\eqref{cond2.lem.lowerbd} are needed is to control the range of $\theta$ when $\mu$ is close to zero.

We will consider two cases:

(i) Case $|\mu| \le \kappa |\Im \zeta| |\oparg w|$. In this case, we have 
$$
\frac12 \le e^{\mu} \le \frac32.
$$
In addition, by \eqref{theta.proof.lem.lowerbd} and the assumption \eqref{cond2.lem.lowerbd}, 
$$
|\theta|
\le  
\Big\{
\Re\zeta 
+ (1+\kappa) \frac{(\Im\zeta)^2}{\Re\zeta}
\Big\}
|\oparg w| 
<\pi,
\quad 
w\in\Sigma_{\frac{\pi}2-\ep} \cap \{|z|<1\}.
$$
Thus $e^{i\theta}$ is equal to $1$ if and only if $\theta=0$. If $0\le |\theta|< \frac{\pi}2$, the imaginary part gives 
\begin{align}\label{lowerbd.im.proof.lem.lowerbd}
|1 - e^{\mu} e^{i\theta}|
\ge
e^{\mu} |\sin\theta| 
\ge 
e^{\mu} \frac{2}{\pi} |\theta|
\ge 
\frac{1}{\pi} |\theta|. 
\end{align}
If $\frac{\pi}2 \le |\theta|< \pi$, the real part gives 
\begin{align}\label{lowerbd.re.proof.lem.lowerbd}
|1 - e^{\mu} e^{i\theta}| 
\ge
|1 - e^{\mu} \cos\theta| 
\ge 
1 
>
\frac{1}{\pi} |\theta|. 
\end{align}
Hence we estimate $|\theta|$. Combining $|\mu| \le \kappa |\Im \zeta| |\oparg w|$ with \eqref{theta.proof.lem.lowerbd}, we have 
$$
|\theta|
\ge 
\Big\{
\Re\zeta + (1-\kappa) \frac{(\Im\zeta)^2}{\Re\zeta}
\Big\} 
|\oparg w|. 
$$
Combining with \eqref{def.mu.proof.lem.lowerbd}, 
\begin{align*}
\Re\zeta \big|\log|w|\big|
\le 
|\Im\zeta| |\oparg w| + |\mu|
\le 
(1+\kappa) |\Im\zeta| |\oparg w|. 
\end{align*}
By these two estimates, we obtain 
\begin{align*}
\begin{split}
|\theta|
&\ge 
\Big\{
\Re\zeta + (1-\kappa) \frac{(\Im\zeta)^2}{\Re\zeta}
\Big\} 
\frac{\Re\zeta \big|\log|w|\big|}{(1+\kappa) |\Im\zeta|} \\
&\ge 
\frac{1-\kappa}{1+\kappa} 
\Big\{
\frac{(\Re\zeta)^2}{|\Im\zeta|}
+ |\Im\zeta| 
\Big\} 
\big|\log|w|\big|. 
\end{split}
\end{align*}
Therefore, from \eqref{lowerbd.im.proof.lem.lowerbd} and \eqref{lowerbd.re.proof.lem.lowerbd}, we see that 
\begin{align}\label{lowerbd1.proof.lem.lowerbd}
|1 - e^{\mu} e^{i\theta}|
\ge
\frac{1}{\pi} 
\frac{1-\kappa}{1+\kappa} 
\Big\{
\frac{(\Re\zeta)^2}{|\Im\zeta|}
+ |\Im\zeta| 
\Big\} 
\big|\log|w|\big|. 
\end{align}

(ii) Case $|\mu| > \kappa |\Im \zeta| |\oparg w|$. In this case, we may rely on 
\begin{align}\label{lowerbd.e.proof.lem.lowerbd}
|1 - e^{\mu} e^{i\theta}|
\ge
|1 - e^{\mu}|
\ge
e^{-1} \min\{1, |\mu|\}, 
\quad
\mu,\theta\in\R. 
\end{align}
We deduce that if $|\oparg w| > \frac12 \frac{\Re\zeta}{|\Im\zeta|} \big|\log |w|\big|$, 
$$
|\mu| > \frac{\kappa}2 \Re\zeta \big|\log |w|\big| 
$$
by $|\mu| > \kappa |\Im \zeta| |\oparg w|$, and that if $|\oparg w| \le \frac12 \frac{\Re\zeta}{|\Im\zeta|} \big|\log |w|\big|$, 
$$
|\mu| 
\ge 
\Re\zeta \big|\log|w|\big| 
- |\Im\zeta| |\oparg w|
\ge 
\frac12 \Re\zeta \big|\log|w|\big|. 
$$
by \eqref{def.mu.proof.lem.lowerbd}. Combining these two with \eqref{lowerbd.e.proof.lem.lowerbd}, we obtain 
\begin{align}\label{lowerbd2.proof.lem.lowerbd}
|1 - e^{\mu} e^{i\theta}|
\ge 
e^{-1} \min
\Big\{
1, \frac{\kappa}2 \Re\zeta \big|\log |w|\big| \Big\}. 
\end{align}
The assertion follows from \eqref{lowerbd1.proof.lem.lowerbd} and \eqref{lowerbd2.proof.lem.lowerbd}. The proof is complete. 
\end{proof}
%%%%%
%

%
%%%%%%%%%%
\begin{proposition}\label{prop.est.Fn}
Let $|n|=1$ and $\ep\in(0,\frac{\pi}2)$. Let $K(\zeta)$ be defined in \eqref{def.K.lem.lowerbd}. 
For sufficiently small $(\alpha,\delta)\in\R^\ast\times\R_{\ge0}$, we have 
\begin{align}\label{est1.prop.est.Fn}
\begin{split}
\Big|\Big(\frac{\delta}{2}+\eta_n\Big) F_n(z)\Big| 
\ge
C |z|^{-1-\Re\eta_n} 
\min
\Big\{
1, K\Big(\frac{\delta}{2}+\eta_n\Big) \big|\log|z|\big|
\Big\}, & \\
z\in \Sigma_{\frac{\pi}2-\ep}
\cap \Big\{|z|<K\Big(\frac{\delta}{2}+\eta_n\Big)\Big\}.& 
\end{split}
\end{align}
The constant $C$ depends only on $\ep$. 
\end{proposition}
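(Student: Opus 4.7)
Starting from the expansion in Corollary~\ref{cor.lem.Fn.2}, factor the two explicit terms to write
\begin{equation*}
\Big(\frac{\delta}{2}+\eta_n\Big) F_n(z)
= \frac{\Gamma(1+\eta_n)}{2}\Big(\frac{z}{2}\Big)^{-1-\eta_n}\bigg[1 - \frac{\Delta(\delta,\eta_n)}{\Gamma(1+\eta_n)}\Big(\frac{z}{2}\Big)^{\frac{\delta}{2}+\eta_n}\bigg] + R^{(3)}_n(z).
\end{equation*}
Set $\zeta := \frac{\delta}{2}+\eta_n$. By \eqref{est3.lem.Gamma.Delta.asymptot.} (where the hypothesis $\delta\ge0$ is crucial to absorb the error into $O(|\zeta|^2)$), one has $\Delta(\delta,\eta_n)/\Gamma(1+\eta_n)=\exp(\gamma\zeta + O(|\zeta|^2))$, so the bracket rewrites as $1-w^\zeta$ with $w:=(z/2)\exp(\gamma + O(|\zeta|))$. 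In particular $|w|\asymp|z|$ and $\oparg w = \oparg z + O(|\zeta|)$, so $w\in\Sigma_{\pi/2-\ep/2}$ whenever $z\in\Sigma_{\pi/2-\ep}$ and $(\alpha,\delta)$ is small enough.

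The core step is to apply Lemma~\ref{lem.lowerbd} to $1-w^\zeta$. By Lemma~\ref{lem.eta.asymptot.}, $\Re\zeta = \delta/2 + (\alpha^2+\delta^2)/8 + O(\text{higher order})>0$ under $\delta\ge 0$ and $\alpha\ne 0$, and $\Im\zeta = \opsgn(\alpha n)\,|\alpha|/2 + O(|\alpha|^3)\ne 0$, so \eqref{cond1.lem.lowerbd} holds. The delicate condition is \eqref{cond2.lem.lowerbd}, which hinges on the bound
\begin{equation*}
\frac{(\Im\zeta)^2}{\Re\zeta} \;=\; \frac{\alpha^2/4 + O(\alpha^4)}{\delta/2 + (\alpha^2+\delta^2)/8 + O(\cdot)} \;\le\; 2 + o(1).
\end{equation*}
The value $2$ is attained precisely when $\delta=0$ and is strictly improved as soon as $\delta>0$; this is the quantitative manifestation of the ``stabilizing effect'' alluded to in Remark~\ref{rem.thm.L2L2}, and is precisely where the sign hypothesis is used. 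I then fix $\kappa=\kappa(\ep)\in(0,\tfrac12)$ with $(1+\kappa)\cdot 2\cdot(\tfrac{\pi}{2}-\tfrac{\ep}{2})<\pi$, equivalently $\kappa<\ep/(\pi-\ep)$, and shrink $|\alpha|+\delta$ so that the additive summand $\Re\zeta\cdot(\pi/2-\ep/2)$ is negligible; this verifies \eqref{cond2.lem.lowerbd}.

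Lemma~\ref{lem.lowerbd} then produces $|1-w^\zeta|\ge C\min\{1,K(\zeta)|\log|w||\}\ge C'\min\{1,K(\zeta)|\log|z||\}$, the last step because $\log|w|=\log|z|+O(1)$ for $|z|\ll 1$. Combined with $|\Gamma(1+\eta_n)|\asymp 1$ and $|(z/2)^{-1-\eta_n}|\asymp|z|^{-1-\Re\eta_n}$, the explicit part is bounded below by $C|z|^{-1-\Re\eta_n}\min\{1,K(\zeta)|\log|z||\}$. Against this bound, the relative contribution of the remainder is at most
\begin{equation*}
\frac{C|z|^2(1+|\log|z||)}{\min\{1,K(\zeta)|\log|z||\}},
\end{equation*}
which is $o(1)$ for $|z|<K(\zeta)$ when $K(\zeta)$ is small: on the subregion $K(\zeta)|\log|z||\le 1$ the ratio is at most $C|z|^2/K(\zeta)\le CK(\zeta)$, while on the complementary subregion $|z|\le e^{-1/K(\zeta)}$ and $|z|^2(1+|\log|z||)$ decays even faster. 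Absorbing this smallness into the constant yields \eqref{est1.prop.est.Fn}. I expect the main obstacle to be the quantitative verification of \eqref{cond2.lem.lowerbd}, which is exactly where the sign assumption $\delta\ge 0$ plays its decisive role.
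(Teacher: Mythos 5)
Your proposal is correct and follows essentially the same route as the paper's proof: factor out $\frac{\Gamma(1+\eta_n)}{2}(z/2)^{-1-\eta_n}$ in the expansion of Corollary \ref{cor.lem.Fn.2}, use \eqref{est3.lem.Gamma.Delta.asymptot.} (where $\delta\ge0$ enters) to rewrite the bracket as $1-w^{\zeta}$ with $\zeta=\frac{\delta}{2}+\eta_n$, verify \eqref{cond1.lem.lowerbd}--\eqref{cond2.lem.lowerbd} via Lemma \ref{lem.eta.asymptot.} and the bound $(\Im\zeta)^2/\Re\zeta\le 2+o(1)$ with $\kappa<\ep/(\pi-\ep)$, apply Lemma \ref{lem.lowerbd}, and absorb the remainder $R^{(3)}_n$ on $\{|z|<K(\zeta)\}$. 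Your final absorption argument is in fact slightly more explicit than the paper's, which leaves that step to the reader.
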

%%%%%%%%%%
%
%
%%%%%%%%%%
\begin{remark}\label{rem.prop.est.Fn}
One observes a sort of stabilizing effect by the flow $\delta W$ from this proposition. By the definition \eqref{def.K.lem.lowerbd} and Lemma \ref{lem.eta.asymptot.}, we have a simple (but rough) estimate from below 
\begin{align}\label{est1.rem.prop.est.Fn}
\begin{split}
K\Big(\frac{\delta}{2}+\eta_n\Big) 
&\ge 
\min
\Big\{\Big|\Im\Big(\frac{\delta}{2}+\eta_n\Big)\Big|, 
\Re\Big(\frac{\delta}{2}+\eta_n\Big)
\Big\} \\
&\ge 
\frac18 \min\{|\alpha|,\delta+\alpha^2\}. 
\end{split}
\end{align}
The second inequality is valid for sufficiently small $\alpha,\delta$. Therefore, the radius of the disks on which $F_n(z)$ has no zeros is greater than that for $\delta=0$. This is interpreted as a stabilizing effect by $\delta W$ in time frequency near zero related to large-time behavior of flows.
\end{remark}
%%%%%%%%%%
%
%
%%%%%
\begin{proof}
Let $z\in \Sigma_{\frac{\pi}2-\ep} \cap \{|z|<\frac12\}$ first. Using Corollary \ref{cor.lem.Fn.2}, we write
\begin{align}\label{eq1.proof.prop.est.Fn}
\Big(\frac{\delta}{2}+\eta_n\Big)
F_n(z)
&= 
\frac{\Gamma(1+\eta_n)}{2} \Big(\frac{z}{2}\Big)^{-1-\eta_n} 
\Big\{
1 
- 
\frac{\Delta(\delta,\eta_n)}{\Gamma(1+\eta_n)} \Big(\frac{z}2\Big)^{\frac{\delta}2+\eta_n} 
+ R_n(z) 
\Big\}. 
\end{align}
Here  
$$
R_n(z) 
= \frac{2}{\Gamma(1+\eta_n)}
\Big(\frac{z}{2}\Big)^{1+\eta_n} R^{(3)}_n(z) 
$$
is the remainder and satisfies 
\begin{align}\label{est1.proof.prop.est.Fn}
|R_n(z)| 
\le 
C|z|^{2} \big|\log|z|\big|, 
\quad 
z \in\Sigma_{\frac{\pi}2} 
\cap 
\Big\{|z|<\frac12\Big\}. 
\end{align}
The condition $\delta\ge0$ and Lemma \ref{lem.Gamma.Delta.asymptot.} imply 
$$
\frac{\Delta(\delta,\eta_n)}{\Gamma(1+\eta_n)} 
= e^{\opLog \Delta(\delta,\eta_n) 
- \opLog \Gamma(1+\eta_n)}
= e^{\gamma_n (\frac{\delta}2+\eta_n)}, 
$$
where $\gamma_n=\gamma_n(\delta,\eta_n)$ is the function satisfying 
$$
\gamma_n 
= 
\gamma 
+ O\Big(\Big|\frac{\delta}2+\eta_n\Big|\Big). 
$$

Setting 
$$
\zeta_n = \frac{\delta}2+\eta_n, 
\qquad w_n = \frac{e^{\gamma_n}}2 z, 
$$
we will derive a lower bound of 
$$
1 
- 
\frac{\Delta(\delta,\eta_n)}{\Gamma(1+\eta_n)} \Big(\frac{z}2\Big)^{\frac{\delta}2+\eta_n} 
= 1 - w_n^{\zeta_n}. 
$$
To apply Lemma \ref{lem.lowerbd}, we check that all the conditions are fulfilled by $\zeta_n, w_n$. We have 
$$
|w_n| 
\le 1, 
\qquad
|\oparg w_n| 
\le \frac{\pi}2-\frac{\ep}2 
$$
for sufficiently small $\alpha,\delta$. We also have \eqref{cond1.lem.lowerbd} by Lemma \ref{lem.eta.asymptot.}. By the same lemma, there are constants $C_1,C_2$ independent of $\alpha,\delta$ such that 
$$
\frac{(\Im\zeta_n)^2}{\Re\zeta_n} 
\le
\Big\{
\frac{|\alpha|}{2}
+ C_1 |\alpha| (\alpha^2 + \delta^2)\Big\}^2 
\Big\{\frac{\alpha^2 + \delta^2}{8} - C_2 (\alpha^4+\delta^4)\Big\}^{-1}. 
$$
Thus, for sufficiently small $\alpha,\delta$, we have 
$$
\frac{(\Im\zeta_n)^2}{\Re\zeta_n} 
= 2 + O(\alpha^2 + \delta^2) 
$$
and, with a constant $\kappa_n=\kappa_n(\ep)\in(0,\frac12)$ independent of $\alpha,\delta$, 
$$
\Big\{
\Re\zeta_n 
+ (1+\kappa_n) \frac{(\Im\zeta_n)^2}{\Re\zeta_n}
\Big\}
\Big(\frac{\pi}2-\frac{\ep}2\Big)
< \pi, 
$$
which is \eqref{cond2.lem.lowerbd} with $\ep$ replaced by $\frac\ep2$. Now, applying Lemma \ref{lem.lowerbd}, we see that  
\begin{align*}
|1 - w_n^{\zeta_n}| 
&\ge
C \min
\Big\{
1, 
K(\zeta_n)
\big|\log|w_n^{\zeta_n}|\big|
\Big\} \\
&\ge
C \min
\Big\{
1, 
K(\zeta_n)
\big|\log|z|\big|
\Big\},
\quad 
z\in \Sigma_{\frac{\pi}2-\ep} 
\cap 
\Big\{|z|<\frac12\Big\}, 
\end{align*}
for sufficiently small $\alpha,\delta$. The constant $C$ depends only on $\ep$.

Therefore, combining this estimate with \eqref{eq1.proof.prop.est.Fn} and \eqref{est1.proof.prop.est.Fn}, we obtain a lower bound 
\begin{align}\label{est2.proof.prop.est.Fn}
\begin{split}
|\zeta_n F_n(z)| 
\ge
C |z|^{-1-\Re\eta_n} 
\Big(
\min
\Big\{
1, 
K(\zeta_n)
\big|\log|z|\big|
\Big\} 
- |z|^{2} \big|\log|z|\big|
\Big), 
\end{split}
\end{align}
which implies the desired estimate \eqref{est1.prop.est.Fn}. The proof is complete. 
\end{proof}

We state two corollaries to this proposition. The first one gives a simpler version of \eqref{est1.prop.est.Fn} useful for later calculation. The second one uses the results in Section \ref{sec.spec.anal.}. 
%
%%%%%%%%%%
\begin{corollary}\label{cor1.prop.est.Fn}
Let $|n|=1$ and $\ep\in(0,\pi)$. 
For sufficiently small $(\alpha,\delta)\in\R^\ast\times\R_{\ge0}$, we have 
\begin{align*}
\Big|
\Big(\frac{\delta}{2}+\eta_n\Big)
F_n(\sqrt{\lambda})
\Big| 
\ge
C |\lambda|^{-\frac{\Re\xi_n}{2}}
\min
\big\{
1, \alpha^2 \big|\log|\lambda|\big|
\big\},& \\
\quad 
\lambda\in \Sigma_{\pi-\ep}\cap \{|z|<\alpha^4\}.&
\end{align*}
In particular, 
$$
\frac{1}{|F_n(\sqrt{\lambda})|}
\le 
C 
\Big|
\frac1\alpha\Big(\frac{\delta}{2}+\eta_n\Big)
\Big|^{-1} 
|\lambda|^{\frac{\Re\xi_n}{2}}, 
\quad \lambda\in \Sigma_{\pi-\ep}
\cap 
\Big\{|z|<e^{-\frac{1}{4|\alpha|}}\Big\}. 
$$
The constant depends only on $\ep$. 
\end{corollary}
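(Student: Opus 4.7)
My plan is to deduce both inequalities directly from Proposition \ref{prop.est.Fn} via the change of variable $z=\sqrt\lambda$, combined with a quantitative lower bound for $K(\frac{\delta}{2}+\eta_n)$ in terms of $\alpha^2$. The argument is essentially bookkeeping, and I do not anticipate a serious technical obstacle.

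First I would translate the geometric data. Since $|\arg\sqrt\lambda|=\tfrac12|\arg\lambda|$, the sector $\Sigma_{\pi-\ep}$ in $\lambda$ pulls back to $\Sigma_{\pi/2-\ep/2}$ in $z$, so I apply Proposition \ref{prop.est.Fn} with parameter $\ep/2$ in place of $\ep$. The exponent transforms via $\xi_n=1+\eta_n$ into $|\sqrt\lambda|^{-1-\Re\eta_n}=|\lambda|^{-\Re\xi_n/2}$, and the logarithm becomes $|\log|\sqrt\lambda||=\tfrac12|\log|\lambda||$.

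Next I would establish the uniform lower bound $K(\frac{\delta}{2}+\eta_n)\ge c\alpha^2$ needed to rewrite the proposition's bound in terms of $\alpha^2$. Setting $\zeta_n=\frac{\delta}{2}+\eta_n$, Lemma \ref{lem.eta.asymptot.} together with the sign hypothesis $\delta\ge0$ yields, for $(\alpha,\delta)$ sufficiently small, $\Re\zeta_n\ge\tfrac{\alpha^2}{16}$ and $|\Im\zeta_n|\ge\tfrac{|\alpha|}{4}$. Since $\alpha^2\ll|\alpha|$ in this regime, both $\Re\zeta_n$ and $\frac{(\Re\zeta_n)^2}{|\Im\zeta_n|}+|\Im\zeta_n|$ exceed a constant multiple of $\alpha^2$, so $K(\zeta_n)\ge c\alpha^2$. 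This bound plays two roles: it verifies the disk inclusion $\{|\lambda|<\alpha^4\}\subset\{|\sqrt\lambda|<K(\zeta_n)\}$ (perhaps after adjusting the numerical factor in front of $\alpha^4$, which can be absorbed into the smallness threshold on $(\alpha,\delta)$), so that Proposition \ref{prop.est.Fn} applies; and it delivers the inequality $\min\{1,K(\zeta_n)|\log|\sqrt\lambda||\}\ge c'\min\{1,\alpha^2|\log|\lambda||\}$, which together with the translated exponent gives the first estimate.

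For the ``in particular'' statement, I would shrink the disk further to $\{|\lambda|<e^{-1/(4|\alpha|)}\}$, where $|\log|\lambda||>\frac{1}{4|\alpha|}$ and hence $\alpha^2|\log|\lambda||>\tfrac{|\alpha|}{4}$. Then $\min\{1,\alpha^2|\log|\lambda||\}\ge\tfrac{|\alpha|}{4}$ for small $|\alpha|$, so the first estimate specializes to $|\zeta_n F_n(\sqrt\lambda)|\ge C|\alpha||\lambda|^{-\Re\xi_n/2}$. Dividing by $|\zeta_n|$, taking reciprocals, and regrouping the resulting factor $\frac{|\zeta_n|}{|\alpha|}=\big|\tfrac{1}{\alpha}\zeta_n\big|$ yields the claimed bound on $1/|F_n(\sqrt\lambda)|$. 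The only mildly delicate point in the entire argument is keeping track of numerical constants in the disk inclusion of the previous step, since $K(\zeta_n)$ is comparable to, but not necessarily larger than, $\alpha^2$; any shortfall is handled by further restricting the smallness of $(\alpha,\delta)$.
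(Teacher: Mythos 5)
Your overall strategy (translate $z=\sqrt\lambda$, use Lemma \ref{lem.eta.asymptot.} with $\delta\ge0$ to bound $K(\tfrac{\delta}{2}+\eta_n)$ from below by a multiple of $\alpha^2$, then specialize to $|\lambda|<e^{-\frac{1}{4|\alpha|}}$ for the second bound) is the same as the paper's, but there is a genuine gap in the step where you invoke Proposition \ref{prop.est.Fn} as a black box. The final statement \eqref{est1.prop.est.Fn} is only valid on $\{|z|<K(\tfrac{\delta}{2}+\eta_n)\}$, and the inclusion $\{|\sqrt\lambda|<\alpha^2\}\subset\{|z|<K(\tfrac{\delta}{2}+\eta_n)\}$ that you need is false in general: for $\delta=0$ (which is allowed here) one has $K(\tfrac{\delta}{2}+\eta_n)=\Re\eta_n\approx\alpha^2/8<\alpha^2$, so the annulus $K\le|z|<\alpha^2$ is not covered. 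Your proposed repair --- ``further restricting the smallness of $(\alpha,\delta)$'' --- does not work, because the deficit is a fixed multiplicative constant (roughly $8$) independent of how small $\alpha$ is, and the radius $\alpha^4$ in the statement is not at your disposal to adjust. The paper avoids this entirely by citing not the statement of Proposition \ref{prop.est.Fn} but the intermediate estimate \eqref{est2.proof.prop.est.Fn} from its proof, which holds on all of $\Sigma_{\frac{\pi}{2}-\ep}\cap\{|z|<\tfrac12\}$ with the explicit remainder $-|z|^2\big|\log|z|\big|$; combined with \eqref{est1.rem.prop.est.Fn} ($K\ge\tfrac18\min\{|\alpha|,\delta+\alpha^2\}\ge\tfrac18\alpha^2$), on $\{|z|<\alpha^2\}$ the remainder is at most $C\alpha^2$ times the main term $\min\{1,K|\log|z||\}$ and can be absorbed using the smallness of $\alpha$ alone. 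If you insist on quoting only the proposition's statement, you only obtain the first display on the smaller disk $\{|\lambda|<K^2\}$; note, though, that your argument for the ``in particular'' part is unaffected, since $e^{-\frac{1}{4|\alpha|}}\ll K(\tfrac{\delta}{2}+\eta_n)^2$ for small $\alpha$.

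One further remark on the last step: your regrouping produces the prefactor $\big|\tfrac1\alpha(\tfrac{\delta}{2}+\eta_n)\big|$, i.e.\ $|\zeta_n|/|\alpha|$, whereas the statement as printed carries the exponent $-1$ on that factor. Your computation is the correct one (it is exactly what \eqref{est2.proof.prop.est.Fn} together with $\min\{1,\alpha^2|\log|\lambda||\}\ge\tfrac{|\alpha|}{4}$ yields, and the inverted form would fail when $\delta$ is much larger than $\sqrt{|\alpha|}$); the printed exponent appears to be a misprint, and in the later application only an $(\alpha,\delta)$-dependent constant is used, so you should simply state the bound in the form your derivation gives rather than claim it coincides literally with the displayed one.
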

%%%%%%%%%%
%
%
%%%%%
\begin{proof}
The assertion follows from \eqref{est2.proof.prop.est.Fn} combined with the simple lower bound \eqref{est1.rem.prop.est.Fn}. 
\end{proof}
%%%%%
%

%
%%%%%%%%%%
\begin{corollary}\label{cor2.prop.est.Fn}
Let $\ep\in(0,\frac{\pi}4)$. For sufficiently small $(\alpha,\delta)\in\R^\ast\times\R_{\ge0}$, we have 
$$
\Sigma_{\frac34 \pi-\ep}
\subset 
\rho(-\mathbb{A}_{V}). 
$$ 
\end{corollary}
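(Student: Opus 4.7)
\medskip

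\noindent\textbf{Proof plan.}
The plan is to use the Fourier mode decomposition $\rho(-\mathbb{A}_V)=\bigcap_{n\in\Z}\rho(-\mathbb{A}_{V,n})$ from Proposition \ref{prop.spec.general} (\ref{item3.prop.spec.general}) and show $\Sigma_{\frac34\pi-\ep}\subset\rho(-\mathbb{A}_{V,n})$ for every $n\in\Z$. For $|n|\neq1$, this is immediate from Proposition \ref{prop.spec.small} (\ref{item3.prop.spec.small}), which already gives the larger set $\Sigma_{\frac34\pi}\subset\rho(-\mathbb{A}_{V,n})$. The nontrivial work is for $|n|=1$, and the idea is to cover $\Sigma_{\frac34\pi-\ep}$ by the union of two regions whose treatments rely on complementary results: a \textit{large-$|\lambda|$ region} handled by the numerical-range estimate in Proposition \ref{prop.spec.small} (\ref{item2.prop.spec.small}), and a \textit{small-$|\lambda|$ region} handled by the asymptotic non-vanishing of $F_n(\sqrt{\lambda})$ provided by Corollary \ref{cor1.prop.est.Fn} combined with the characterization in Proposition \ref{prop.charact.disc.ev}.

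More precisely, for $\lambda=re^{i\theta}\in\Sigma_{\frac34\pi-\ep}$, the elementary identity $|\Im\lambda|+\Re\lambda=r\sqrt{2}\sin(|\theta|+\tfrac{\pi}{4})$ (for $|\theta|\le\pi$) yields the lower bound $|\Im\lambda|+\Re\lambda\ge r\sqrt{2}\sin\ep$. Hence whenever $|\lambda|>R_\alpha:=\frac{4e\alpha^2}{\sqrt{2}\sin\ep}\,e^{-\frac{1}{4|\alpha|}}$, the point $\lambda$ belongs to the set $\Sigma_{\frac34\pi}+4e\alpha^2 e^{-\frac{1}{4|\alpha|}}$, and thus lies in $\rho(-\mathbb{A}_{V,n})$ by Proposition \ref{prop.spec.small} (\ref{item2.prop.spec.small}). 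For the remaining region $|\lambda|\le R_\alpha$, I would invoke Corollary \ref{cor1.prop.est.Fn} (with its parameter, say, $\ep'=\frac{\pi}{4}$, so that $\Sigma_{\frac34\pi-\ep}\subset\Sigma_{\pi-\ep'}$), which asserts $F_n(\sqrt{\lambda})\neq0$ on $\Sigma_{\pi-\ep'}\cap\{|\lambda|<e^{-\frac{1}{4|\alpha|}}\}$; since $\Im(\eta_n)\neq0$ for $\alpha\neq0$ by Lemma \ref{lem.eta.asymptot.}, the factor $\delta/2+\eta_n$ on the left of \eqref{est1.prop.est.Fn} is nonzero, so $F_n(\sqrt{\lambda})$ itself does not vanish there. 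Proposition \ref{prop.charact.disc.ev} (\ref{item2.prop.charact.disc.ev}) then rules out $\lambda\in\sigma_{{\rm disc}}(-\mathbb{A}_{V,n})$, and Proposition \ref{prop.spec.general} (\ref{item2.prop.spec.general}) gives $\lambda\in\rho(-\mathbb{A}_{V,n})$.

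The only thing to check is that the two regions cover $\Sigma_{\frac34\pi-\ep}$, i.e.\ $R_\alpha<e^{-\frac{1}{4|\alpha|}}$. Cancelling the exponential factor, this reduces to $4e\alpha^2<\sqrt{2}\sin\ep$, which holds for sufficiently small $|\alpha|$ depending on $\ep$. Taking the intersection of the resulting containments over all $n\in\Z$ finishes the proof. There is no real obstacle here: the work has already been done in Propositions \ref{prop.spec.small} and \ref{prop.charact.disc.ev} and in the asymptotic analysis of $F_n$ (Proposition \ref{prop.est.Fn} / Corollary \ref{cor1.prop.est.Fn}); this corollary is essentially a matter of assembling those estimates and verifying the overlap of the two parameter ranges.
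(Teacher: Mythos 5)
Your proposal is correct and follows essentially the same route as the paper: cover $\Sigma_{\frac34\pi-\ep}$ by a large-$|\lambda|$ region handled by the shifted-sector numerical-range result (Proposition \ref{prop.spec.small}) and a small-$|\lambda|$ region where Corollary \ref{cor1.prop.est.Fn} rules out zeros of $F_n$, hence discrete eigenvalues. The only differences are cosmetic — you argue mode-by-mode through Propositions \ref{prop.spec.general} and \ref{prop.charact.disc.ev} rather than via Corollary \ref{cor.prop.charact.disc.ev}, and you spell out the ``easy geometric consideration'' (the bound $|\Im\lambda|+\Re\lambda\ge\sqrt2|\lambda|\sin\ep$ and the overlap condition $4e\alpha^2<\sqrt2\sin\ep$) that the paper leaves implicit.
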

%%%%%%%%%%
%
%
%%%%%
\begin{proof}
In view of Proposition \ref{prop.spec.small} and Corollary \ref{cor1.prop.est.Fn}, we set 
\begin{align*}
\begin{split}
\mathcal{S}_1(\alpha)
&=
\Big(
\Sigma_{\frac34\pi} + 4e\alpha^2 e^{-\frac{1}{4|\alpha|}}
\Big)
\cap 
\Big\{|z| > 8e\alpha^2 e^{-\frac{1}{4|\alpha|}} \Big\}, \\
\mathcal{S}_2(\alpha)
&= \Sigma_{\frac34\pi} 
\cap
\Big\{|z|<e^{-\frac{1}{4|\alpha|}}\Big\}. 
\end{split}
\end{align*}
From Propositions \ref{prop.spec.general} (\ref{item1.prop.spec.general}) and \ref{prop.spec.small}, and Corollary \ref{cor.prop.charact.disc.ev}, we see that both  $\mathcal{S}_1(\alpha)$ and $\mathcal{S}_2(\alpha)$ are contained in $\rho(-\mathbb{A}_{V})$ for sufficiently small $\alpha,\delta$. For a given $\ep\in(0,\frac{\pi}4)$, by an easy geometric consideration, we find that $\Sigma_{\frac34 \pi-\ep}$ is contained in $\mathcal{S}_1(\alpha)\cup\mathcal{S}_1(\alpha)$ if $\alpha$ is small enough depending on $\ep$. This implies the assertion. 
\end{proof}
%%%%%
%

%%%%%%%%%%%%%%%%%%%%%%%%%%%%%%%%%%%%%%%%
%%%%%%%%%%%%%%%%%%%%%%%%%%%%%%%%%%%%%%%%
\section{Resolvent estimate}\label{sec.resol.est}
%%%%%%%%%%%%%%%%%%%%%%%%%%%%%%%%%%%%%%%%
%%%%%%%%%%%%%%%%%%%%%%%%%%%%%%%%%%%%%%%%

In this section, we estimate the solutions of 
\begin{equation}\tag{R}\label{eq.resol}
(\lambda + \mathbb{A}_{V}) v = f 
\end{equation}
for given $\lambda\in\rho(-{\mathbb A}_{V})$ and $f\in L^2_{\sigma}(\Omega)$. The main result is the following. 
%
%%%%%%%%%%
\begin{proposition}\label{prop.resol.LpLq}
Let $\ep\in(0,\frac{\pi}4)$ and let $(\alpha,\delta)\in\R^\ast\times\R_{\ge0}$ be sufficiently small. We have, for $q\in(1.2]$ and $f\in L^2_\sigma(\Omega) \cap L^q(\Omega)^2$, 
\begin{equation}\label{est1.prop.resol.LpLq}
\begin{split}
\|(\lambda+{\mathbb A}_{V})^{-1} f\|_{L^2} 
\le 
C |\lambda|^{-\frac32+\frac1q}
\|f\|_{L^q},
\quad 
\lambda\in \Sigma_{\frac34\pi-\ep} 
\end{split}
\end{equation}
and, for $f\in L^2_\sigma(\Omega)$, 
\begin{equation}\label{est2.prop.resol.LpLq}
\begin{split}
\|\nabla (\lambda+{\mathbb A}_{V})^{-1} f\|_{L^2} 
\le 
C |\lambda|^{-\frac12} 
\|f\|_{L^2},
\quad 
\lambda\in \Sigma_{\frac34\pi-\ep}. 
\end{split}
\end{equation}
The constant $C$ depends only on $\alpha,\delta,\ep,q$. 
\end{proposition}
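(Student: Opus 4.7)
The plan is to work Fourier mode by Fourier mode along \eqref{def.L2.sigma.n}: write $f=\sum_{n\in\Z} f_n$, produce $v_n$ solving \eqref{eq.resol.n} with explicit $\lambda$-dependent bounds, and recombine by orthogonality (plus duality to upgrade to the $L^q$-$L^2$ bound of \eqref{est1.prop.resol.LpLq} when $q<2$). In each mode I would reduce \eqref{eq.resol.n} to the scalar vorticity equation \eqref{eq.polar.vor} paired with the Biot-Savart reconstruction \eqref{def.Biot-Savart}. The fundamental pair of the homogeneous version of \eqref{eq.polar.vor}, built from $r^{-\delta/2} K_{\xi_n}(\sqrt\lambda r)$ and $r^{-\delta/2} I_{\xi_n}(\sqrt\lambda r)$ (Appendix \ref{app.hom.eq.vor}), furnishes an explicit Green's-function representation for $\omega_n$; then $v_n=\mathcal{V}_n[\omega_n]$ by Biot-Savart, and the no-slip condition $v_n|_{\partial\Omega}=0$ is encoded by the scalar identity $d_n[\omega_n]=0$ from Proposition \ref{prop.biot-savart}, which fixes the remaining free constant and, in the final formula for $v_n$, introduces the factor $F_n(\sqrt\lambda)^{-1}$ with $F_n$ as in \eqref{def.Fn}.

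For $|n|\neq 1$ the Hardy-type inequality \eqref{Hardy.ineq.nonzero} together with \eqref{Hardytype.ineq.W} yields $|\langle V^\bot\oprot v_n, v_n\rangle|\le C(|\alpha|+\delta)\|\nabla v_n\|_{L^2}^2$, so the transport term is absorbed into dissipation and standard sectorial estimates give $\|v_n\|_{L^2}\le C|\lambda|^{-1}\|f_n\|_{L^2}$ and $\|\nabla v_n\|_{L^2}\le C|\lambda|^{-1/2}\|f_n\|_{L^2}$ on $\Sigma_{\frac34\pi-\ep}$; the $L^q$-$L^2$ refinement for $q<2$ then follows from the explicit Bessel representation combined with Young-type convolution estimates, using that Hardy in these modes rules out the logarithmic kernel singularity present for the full Stokes resolvent. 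The delicate case is $|n|=1$, where the explicit formula for $v_n$ contains $F_n(\sqrt\lambda)^{-1}$ multiplying a source-dependent integral of $(\oprot f_n)_n$ against weighted combinations of $K_{\xi_n}$ and $I_{\xi_n}$. Corollary \ref{cor1.prop.est.Fn} supplies the lower bound
\[
\frac{1}{|F_n(\sqrt\lambda)|}\le C\Big|\frac{1}{\alpha}\Big(\frac{\delta}{2}+\eta_n\Big)\Big|^{-1}|\lambda|^{\Re\xi_n/2}
\]
on $\Sigma_{\pi-\ep}\cap\{|\lambda|<e^{-1/(4|\alpha|)}\}$, while for larger $|\lambda|$ we use Corollary \ref{cor2.prop.est.Fn} together with the large-argument asymptotics of $K_\nu$ (Lemma \ref{lem.est.bessel2}) applied directly to \eqref{def.Fn}.

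The main obstacle is tracking the powers of $|\lambda|$ so that the $|n|=1$ formula matches the target rates. The gain $|\lambda|^{\Re\xi_n/2}=|\lambda|^{1/2+O(\alpha^2+\delta^2)}$ from $1/F_n$ is just barely enough to offset the growth of the source-dependent Bessel integrals near the origin, where $I_{\xi_n}(\sqrt\lambda r)$ behaves like $(\sqrt\lambda r/2)^{\xi_n}/\Gamma(1+\xi_n)$; the integrals must be split at $r\sim|\lambda|^{-1/2}$, the inner and outer contributions estimated using the asymptotics collected in Appendix \ref{appendix.bessel}, and then reassembled. Once the mode-wise bounds are secured, Plancherel-type summation in $n$ gives \eqref{est2.prop.resol.LpLq}, and the $L^q$-$L^2$ bound \eqref{est1.prop.resol.LpLq} follows by dualizing to the adjoint resolvent and applying the same mode-wise analysis, with constants depending on $\alpha,\delta,\ep,q$ as allowed by the statement.
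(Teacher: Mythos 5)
Your overall architecture for the small-$|\lambda|$ regime matches the paper: mode-wise reduction to the vorticity equation \eqref{eq.polar.vor}, Green's function built from $r^{-\delta/2}K_{\xi_n}(\sqrt\lambda r)$ and $r^{-\delta/2}I_{\xi_n}(\sqrt\lambda r)$, Biot--Savart reconstruction with the constraint $d_n[\omega_n]=0$ producing the factor $F_n(\sqrt\lambda)^{-1}$, and the lower bound of Corollary \ref{cor1.prop.est.Fn} to absorb it. The genuine gap is your treatment of the $|n|=1$ modes when $\lambda\in\Sigma_{\frac34\pi-\ep}$ but $|\lambda|\gtrsim e^{-1/(4|\alpha|)}$. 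You propose to keep using the explicit formula there, invoking Corollary \ref{cor2.prop.est.Fn} and ``large-argument asymptotics of $K_\nu$ (Lemma \ref{lem.est.bessel2})''; but Corollary \ref{cor2.prop.est.Fn} only says this region lies in $\rho(-\mathbb{A}_V)$ and gives no quantitative lower bound on $|F_n(\sqrt\lambda)|$, while Lemma \ref{lem.est.bessel2} is a \emph{small}-argument expansion valid for $|z|<1$. In the intermediate range $e^{-1/(4|\alpha|)}\le|\lambda|\le O(1)$ neither the small- nor the large-argument asymptotics of $K_{\xi_n}$ control $F_n$ from below, and establishing such a uniform non-vanishing bound is exactly the delicate issue the paper avoids: there, the region $\mathcal{S}_1^\ep(\alpha)=\big(\Sigma_{\frac34\pi-\ep}+4e\alpha^2 e^{-1/(4|\alpha|)}\big)\cap\{|z|>8e\alpha^2 e^{-1/(4|\alpha|)}\}$ is handled for \emph{all} modes, including $|n|=1$, by the energy method, using the refined numerical-range estimate of Lemma \ref{lem.est.innerprod.1} with the choice $T=e^{1/(8|\alpha|)}$, so that the only loss is the exponentially small shift $4e\alpha^2 e^{-1/(4|\alpha|)}\|v_n\|_{L^2}^2$, which is dominated by $|\lambda|/2$ on $\mathcal{S}_1^\ep(\alpha)$ (Lemma \ref{lem.apriori.est.}, Proposition \ref{prop.resol.est.1}). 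Without either that energy argument or a quantitative lower bound for $F_n$ away from the origin, your formula-based route does not close on the full sector.

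A secondary, fixable inefficiency: you obtain the $L^q$--$L^2$ bound \eqref{est1.prop.resol.LpLq} by dualizing to the adjoint resolvent, but the adjoint of $-\mathbb{A}_V$ is a different operator whose spectral and explicit analysis is nowhere developed, so this detour would require substantial extra work. The paper gets the $L^q$ dependence directly: in the energy regime via the H\"older--Gagliardo--Nirenberg--Young estimate \eqref{est3.proof.lem.apriori.est.}, which yields the rate $|\lambda|^{-\frac32+\frac1q}$ without duality, and in the explicit-formula regime via the $L^q$-based bounds on the integrals $J_l[f_n]$ and on $c_{n,\lambda}[f_n]$ (Lemmas \ref{lem.resol.LpLq.pm1.2}--\ref{lem.resol.LpLq.pm1.vor}). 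Your Hardy-based $L^2$--$L^2$ treatment of the modes $|n|\neq1$ is fine, but the claimed ``Young-type convolution'' refinement mode by mode would also need uniformity in $n$, which the paper's single energy estimate for $v_{\neq}$ renders unnecessary.
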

%%%%%%%%%%
%
Once Proposition \ref{prop.resol.LpLq} is proved, it is routine to prove Theorem \ref{thm.L2L2} by representing $\{e^{-t {\mathbb A}_V}\}_{t\ge0}$ in the Dunford integral of the resolvent. Thus the detail will be given in Appendix \ref{app.proof.thm.L2L2}.

We prove Proposition \ref{prop.resol.LpLq} in Subsection \ref{subsec.proof.prop.resol.LpLq} by a combination of energy method and explicit formulas for the solution. Note that the estimate \eqref{est1.prop.resol.LpLq} cannot be obtained by energy method alone, due to the absence of the Hardy inequality. However, this is not the case when $\lambda$ belongs to sectors shifted exponentially small in $|\alpha|$; see Proposition \ref{prop.resol.est.1} for details. Therefore, all that remains is to prove the estimate when $\lambda$ belongs to the intersection of sectors and the disks centered at the origin whose radius is exponentially small in $|\alpha|$. This proof is done by explicit formulas; see Proposition \ref{prop.resol.LpLq.pm1} for details.

%%%%%%%%%%%%%%%%%%%%
%%%%%%%%%%%%%%%%%%%%
\subsection{Energy method}\label{subsec.apriori}
%%%%%%%%%%%%%%%%%%%%
%%%%%%%%%%%%%%%%%%%%

We start with a priori estimates for \eqref{eq.resol} using energy method. 
%
%%%%%%%%%%
\begin{lemma}\label{lem.apriori.est.}
Let $\alpha,\delta\in\R$. For $\lambda\in\C$, $q\in(1,2]$ and $f\in L^2_{\sigma}(\Omega)\cap L^q(\Omega)^2$, suppose that there is a solution $v\in D(\mathbb{A}_V)$ of \eqref{eq.resol}. Then we have the following. 
\begin{enumerate}[(1)]
\item\label{item1.lem.apriori.est.1}
For $v_n$ with $|n|=1$, 
\begin{align*}
\begin{split}
&\Big(|\Im \lambda| + \Re \lambda - 4e\alpha^2 e^{-\frac{1}{4|\alpha|}} \Big) \|v_n\|_{L^2}^2 
+ \Big\{\frac14 - 2(|\alpha|+|\delta|) \Big\} \|\nabla v_n\|_{L^2}^2 \\
&\le 
C \|f\|_{L^q}^{\frac{2q}{3q-2}} 
\|v_n\|_{L^2}^{\frac{4(q-1)}{3q-2}}. 
\end{split}
\end{align*}

\item\label{item2.lem.apriori.est.1}
For $v_{\neq} = v - \sum_{|n|=1} v_n$, 
\begin{align*}
\begin{split}
&(|\Im \lambda| + \Re \lambda) \|v_{\neq}\|_{L^2}^2 
+ \Big\{\frac34 - 2(|\alpha|+|\delta|) \Big\} \|\nabla v_{\neq}\|_{L^2}^2 \\
&\le 
C \|f\|_{L^q}^{\frac{2q}{3q-2}} 
\|v_{\neq}\|_{L^2}^{\frac{4(q-1)}{3q-2}}. 
\end{split}
\end{align*}
\end{enumerate}
The constant $C$ depends only on $q$. 
\end{lemma}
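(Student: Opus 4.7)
The plan is to test \eqref{eq.resol} against the relevant Fourier modes and then invoke the numerical-range-type bounds from Section \ref{sec.spec.anal.}. Since $V$ is purely radial in the frame $({\bf e}_r,{\bf e}_\theta)$, the map $v\mapsto \mathbb{P} V^\bot\oprot v$ commutes with $\mathcal{P}_n$ (see Subsection \ref{subsec.fourier.series.decom}), so applying $\mathcal{P}_n$ to \eqref{eq.resol} and pairing in $L^2$ with $v_n$ gives, after integration by parts in $-\Delta$ and using orthogonality of distinct Fourier modes,
$$
\lambda\|v_n\|_{L^2}^2+\|\nabla v_n\|_{L^2}^2+\langle V^\bot\oprot v_n,v_n\rangle = \langle f,v_n\rangle.
$$
Summing the analogues over $|n|\ne 1$ yields the same identity with $v_n$ replaced by $v_\neq$.

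Taking real and imaginary parts and combining them with signs tailored to $\Im\lambda$ then produces
$$
(|\Im\lambda|+\Re\lambda)\|v_n\|_{L^2}^2+\|\nabla v_n\|_{L^2}^2\le 2|\langle V^\bot\oprot v_n,v_n\rangle|+2|\langle f,v_n\rangle|,
$$
and analogously for $v_\neq$. The dichotomy in the two conclusions now arises from how the bilinear term is controlled. For $|n|=1$ the Hardy inequality fails, and I would use the logarithmic-corrected estimate \eqref{est1.proof.prop.spec.small} applied to the single mode $v_n$ (the one-mode version of Lemma \ref{lem.est.innerprod.1} with the choice $T=e^{1/(8|\alpha|)}$), which gives
$$
2|\langle V^\bot\oprot v_n,v_n\rangle|\le\Big\{\frac{1}{2}+2(|\alpha|+|\delta|)\Big\}\|\nabla v_n\|_{L^2}^2+4e\alpha^2 e^{-\frac{1}{4|\alpha|}}\|v_n\|_{L^2}^2.
$$
For $v_\neq$ the Hardy inequality \eqref{Hardy.ineq.nonzero} together with \eqref{Hardytype.ineq.W} yields the cleaner bound $2|\langle V^\bot\oprot v_\neq,v_\neq\rangle|\le 2(|\alpha|+|\delta|)\|\nabla v_\neq\|_{L^2}^2$ with no exponential remainder, and this is the precise source of the $1/4$ versus $3/4$ split in the two conclusions of the lemma.

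The forcing is then absorbed by H\"older and the two-dimensional Gagliardo-Nirenberg inequality: with $q'=q/(q-1)\in[2,\infty)$,
$$
|\langle f,v_n\rangle|\le \|f\|_{L^q}\|v_n\|_{L^{q'}}\le C\|f\|_{L^q}\|v_n\|_{L^2}^{\frac{2(q-1)}{q}}\|\nabla v_n\|_{L^2}^{\frac{2-q}{q}},
$$
and Young's inequality with conjugate exponents $\bigl(\frac{2q}{2-q},\frac{2q}{3q-2}\bigr)$ absorbs a further $\frac{1}{4}\|\nabla v_n\|_{L^2}^2$ into the dissipation and leaves exactly $C\|f\|_{L^q}^{2q/(3q-2)}\|v_n\|_{L^2}^{4(q-1)/(3q-2)}$ on the right-hand side; the same step applies verbatim to $v_\neq$. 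Combining the pieces yields the dissipation coefficient $1-\frac{1}{2}-\frac{1}{4}=\frac{1}{4}$ in part (\ref{item1.lem.apriori.est.1}) and $1-\frac{1}{4}=\frac{3}{4}$ in part (\ref{item2.lem.apriori.est.1}). I do not anticipate any real obstacle: the only subtle ingredient is the logarithmic Hardy bound already contained in Lemma \ref{lem.est.innerprod.1}, and everything else is a standard Gagliardo-Nirenberg--Young bookkeeping.
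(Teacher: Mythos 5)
Your proposal is correct and follows essentially the same route as the paper's proof: the mode-wise energy identity, the bound $2|\langle V^\bot\oprot v_n,v_n\rangle|\le\{\tfrac12+2(|\alpha|+|\delta|)\}\|\nabla v_n\|_{L^2}^2+4e\alpha^2e^{-\frac{1}{4|\alpha|}}\|v_n\|_{L^2}^2$ coming from Lemma \ref{lem.est.innerprod.1} with $T=e^{\frac{1}{8|\alpha|}}$ for $|n|=1$, the Hardy-type estimates \eqref{Hardy.ineq.nonzero} and \eqref{Hardytype.ineq.W} for $v_{\neq}$, and the H\"older--Gagliardo--Nirenberg--Young absorption of the forcing term. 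The resulting coefficients $\tfrac14-2(|\alpha|+|\delta|)$ and $\tfrac34-2(|\alpha|+|\delta|)$ match the paper's bookkeeping exactly.
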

%%%%%%%%%%
%
%
%%%%%
\begin{proof}
(1) Taking the inner product of \eqref{eq.resol} with $v_n$, we see that 
$$
\lambda \|v_n\|_{L^2}^2 
- \langle -\mathbb{A}_{V} v_n, v_n\rangle
= \langle f, v_n \rangle 
$$
and hence that 
\begin{align}\label{est1.proof.lem.apriori.est.}
\begin{split}
(|\Im \lambda| + \Re \lambda) \|v_n\|_{L^2}^2 
\le 
|\Im \langle -\mathbb{A}_{V} v_n, v_n\rangle| 
+ \Re \langle -\mathbb{A}_{V} v_n, v_n\rangle
+ 2|\langle f, v_n\rangle|. 
\end{split}
\end{align}
From \eqref{est1.proof.prop.spec.general} and \eqref{est1.proof.prop.spec.small} in Section \ref{sec.spec.anal.}, we have 
\begin{align}\label{est2.proof.lem.apriori.est.}
\begin{split}
&|\Im \langle -\mathbb{A}_{V} v_n, v_n\rangle| 
+ \Re \langle -\mathbb{A}_{V} v_n, v_n\rangle \\
&\le 
-\|\nabla v_n\|_{L^2}^2 
+ 2|\langle V^\bot \oprot v_n, v_n\rangle| \\
&\le 
\Big\{-\frac12 + 2(|\alpha|+|\delta|) \Big\} 
\|\nabla v_n\|_{L^2}^2
+ 4e\alpha^2 e^{-\frac{1}{4|\alpha|}} 
\|v_n\|_{L^2}^2. 
\end{split}
\end{align}
One has, by the H\"{o}lder and the Gagliardo-Nirenberg inequalities, 
\begin{align}\label{est3.proof.lem.apriori.est.}
\begin{split}
|\langle f, u \rangle|
& \le \|f\|_{L^q} \|u\|_{L^{\frac{q}{q-1}}} \\
& \le C \|f\|_{L^q} \|u\|_{L^2}^{2(1-\frac1q)} 
\|\nabla u\|_{L^2}^{\frac2q-1} \\
& \le C \|f\|_{L^q}^{\frac{2q}{3q-2}} 
\|u\|_{L^2}^{\frac{4(q-1)}{3q-2}}
+ \frac18 \|\nabla u\|_{L^2}^{2}, 
\quad 
u\in W^{1,2}(\Omega)^2. 
\end{split}
\end{align}
The Young inequality is applied in the last line. The statement follows from \eqref{est1.proof.lem.apriori.est.}--\eqref{est3.proof.lem.apriori.est.}.

(2) In a similar manner as above, we see that 
\begin{align}\label{est4.proof.lem.apriori.est.}
\begin{split}
(|\Im \lambda| + \Re \lambda) \|v_{\neq}\|_{L^2}^2 
\le 
|\Im \langle -\mathbb{A}_{V} v_{\neq}, v_{\neq}\rangle| 
+ \Re \langle -\mathbb{A}_{V} v_{\neq}, v_{\neq}\rangle
+ 2|\langle f, v_{\neq}\rangle|. 
\end{split}
\end{align}
From \eqref{est1.proof.prop.spec.general} and \eqref{est1.lem.est.innerprod} in Section \ref{sec.spec.anal.}, we have 
\begin{align}\label{est5.proof.lem.apriori.est.}
\begin{split}
&|\Im \langle -\mathbb{A}_{V} v_{\neq}, v_{\neq}\rangle| 
+ \Re \langle -\mathbb{A}_{V} v_{\neq}, v_{\neq}\rangle \\ 
&\le 
-\|\nabla v_{\neq}\|_{L^2}^2 
+ 2|\langle V^\bot \oprot v_{\neq}, v_{\neq}\rangle| \\
&\le 
\{-1 + 2(|\alpha|+|\delta|)\} 
\|\nabla v_{\neq}\|_{L^2}^2. 
\end{split}
\end{align}
The statement follows from \eqref{est4.proof.lem.apriori.est.}--\eqref{est5.proof.lem.apriori.est.} combined with \eqref{est3.proof.lem.apriori.est.}. The proof is complete. 
\end{proof}
%%%%%
%

Lemma \ref{lem.apriori.est.} gives the following estimate of the resolvent. 
%
%%%%%%%%%%
\begin{proposition}\label{prop.resol.est.1} 
Let $\ep\in (0,\frac{\pi}{4})$ and let $\alpha,\delta\in\R$ be sufficiently small. Set
$$
\mathcal{S}_1^\ep(\alpha)
=
\Big(
\Sigma_{\frac34 \pi - \ep} + 4e\alpha^2 e^{-\frac{1}{4|\alpha|}}
\Big)
\cap 
\Big\{|z| > 8e\alpha^2 e^{-\frac{1}{4|\alpha|}} \Big\}
\subset
\rho(-\mathbb{A}_V). 
$$
For $q\in(1,2]$ and $f\in L^2_\sigma(\Omega) \cap L^q(\Omega)^2$, we have 
\begin{equation}\label{est1.prop.resol.est.1} 
\begin{split}
\|(\lambda+{\mathbb A}_V)^{-1} f\|_{L^2} 
& \le 
C |\lambda|^{-\frac32+\frac1q} \|f\|_{L^q}, 
\quad 
\lambda\in\mathcal{S}_1^\ep(\alpha), \\
\|\nabla (\lambda+{\mathbb A}_V)^{-1} f\|_{L^2} 
& \le 
C |\lambda|^{-1+\frac1q} \|f\|_{L^q}, 
\quad 
\lambda\in\mathcal{S}_1^\ep(\alpha). 
\end{split}
\end{equation}
The constant $C$ depends only on $\ep, q$. 
\end{proposition}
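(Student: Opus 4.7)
The plan is to combine a simple geometric observation about the region $\mathcal{S}_1^\ep(\alpha)$ with the a priori estimates from Lemma~\ref{lem.apriori.est.}, then extract the resolvent bounds by a Young-type absorption. First, the inclusion $\mathcal{S}_1^\ep(\alpha) \subset \rho(-\mathbb{A}_V)$ is immediate from Proposition~\ref{prop.spec.small}(\ref{item1.prop.spec.small}) since $\Sigma_{\frac{3}{4}\pi-\ep} \subset \Sigma_{\frac{3}{4}\pi}$. The key quantitative ingredient is the lower bound
\begin{equation*}
|\Im \lambda| + \Re \lambda - 4e\alpha^2 e^{-\frac{1}{4|\alpha|}} \ge c_\ep |\lambda|, \qquad \lambda \in \mathcal{S}_1^\ep(\alpha),
\end{equation*}
which I would obtain as follows: writing $\zeta = \lambda - 4e\alpha^2 e^{-1/(4|\alpha|)} = re^{i\theta}$ with $|\theta| < \tfrac{3}{4}\pi - \ep$, an elementary trigonometric check yields $|\sin\theta| + \cos\theta \ge \sqrt{2}\sin\ep$, hence $|\Im\zeta| + \Re\zeta \ge \sqrt{2}\sin(\ep)|\zeta|$; the disk condition $|\lambda| > 8e\alpha^2 e^{-1/(4|\alpha|)}$ then forces $|\zeta| \ge |\lambda|/2$, giving the claim. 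Since $4e\alpha^2 e^{-1/(4|\alpha|)} \ge 0$, the same inequality $|\Im\lambda| + \Re\lambda \ge c_\ep |\lambda|$ follows automatically.

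Next, I set $v = (\lambda + \mathbb{A}_V)^{-1} f$ and decompose $v = v_1 + v_{-1} + v_{\neq}$ via the Fourier projections of Subsection~\ref{subsec.fourier.series.decom}; this decomposition is orthogonal both in $L^2_\sigma(\Omega)$ and in the gradient seminorm. Shrinking $\alpha, \delta$ so that $\tfrac14 - 2(|\alpha|+|\delta|) \ge \tfrac18$ and $\tfrac34 - 2(|\alpha|+|\delta|) \ge \tfrac12$, Lemma~\ref{lem.apriori.est.} combined with the geometric bound gives, for each $w \in \{v_1, v_{-1}, v_{\neq}\}$,
\begin{equation*}
c_\ep |\lambda|\, \|w\|_{L^2}^2 + c\, \|\nabla w\|_{L^2}^2 \le C\, \|f\|_{L^q}^{\frac{2q}{3q-2}} \|w\|_{L^2}^{\frac{4(q-1)}{3q-2}}.
\end{equation*}

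The final step is purely algebraic. The crucial identity $2 - \tfrac{4(q-1)}{3q-2} = \tfrac{2q}{3q-2}$ allows the $L^2$ term on the left to absorb the right-hand side, giving $\|w\|_{L^2} \le C|\lambda|^{-\frac{3}{2}+\frac{1}{q}}\|f\|_{L^q}$; substituting this back into the gradient part and simplifying the exponents yields $\|\nabla w\|_{L^2} \le C|\lambda|^{-1+\frac{1}{q}}\|f\|_{L^q}$. Summing the squared estimates over the three components using orthogonality produces \eqref{est1.prop.resol.est.1}. I do not anticipate a substantive obstacle here: the sector-geometry lower bound is elementary, and everything else is bookkeeping around the a priori inequality already packaged in Lemma~\ref{lem.apriori.est.}.
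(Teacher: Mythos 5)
Your proposal is correct and follows essentially the same route as the paper: the inclusion into the resolvent set, the elementary sector estimate $|\Im\lambda|+\Re\lambda-4e\alpha^2 e^{-\frac{1}{4|\alpha|}}\ge C(\ep)|\lambda|$ on $\mathcal{S}_1^\ep(\alpha)$, the a priori bounds of Lemma \ref{lem.apriori.est.} applied to $v_{\pm1}$ and $v_{\neq}$, and the Young-type absorption using $2-\tfrac{4(q-1)}{3q-2}=\tfrac{2q}{3q-2}$ are exactly the ingredients of the paper's proof, with you merely spelling out the trigonometric and algebraic details left implicit there. The only (harmless) deviation is that you obtain $\mathcal{S}_1^\ep(\alpha)\subset\rho(-\mathbb{A}_V)$ from Proposition \ref{prop.spec.small} (\ref{item1.prop.spec.small}) rather than from Corollary \ref{cor2.prop.est.Fn}, which is equally valid (and even avoids the sign restriction $\delta\ge0$ implicit in the latter).
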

%%%%%%%%%%
%
%
%%%%%
\begin{proof}
Since $\mathcal{S}_1^\ep(\alpha) \subset \Sigma_{\frac34\pi-\ep} \subset \rho(-\mathbb{A}_V)$ by Corollary \ref{cor2.prop.est.Fn}, we see that $(\lambda+{\mathbb A}_V)^{-1} f$ exists for any $\lambda\in\mathcal{S}_1^\ep(\alpha)$. Observe that, if $\lambda\in\mathcal{S}_1^\ep(\alpha)$, we have both 
\begin{align*}
\begin{split}
|\Im \lambda| + \Re \lambda - 4e\alpha^2 e^{-\frac{1}{4|\alpha|}}
&=
\Big|\Im \Big(\lambda- 4e\alpha^2 e^{-\frac{1}{4|\alpha|}}\Big)\Big| 
+ \Re\Big(\lambda- 4e\alpha^2 e^{-\frac{1}{4|\alpha|}}\Big) \\
&\ge 
C \Big|\lambda - 4e\alpha^2 e^{-\frac{1}{4|\alpha|}}\Big|, 
\end{split}
\end{align*}
with a constant $C=C(\ep)$, and 
\begin{align*}
\begin{split}
\Big|\lambda - 4e\alpha^2 e^{-\frac{1}{4|\alpha|}}\Big| 
> |\lambda| - \frac{|\lambda|}{2}
= \frac{|\lambda|}{2}. 
\end{split}
\end{align*}
Hence, under the smallness on $\alpha, \delta$, Lemma \ref{lem.apriori.est.} gives 
\begin{align*}
\begin{split}
|\lambda| \|v_n\|_{L^2}^2  + \|\nabla v_n\|_{L^2}^2 
&\le 
C \|f\|_{L^q}^{\frac{2q}{3q-2}} 
\|v_n\|_{L^2}^{\frac{4(q-1)}{3q-2}}, 
\quad 
|n|=1, 
\quad 
\lambda\in\mathcal{S}_1^\ep(\alpha), \\
|\lambda| \|v_{\neq}\|_{L^2}^2  + \|\nabla v_{\neq}\|_{L^2}^2 
&\le 
C \|f\|_{L^q}^{\frac{2q}{3q-2}} 
\|v_{\neq}\|_{L^2}^{\frac{4(q-1)}{3q-2}}, 
\quad 
\lambda\in\mathcal{S}_1^\ep(\alpha)
\end{split}
\end{align*}
for the solution of \eqref{eq.resol}, namely, for $v=(\lambda+{\mathbb A}_V)^{-1} f$. This implies the assertion \eqref{est1.prop.resol.est.1}. 
\end{proof}
%%%%%
%

%%%%%%%%%%%%%%%%%%%%
%%%%%%%%%%%%%%%%%%%%
\subsection{Explicit formulas}\label{subsec.expl.form.}
%%%%%%%%%%%%%%%%%%%%
%%%%%%%%%%%%%%%%%%%%

Energy method can not lead to Proposition \ref{prop.resol.LpLq} due to the absence of the Hardy inequality. Instead, we employ explicit formulas and prove the following proposition.  
%
%%%%%%%%%%
\begin{proposition}\label{prop.resol.LpLq.pm1}
Let $|n|=1$ and $\ep\in(0,\frac{\pi}4)$ and let $(\alpha,\delta)\in\R^\ast\times\R_{\ge0}$ be sufficiently small. Set 
$$
\mathcal{S}_2^\ep(\alpha)
= \Sigma_{\frac34\pi-\ep} 
\cap
\Big\{|z|<e^{-\frac{1}{4|\alpha|}}\Big\} 
\subset
\rho(-\mathbb{A}_V). 
$$
We have, for $q\in(1.2]$ and $f\in L^2_\sigma(\Omega) \cap L^q(\Omega)^2$, 
\begin{equation}\label{est1.prop.resol.LpLq.pm1}
\begin{split}
\|\mathcal{P}_n (\lambda+{\mathbb A}_{V})^{-1} f\|_{L^2} 
\le 
C |\lambda|^{-\frac32+\frac1q}
\|f\|_{L^q},
\quad 
\lambda\in \mathcal{S}_2^\ep(\alpha) 
\end{split}
\end{equation}
and, for $f\in L^2_\sigma(\Omega)$, 
\begin{equation}\label{est2.prop.resol.LpLq.pm1}
\begin{split}
\|\nabla \mathcal{P}_n (\lambda+{\mathbb A}_{V})^{-1} f\|_{L^2} 
\le 
C |\lambda|^{-\frac12}
\|f\|_{L^2},
\quad 
\lambda\in \mathcal{S}_2^\ep(\alpha). 
\end{split}
\end{equation}
The constant $C$ depends only on $\alpha,\delta,\ep,q$. 
\end{proposition}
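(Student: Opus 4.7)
My plan is to derive an explicit formula for $\mathcal{P}_n(\lambda+\mathbb{A}_V)^{-1}f$ through the vorticity formulation in polar coordinates, and then insert the lower bound on $F_n$ established in Section \ref{sec.quant.anal.disc.spec} to control the singular factor that appears. Let $v_n = \mathcal{P}_n(\lambda+\mathbb{A}_V)^{-1}f$ and $\omega_n = (\operatorname{rot} v_n)_n$. By \eqref{eq.polar.vor}, the radial profile $\omega_n(r)$ satisfies a second-order linear ODE with source $(\operatorname{rot} f_n)_n$, and by Proposition \ref{prop.biot-savart} the no-slip condition at $\partial\Omega$ is equivalent to the single scalar constraint $d_n[\omega_n] = 0$ from \eqref{def.psi_n}.

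The first step is to solve this ODE explicitly. Two linearly independent solutions of the homogeneous equation are $r^{-\delta/2} K_{\xi_n}(\sqrt{\lambda}\,r)$ and $r^{-\delta/2} I_{\xi_n}(\sqrt{\lambda}\,r)$, as used already in the proof of Proposition \ref{prop.charact.disc.ev}. Variation of parameters produces a particular decaying solution $\omega_n^{\mathrm{p}}$ of the inhomogeneous equation, and the general decaying solution of the full problem takes the form
\begin{equation*}
\omega_n(r) = \omega_n^{\mathrm{p}}(r) + c_n \, r^{-\delta/2} K_{\xi_n}(\sqrt{\lambda}\,r).
\end{equation*}
Imposing the integral boundary condition $d_n[\omega_n]=0$ gives $c_n F_n(\sqrt{\lambda}) = -d_n[\omega_n^{\mathrm{p}}]$, so one recovers
\begin{equation*}
c_n = -\frac{d_n[\omega_n^{\mathrm{p}}]}{F_n(\sqrt{\lambda})},
\end{equation*}
which is well-defined for $\lambda \in \mathcal{S}_2^\epsilon(\alpha)$ thanks to Corollaries \ref{cor1.prop.est.Fn} and \ref{cor2.prop.est.Fn}. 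Finally, $v_n$ is reconstructed from $\omega_n$ via the Biot-Savart law $v_n = \mathcal{V}_n[\omega_n]$ in \eqref{def.Biot-Savart}.

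To extract the estimates \eqref{est1.prop.resol.LpLq.pm1} and \eqref{est2.prop.resol.LpLq.pm1}, I will split $v_n$ into the ``particular'' contribution $\mathcal{V}_n[\omega_n^{\mathrm{p}}]$ and the ``boundary corrector'' $c_n \mathcal{V}_n[r^{-\delta/2}K_{\xi_n}(\sqrt{\lambda}\,r)]$. The particular piece is bounded directly by H\"older and duality against $f$, using the Bessel asymptotics from Appendix \ref{appendix.bessel} (small-argument expansion near $r=1$ and exponential decay for $r\sqrt{|\lambda|} \gtrsim 1$); this yields the desired powers $|\lambda|^{-3/2+1/q}$ and $|\lambda|^{-1/2}$ once one rescales $r\mapsto r/\sqrt{|\lambda|}$. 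The corrector piece is controlled by combining the bound $|c_n| \le C|d_n[\omega_n^{\mathrm{p}}]|/|F_n(\sqrt{\lambda})|$ with Corollary \ref{cor1.prop.est.Fn}, which gives $1/|F_n(\sqrt{\lambda})| \lesssim |\delta/2+\eta_n|^{-1}|\alpha|^{-1}|\lambda|^{\Re \xi_n/2}$, and with explicit computations of $\|\mathcal{V}_n[r^{-\delta/2}K_{\xi_n}(\sqrt{\lambda}\,r)]\|_{L^2}$ and its gradient, again via the Bessel integrals.

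The main obstacle will be the bookkeeping of the Bessel-weighted integrals uniformly in small $(\alpha,\delta)$: one must track how the extra factor $r^{-\delta/2}$ and the shifted order $\xi_n = 1 + \eta_n$ interact with the rescaling $r\mapsto r/\sqrt{|\lambda|}$, and verify that the algebraic blow-up $|\lambda|^{\Re \xi_n/2}$ contributed by $1/F_n(\sqrt{\lambda})$ is exactly cancelled by the decay of $d_n[\omega_n^{\mathrm{p}}]$ and of the corrector profile, so that the final powers of $|\lambda|$ match those claimed. A further subtlety, which forces the restriction to $\mathcal{S}_2^\epsilon(\alpha)$, is that the lower bound on $F_n$ degrades near the boundary of the sector; the condition $|\lambda| < e^{-1/(4|\alpha|)}$ is what permits the logarithmic estimate in Corollary \ref{cor1.prop.est.Fn} to absorb the factor $|\log|\lambda||$. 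Otherwise the scheme is parallel to \cite{Maekawa(2017a),Higaki(2019)}, with the new ingredient being the $\delta$-dependent modification of the Bessel order and weight, which is the reason the assumption $\delta\ge 0$ percolates through the final estimate.
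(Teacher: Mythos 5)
Your proposal follows essentially the same route as the paper: solve the vorticity equation \eqref{eq.polar.vor} by variation of parameters with the modified solutions $r^{-\delta/2}K_{\xi_n}(\sqrt{\lambda}r)$, $r^{-\delta/2}I_{\xi_n}(\sqrt{\lambda}r)$, reconstruct $v_n$ by the Biot--Savart law, fix the homogeneous coefficient through the constraint $d_n[\omega_n]=0$ so that it equals $-d_n[\Phi_{n,\lambda}[f_n]]/F_n(\sqrt{\lambda})$, and then combine Bessel-integral estimates of the particular part and the corrector with the lower bound on $F_n$ from Corollary \ref{cor1.prop.est.Fn}, the factor $|\lambda|^{\Re\xi_n/2}$ being cancelled exactly as you anticipate (the paper's Lemma \ref{lem.resol.LpLq.pm1.3} gives $|c_{n,\lambda}[f_n]|\le C|\lambda|^{-1+1/q}\|f\|_{L^q}$ and Lemma \ref{lem.resol.LpLq.pm1.vor} gives $\|\mathcal{V}_n[r^{-\delta/2}K_{\xi_n}(\sqrt{\lambda}\,\cdot)]\|_{L^2}\le C|\lambda|^{-\Re\xi_n/2-1/2}$). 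The only cosmetic difference is that for \eqref{est2.prop.resol.LpLq.pm1} the paper estimates the vorticity formula \eqref{formula.vor.collected} in $L^2$ and uses $\|\oprot u\|_{L^2}=\|\nabla u\|_{L^2}$ rather than differentiating the Biot--Savart representation.
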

%%%%%%%%%%
%

The derivation of the formula is as follows. Let $\lambda\in\rho(-{\mathbb A}_{V})$ and assume first $f\in C^\infty_{0,\sigma}(\Omega)$ in \eqref{eq.resol}. Then the solution $v=(\lambda+{\mathbb A}_{V})^{-1} f$ is smooth in $\Omega$ thanks to the elliptic regularity of the Stokes system, and $\omega_n(r):=(\oprot v_n)_n(r)$ solves the equation \eqref{eq.polar.vor} in Subsection \ref{subsec.eqs.polar.coord}. Since the linearly independent solutions of its homogeneous equation are \eqref{sols.eta.K.I} in Appendix \ref{app.hom.eq.vor} and the Wronskian is $r^{-1-\delta}$, we see that $\omega_n(r)$ is given by
\begin{align}\label{formula.vor.1}
\omega_n(r) 
= 
\tilde{c}_{\lambda,n}[f_n] 
r^{-\frac{\delta}2} K_{\xi_n}(\sqrt{\lambda} r) 
+ \Phi_{\lambda,n}[f_n](r). 
\end{align}
The constant $\tilde{c}_{\lambda,n}[f_n]$ is determined later and $\Phi_{\lambda,n}[f_n]$ is defined by 
\begin{align*}
\Phi_{\lambda,n}[f_n](r)
& = 
r^{-\frac{\delta}2} K_{\xi_n}(\sqrt{\lambda} r)
\int_1^r s^{1+\frac{\delta}{2}} I_{\xi_n}(\sqrt{\lambda} s) (\oprot f_n)_n(s) \dd s \\
&\quad
+ r^{-\frac{\delta}2} I_{\xi_n}(\sqrt{\lambda} r)
\int_r^\infty s^{1+\frac{\delta}{2}} K_{\xi_n}(\sqrt{\lambda} s) (\oprot f_n)_n(s) \dd s. 
\end{align*}
Using integration by parts and setting 
\begin{align}\label{formula.g}
g^{(1)}_n
= 
\Big(\xi_n + \frac\delta2\Big) f_{\theta,n} + in f_{r,n}, 
\qquad 
g^{(2)}_n
= 
\Big(\xi_n - \frac\delta2\Big) f_{\theta,n} - in f_{r,n}, 
\end{align}
we have 
\begin{align}\label{formula.vor.2}
\begin{split}
\Phi_{\lambda,n}[f_n](r)
& =
-r^{-\frac{\delta}2} K_{\xi_n}(\sqrt{\lambda} r)
\int_1^r s^{\frac{\delta}{2}} I_{\xi_n}(\sqrt{\lambda} s) 
g^{(1)}_n(s) \dd s \\
&\quad
- \sqrt{\lambda}  r^{-\frac{\delta}2} K_{\xi_n}(\sqrt{\lambda} r)
\int_1^r s^{1+\frac{\delta}{2}} I_{\xi_n+1}(\sqrt{\lambda} s) 
f_{\theta,n}(s) \dd s \\
&\quad
+ r^{-\frac{\delta}2} I_{\xi_n}(\sqrt{\lambda} r)
\int_r^\infty 
s^{\frac{\delta}{2}} K_{\xi_n}(\sqrt{\lambda} s) 
g^{(2)}_n(s) \dd s \\
&\quad
+ \sqrt{\lambda} 
r^{-\frac{\delta}2} I_{\xi_n}(\sqrt{\lambda} r) 
\int_r^\infty s^{1+\frac{\delta}{2}} K_{\xi_n-1}(\sqrt{\lambda} s) 
f_{\theta,n}(s) \dd s. 
\end{split}
\end{align}
Since $\omega_n(r)$ decays exponentially, we see from Proposition \ref{prop.biot-savart} that $v_n$ is uniquely represented by the Biot–Savart law as, with the notations in \eqref{def.psi_n}--\eqref{def.Biot-Savart}, 
\begin{align}\label{formula.vel.1}
\begin{split}
v_n = \mathcal{V}_n[\omega_n]
= 
\tilde{c}_{\lambda,n}[f_n] 
\mathcal{V}_n\big[r\mapsto r^{-\frac{\delta}2} K_{\xi_n}(\sqrt{\lambda} r)\big]
+ \mathcal{V}_n\big[\Phi_{\lambda,n}[f_n]\big]. 
\end{split}
\end{align}
This formula is implemented with the constraint $d_n[\omega_n]=0$, which we write 
\begin{align}\label{formula.const.1}
\tilde{c}_{n,\lambda}[f_n]
F_n(\sqrt{\lambda}) 
+ d_n\big[\Phi_{n,\lambda}[f_n]\big]
= 0, 
\end{align}
by using $F_n(\sqrt{\lambda})$ in \eqref{def.Fn}. This relation determines $\tilde{c}_{n,\lambda}[f_n]$. We set 
\begin{align}\label{formula.const.2}
c_{n,\lambda}[f_n]
= 
d_n\big[\Phi_{n,\lambda}[f_n]\big] 
= 
\int_1^\infty s^{1-|n|} \Phi_{n,\lambda}[f_n](s) \dd s. 
\end{align}

Collecting \eqref{formula.vor.1}--\eqref{formula.const.2}, we find that 
\begin{align}\label{formula.vel.collected}
\begin{split}
\mathcal{P}_n (\lambda+{\mathbb A}_{V})^{-1} f
&= 
-\frac{c_{n,\lambda}[f_n]}{F_n(\sqrt{\lambda})}
\mathcal{V}_n\big[r\mapsto r^{-\frac{\delta}2} K_{\xi_n}(\sqrt{\lambda} r)\big]
+ \mathcal{V}_n\big[\Phi_{n,\lambda}[f_n]\big] 
\end{split}
\end{align}
and that 
\begin{align}\label{formula.vor.collected}
\begin{split}
&\big(\oprot\mathcal{P}_n (\lambda+{\mathbb A}_{V})^{-1} f\big)(r,\theta) 
= 
-\frac{c_{n,\lambda}[f_n]}{F_n(\sqrt{\lambda})} 
r^{-\frac{\delta}2} K_{\xi_n}(\sqrt{\lambda} r) e^{in\theta} 
+ \Phi_{n,\lambda}[f_n] e^{in\theta}. 
\end{split}
\end{align}
For general $f\in L^2_\sigma(\Omega)$, one should understand the formulas \eqref{formula.vel.collected}--\eqref{formula.vor.collected} by density argument. This understanding is possible thanks to the estimates in Proposition \ref{prop.resol.LpLq.pm1}. Note that the uniqueness of representation is guaranteed by Proposition \ref{prop.biot-savart}.

Now we let $|n|=1$ and estimate \eqref{formula.vel.collected}--\eqref{formula.vor.collected}. Firstly we estimate 
$$
\mathcal{V}_n\big[\Phi_{n,\lambda}[f_n]\big]
= \mathcal{V}_{r,n}\big[\Phi_{n,\lambda}[f_n]\big](r) e^{i n \theta}{\bf e}_r  
+  \mathcal{V}_{\theta,n}\big[\Phi_{n,\lambda}[f_n]\big](r) e^{i n\theta} {\bf e}_\theta 
$$
in \eqref{formula.vel.collected}, where 
\begin{align*}
\mathcal{V}_{r,n}\big[\Phi_{n,\lambda}[f_n]\big]
& 
=  -\frac{in}{2r} 
\bigg( \frac{c_{n,\lambda}[f_n]}{r} 
- \frac{1}{r} \int_1^r s^2 \Phi_{n,\lambda}[f_n](s) \dd s  
- r \int_r^\infty \Phi_{n,\lambda}[f_n](s) \dd s \bigg), \\
\mathcal{V}_{\theta,n}\big[\Phi_{n,\lambda}[f_n]\big]
& 
= \frac{1}{2r} 
\bigg( \frac{c_{n,\lambda}[f_n]}{r} 
- \frac{1}{r} \int_1^r s^2 \Phi_{n,\lambda}[f_n](s) \dd s   
+ r \int_r^\infty \Phi_{n,\lambda}[f_n](s) \dd s \bigg). 
\end{align*}
%

%
%%%%%%%%%%
\begin{lemma}\label{lem.resol.LpLq.pm1.1}
Let $|n|=1$ and let $\alpha,\delta\in\R$. For $\lambda\in\C\setminus\R_{\le0}$ and $f\in C^\infty_{0,\sigma}(\Omega)$, we have 
\begin{align}
\frac{1}{r} \int_1^r s^2 \Phi_{n,\lambda}[f_n](s) \dd s 
&= 
\sum_{l=1}^{9} J_l[f_n](r), 
\label{eq1.lem.resol.LpLq.pm1.1} \\
r
\int_r^\infty 
\Phi_{n,\lambda}[f_n](s) \dd s
&= \sum_{l=10}^{17} J_l[f_n](r), 
\label{eq2.lem.resol.LpLq.pm1.1} 
\end{align}
and 
\begin{align}
c_{n,\lambda}[f_n]
&=
\sum_{l=11,13,14,15,17} J_l[f_n](1), 
\label{eq3.lem.resol.LpLq.pm1.1} 
\end{align}
where 
\begin{align*}
J_1[f_n](r) 
&= 
-\frac{1}{r} 
\int_1^r 
\tau^{\frac{\delta}2} I_{\xi_n}(\sqrt{\lambda}\tau) g^{(1)}_n(\tau) 
\int_\tau^r 
s^{2-\frac{\delta}2} K_{\xi_n}(\sqrt{\lambda} s) \dd s 
\dd \tau, \\
J_2[f_n](r) 
&= 
-\Big(\xi_n+1-\frac{\delta}2\Big) 
\frac{1}{r} 
\int_1^r 
\tau^{1+\frac{\delta}2} 
I_{\xi_n+1}(\sqrt{\lambda} \tau) f_{\theta,n}(\tau) 
\int_\tau^r 
s^{1-\frac{\delta}2} K_{\xi_n-1}(\sqrt{\lambda} s) \dd s 
\dd \tau, \\
J_3[f_n](r) 
&= 
\frac{1}{r} 
\int_1^r 
\tau^{\frac{\delta}2}  K_{\xi_n}(\sqrt{\lambda}\tau)
g^{(2)}_n(\tau) 
\int_1^\tau 
s^{2-\frac{\delta}2} I_{\xi_n}(\sqrt{\lambda} s) \dd s 
\dd \tau, \\
J_4[f_n](r) 
&= 
\Big(\xi_n-1+\frac{\delta}2\Big) 
\frac{1}{r} 
\int_1^r 
\tau^{1+\frac{\delta}2}  K_{\xi_n-1}(\sqrt{\lambda}\tau)
f_{\theta,n}(\tau)
\int_1^\tau 
s^{1-\frac{\delta}2} 
I_{\xi_n+1}(\sqrt{\lambda} s) 
\dd s 
\dd \tau, \\
J_5[f_n](r) 
&= 
\frac{1}{r} 
\int_r^\infty 
s^{\frac{\delta}2} 
K_{\xi_n}(\sqrt{\lambda}s) g^{(2)}_n(s) \dd s
\int_1^r 
s^{2-\frac{\delta}2} 
I_{\xi_n}(\sqrt{\lambda} s) \dd s, \\
J_6[f_n](r) 
&= 
\Big(\xi_n-1+\frac{\delta}2\Big) 
\frac{1}{r} 
\int_r^\infty 
s^{1+\frac{\delta}2} 
K_{\xi_n-1}(\sqrt{\lambda}s) f_{\theta,n}(s) \dd s
\int_1^r 
s^{1-\frac{\delta}2} I_{\xi_n+1}(\sqrt{\lambda} s) \dd s, \\
J_7[f_n](r) 
&= 
r^{1-\frac{\delta}2}
K_{\xi_n-1}(\sqrt{\lambda}r)
\int_1^r 
\tau^{1+\frac{\delta}2}  I_{\xi_n+1}(\sqrt{\lambda}\tau)
f_{\theta,n}(\tau)
\dd \tau, \\
J_8[f_n](r) 
&= 
r^{1-\frac{\delta}2}
I_{\xi_n+1}(\sqrt{\lambda}r)
\int_r^\infty 
\tau^{1+\frac{\delta}2} K_{\xi_n-1}(\sqrt{\lambda}\tau)
f_{\theta,n}(\tau)
\dd \tau, \\
J_9[f_n](r) 
&= 
-I_{\xi_n+1}(\sqrt{\lambda})
\frac{1}{r} 
\int_1^\infty 
\tau^{1+\frac{\delta}2} K_{\xi_n-1}(\sqrt{\lambda}\tau)
f_{\theta,n}(\tau)
\dd \tau, \\
J_{10}[f_n](r) 
&= 
-r 
\int_1^r 
s^{\frac{\delta}2} 
I_{\xi_n}(\sqrt{\lambda} s) g^{(1)}_n(s) 
\dd s
\int_r^\infty 
s^{-\frac{\delta}2} K_{\xi_n}(\sqrt{\lambda} s) 
\dd s, \\
J_{11}[f_n](r) 
&= 
-r 
\int_r^\infty 
\tau^{\frac{\delta}2} I_{\xi_n}(\sqrt{\lambda} \tau)  g^{(1)}_n(\tau) \int_\tau^\infty s^{-\frac{\delta}2} K_{\xi_n}(\sqrt{\lambda} s) 
\dd s \dd \tau, \\
J_{12}[f_n](r) 
&= 
-\Big(\xi_n-1-\frac{\delta}2\Big)
r 
\int_1^r 
s^{1+\frac{\delta}2} 
I_{\xi_n+1}(\sqrt{\lambda} s) 
f_{\theta,n}(s)
\dd s
\int_r^\infty 
s^{-1-\frac{\delta}2} K_{\xi_n-1}(\sqrt{\lambda} s) 
\dd s, \\
J_{13}[f_n](r) 
&= 
-\Big(\xi_n-1-\frac{\delta}2\Big)
r 
\int_r^\infty 
\tau^{1+\frac{\delta}2} 
I_{\xi_n+1}(\sqrt{\lambda} \tau) 
f_{\theta,n}(s) 
\int_\tau^\infty 
s^{-1-\frac{\delta}2} 
K_{\xi_n-1}(\sqrt{\lambda} s) 
\dd s \dd \tau, \\
J_{14}[f_n](r) 
&= 
r 
\int_r^\infty 
\tau^{\frac{\delta}2} K_{\xi_n}(\sqrt{\lambda} \tau) g^{(2)}_n(\tau) 
\int_r^\tau 
s^{-\frac{\delta}2} I_{\xi_n}(\sqrt{\lambda} s) 
\dd s \dd \tau, \\
J_{15}[f_n](r) 
&= 
\Big(\xi_n + 1 + \frac{\delta}2\Big)
r 
\int_r^\infty 
\tau^{1+\frac{\delta}2} 
K_{\xi_n-1}(\sqrt{\lambda} \tau) 
f_{\theta,n}(\tau) 
\int_r^\tau 
s^{-1-\frac{\delta}2} I_{\xi_n+1}(\sqrt{\lambda} s) 
\dd s \dd \tau, \\
J_{16}[f_n](r) 
&= 
-r^{1-\frac{\delta}2}
K_{\xi_n-1}(\sqrt{\lambda} r)
\int_1^r 
\tau^{1+\frac{\delta}2} I_{\xi_n+1}(\sqrt{\lambda} \tau) f_{\theta,n}(\tau) 
\dd \tau, \\
J_{17}[f_n](r) 
&= 
-r^{1-\frac{\delta}2}
I_{\xi_n+1}(\sqrt{\lambda} r)
\int_r^\infty 
\tau^{1+\frac{\delta}2} K_{\xi_n-1}(\sqrt{\lambda} \tau)  f_{\theta,n}(\tau) 
\dd \tau. 
\end{align*}
\end{lemma}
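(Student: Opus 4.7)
The plan is direct computation: substitute the four-term decomposition \eqref{formula.vor.2} of $\Phi_{n,\lambda}[f_n]$---label the summands $B_1,B_2,B_3,B_4$ in the order they appear---into the three quantities on the left of \eqref{eq1.lem.resol.LpLq.pm1.1}--\eqref{eq3.lem.resol.LpLq.pm1.1}, and reorganize each expression by Fubini's theorem together with a pair of modified Bessel recurrences.

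For \eqref{eq1.lem.resol.LpLq.pm1.1}, inserting $B_1$ and swapping the order of integration via Fubini produces $J_1$ directly; the analogous manipulation of $B_3$ splits the Fubini region $\{1\le s\le r,\ s\le\tau\}$ into $\{1\le s\le\tau\le r\}\cup\{1\le s\le r\le\tau\}$ and yields $J_3+J_5$. The contributions of $B_2$ and $B_4$ carry an extra factor $\sqrt{\lambda}$, which I would absorb using
\[
\sqrt{\lambda}\,s\,K_{\xi_n}(\sqrt{\lambda}s)=\xi_n K_{\xi_n-1}(\sqrt{\lambda}s)-\frac{\dd}{\dd s}\bigl[s K_{\xi_n-1}(\sqrt{\lambda}s)\bigr]
\]
and its analogue $\sqrt{\lambda}\,s\,I_{\xi_n}(\sqrt{\lambda}s)=\xi_n I_{\xi_n+1}(\sqrt{\lambda}s)+\frac{\dd}{\dd s}[sI_{\xi_n+1}(\sqrt{\lambda}s)]$; both follow in one line from $K_\nu'(z)=-K_{\nu+1}(z)+\nu z^{-1}K_\nu(z)$ and $I_\nu'(z)=I_{\nu-1}(z)-\nu z^{-1}I_\nu(z)$. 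After integration by parts on the derivative pieces, the ``bulk'' contributions become $J_2$ from $B_2$ and $J_4$ from $B_4$; the coefficients $\xi_n+1-\delta/2$ and $\xi_n-1+\delta/2$ appearing there are exactly what a second integration by parts against $s^{1-\delta/2}$ produces. The remaining boundary terms fall into three groups: those at the outer endpoint $s=r$ combine to give $J_7$ and $J_8$; those at the inner endpoint $s=1$ combine across $B_2$ and $B_4$ into $J_9$; the diagonal boundary contributions at $s=\tau$ from $B_2$ and $B_4$ produce the same quantity $\tau^{3}I_{\xi_n+1}(\sqrt{\lambda}\tau)K_{\xi_n-1}(\sqrt{\lambda}\tau)f_{\theta,n}(\tau)$ with opposite signs and cancel.

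Identity \eqref{eq2.lem.resol.LpLq.pm1.1} is handled analogously with $r\int_r^\infty\cdots\dd s$ replacing $\tfrac{1}{r}\int_1^r s^2\cdots\dd s$: the adjusted Fubini regions give the pairs $(J_{10},J_{11})$ and $(J_{12},J_{13})$ from $B_1$ and $B_2$, a single term $J_{14}$ from $B_3$, and $J_{15}$ together with the boundary pieces $J_{16},J_{17}$ from $B_4$, with the same diagonal cancellation as before. Identity \eqref{eq3.lem.resol.LpLq.pm1.1} is then immediate: since $|n|=1$, the coefficient $c_{n,\lambda}[f_n]$ defined in \eqref{formula.const.2} coincides with $\int_1^\infty\Phi_{n,\lambda}[f_n](s)\dd s$, which is the value at $r=1$ of the left-hand side of \eqref{eq2.lem.resol.LpLq.pm1.1}; the terms $J_{10}(1)$, $J_{12}(1)$, $J_{16}(1)$ vanish because their inner $\int_1^r\cdots\dd s$ reduces to $\int_1^1=0$, leaving exactly the stated sum.

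The main obstacle is bookkeeping, not analysis: tracking the Bessel shifts $\xi_n\mapsto\xi_n\pm 1$, the coefficient combinations $\xi_n\pm 1\pm\delta/2$ that emerge from each integration by parts, and the allocation of the several boundary contributions either to a specific $J_l$ or to a pairwise cancellation between $B_2$ and $B_4$. No single step is deep; the verification is purely algebraic and must be carried out carefully, term by term.
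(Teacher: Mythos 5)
Your proposal is correct and follows essentially the same route as the paper: expand $\Phi_{n,\lambda}[f_n]$ into its four summands, interchange the order of integration, absorb the factors $\sqrt{\lambda}$ via the Watson recurrences $zK_\mu(z)=(\mu-1)K_{\mu-1}(z)-zK_{\mu-1}'(z)$ and $zI_\mu(z)=(\mu+1)I_{\mu+1}(z)+zI_{\mu+1}'(z)$ (equivalent to the forms you wrote), integrate by parts, and obtain \eqref{eq3.lem.resol.LpLq.pm1.1} from \eqref{eq2.lem.resol.LpLq.pm1.1} at $r=1$ together with \eqref{formula.const.2} and $|n|=1$. Apart from trivial bookkeeping attributions (e.g.\ $J_9$ arises only from the fourth summand since the inner integral of the second vanishes at $s=1$, and $J_{16}$ is the boundary term of the second summand rather than the fourth), the term-by-term structure, the cancellation of the diagonal contributions $\tau^{3}I_{\xi_n+1}K_{\xi_n-1}f_{\theta,n}$, and the emergence of the coefficients $\xi_n\pm1\pm\frac{\delta}{2}$ all check out exactly as in the paper's (omitted) computation.
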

%%%%%%%%%%
%
%
%%%%%%%%%%
\begin{remark}\label{rem.lem.decom.formula.resol}
\begin{enumerate}[(1)]
\item\label{item1.rem.lem.decom.formula.resol}
From \eqref{eq1.lem.resol.LpLq.pm1.1}--\eqref{eq3.lem.resol.LpLq.pm1.1}, we see that 
$$
\frac{c_{n,\lambda}[f_n]}{r} 
- \frac{1}{r} \int_1^r s^2 \Phi_{n,\lambda}[f_n](s) \dd s 
= 
\frac1r
\sum_{l=11,13,14,15} J_l[f_n](1) 
- \sum_{l=1}^{8} J_l[f_n](r). 
$$
Thus we do not take the term $J_9[f_n]$ into account when estimating $\mathcal{V}_n\big[\Phi_{n,\lambda}[f_n]\big]$.

\item\label{item2.rem.lem.decom.formula.resol}
Observe that $J_{7}[f_n]=-J_{16}[f_n]$ and that $J_{8}[f_n]=-J_{17}[f_n]$. 
\end{enumerate}
\end{remark}
%%%%%%%%%%
%
%
%%%%%
\begin{proofx}{Lemma \ref{lem.resol.LpLq.pm1.1}}
The equalities \eqref{eq1.lem.resol.LpLq.pm1.1}--\eqref{eq2.lem.resol.LpLq.pm1.1} can be proved by change of the order of integration, the recurrence relations (see \cite[Chapter $\mathrm{I}\hspace{-1.2pt}\mathrm{I}\hspace{-1.2pt}\mathrm{I}$ 3$\cdot$71 (3), (4)]{Watson(1944)})
\begin{align*}
z K_{\mu}(z)
&= 
(\mu-1) K_{\mu-1}(z) 
- z \frac{\dd K_{\mu-1}}{\dd z}(z), \\
z I_{\mu}(z)
&= 
(\mu+1) I_{\mu+1}(z) 
+ z \frac{\dd I_{\mu+1}}{\dd z}(z), 
\end{align*}
and integration by parts. We omit the details since they are analogous to those in the proof of \cite[Lemmas 3.6 and 3.9]{Maekawa(2017a)} corresponding to the case $\delta=0$. The equality \eqref{eq3.lem.resol.LpLq.pm1.1} follows from the definition \eqref{formula.const.2} and \eqref{eq2.lem.resol.LpLq.pm1.1} with $r=1$. The proof is complete. 
\end{proofx}
%%%%%
%

%
%%%%%%%%%%
\begin{lemma}\label{lem.resol.LpLq.pm1.2}
Let $|n|=1$ and $\ep\in(0,\pi)$ and let $(\alpha,\delta)\in\R^\ast\times\R_{\ge0}$. We have the following. 
\begin{enumerate}[(1)]
\item\label{item1.lem.resol.LpLq.pm1.2}
Let $l\in\{1,\ldots,17\}\setminus\{7,8,9,16,17\}$. For $q\in[1.\infty)$ and $f\in C^\infty_{0,\sigma}(\Omega)$, 
\begin{align*}
\sup_{r\ge1} 
r^{\frac2q} 
|r^{-1} J_l[f_n](r)|
&\le 
C |\lambda|^{-1} \|f_n\|_{L^q}, 
\quad 
\lambda\in\Sigma_{\pi-\ep} \cap \{|z|<1\}, 
\\
\sup_{r\ge1} 
|r^{-1} J_l[f_n](r)|
&\le C |\lambda|^{-1} \|f_n\|_{L^1}, 
\quad 
\lambda\in\Sigma_{\pi-\ep} \cap \{|z|<1\}. 
\end{align*}

\item\label{item2.lem.resol.LpLq.pm1.2}
Let $l\in\{7,8,16,17\}$. For $f\in C^\infty_{0,\sigma}(\Omega)$, 
\begin{align*}
\int_{1}^{\infty}
|r^{-1} J_l[f_n](r)| r \dd r 
&\le 
C |\lambda|^{-1} \|f_n\|_{L^1}, 
\quad 
\lambda\in\Sigma_{\pi-\ep} \cap \{|z|<1\}, \\
\sup_{r\ge1} 
|r^{-1} J_l[f_n](r)|
&\le 
C |\lambda|^{-1} \|f_n\|_{L^\infty}, 
\quad 
\lambda\in\Sigma_{\pi-\ep} \cap \{|z|<1\}, \\
\sup_{r\ge1} 
|r^{-1} J_l[f_n](r)|
&\le C \|f_n\|_{L^1}, 
\quad 
\lambda\in\Sigma_{\pi-\ep} \cap \{|z|<1\}. 
\end{align*}
\end{enumerate}
The constant $C$ depends only on $\alpha,\delta,\ep,q$. 
\end{lemma}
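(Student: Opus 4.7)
The plan is to handle all seventeen kernels $J_l[f_n]$ by pointwise estimation, using the asymptotics of the modified Bessel functions collected in Appendix \ref{appendix.bessel} together with H\"{o}lder's inequality in the radial variable. The key observation that keeps the computation manageable is that, by Lemma \ref{lem.eta.asymptot.}, the orders $\xi_n, \xi_n\pm 1$ differ from $1, 2, 0$ by $O(|\alpha|+\delta)$, so the numerical prefactors arising in the Bessel asymptotics are bounded uniformly in the parameters; the weights $s^{\pm\delta/2}$ contributed by the flux carrier are likewise benign for small $\delta$. Structurally the estimates therefore reduce to those in Maekawa \cite{Maekawa(2017a)} for the case $\delta=0$.

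For a generic kernel such as $J_1[f_n](r)$, I would split the inner integral on $s\in(\tau,r)$ and the outer integral on $\tau\in(1,r)$ at the thresholds $|\sqrt{\lambda}|s\sim 1$ and $|\sqrt{\lambda}|\tau\sim 1$. In the small-argument regime one uses $|I_{\xi_n}(z)|\lesssim |z|^{\Re\xi_n}$ and $|K_{\xi_n}(z)|\lesssim |z|^{-\Re\xi_n}$; in the large-argument regime, accessible because $\lambda\in\Sigma_{\pi-\ep}$, one uses the exponential bounds $|I_{\xi_n}(z)|\lesssim |z|^{-1/2} e^{\Re z}$ and $|K_{\xi_n}(z)|\lesssim |z|^{-1/2} e^{-\Re z}$. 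In each region the integrations in $s$ and $\tau$ produce a factor $|\lambda|^{-1}$ together with residual powers of $r$; applying H\"{o}lder's inequality against $f_n$ in one of the radial integrations yields the bound $r^{-2/q}|\lambda|^{-1}\|f_n\|_{L^q}$, and the uniform bound with $\|f_n\|_{L^1}$ follows by taking $q=1$.

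For the boundary kernels $J_7, J_8, J_{16}, J_{17}$, which arose by integration by parts and contain only a single integral, the product of the exponential envelopes of $I_{\xi_n+1}(\sqrt{\lambda}\,\cdot)$ and $K_{\xi_n-1}(\sqrt{\lambda}\,\cdot)$ yields a kernel of order $\min\{1,(|\sqrt{\lambda}|r)^{-1}\}$ in $r$. Testing this against $\|f_n\|_{L^\infty}$ gives the $|\lambda|^{-1}$ sup-norm bound, while integrating against $r\dd r$ combined with $\|f_n\|_{L^1}$ gives the weighted integral estimate; the crude sup-norm bound with $\|f_n\|_{L^1}$ and no $|\lambda|$-gain is immediate from the pointwise size of the Bessel functions near $r=1$.

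The main obstacle I expect is not analytical but combinatorial: there are seventeen kernels, each requiring several regime splits, and in each one must track perturbation corrections in $\alpha,\delta$ to the orders together with the weight factors $s^{\pm\delta/2}$. Once the Bessel asymptotics are in hand the computations are mechanical, so in the write-up I would present one or two representative cases in detail (typically $J_1$ and $J_7$), indicate the modifications needed for the other kernels, and refer to \cite{Maekawa(2017a)} for the parts that match the $\delta=0$ analysis verbatim.
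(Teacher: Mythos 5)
Your proposal is correct and follows essentially the same route as the paper: the paper also reduces each $J_l$ to the regime-split small/large-argument bounds for $K_{\xi_n\mp k}$ and $I_{\xi_n+k}$ (packaged there as the integral estimates of Lemmas \ref{lem.est.int.vel.K} and \ref{lem.est.int.vel.I}, with the split at $(\Re\sqrt{\lambda})^{-1}$), combined with H\"older against $f_n$, and likewise omits the case-by-case computations by referring to the $\delta=0$ analysis in \cite{Maekawa(2017a)}. Your treatment of the single-integral kernels $J_7,J_8,J_{16},J_{17}$ and the observation that the $\|f_n\|_{L^1}$ bound in (1) follows from the weighted bound with $q=1$ are consistent with that argument.
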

%%%%%%%%%%
%
%
%%%%%
\begin{proof}
Each of the estimates can be proved by Lemmas \ref{lem.est.int.vel.K} and \ref{lem.est.int.vel.I} in Appendix \ref{appendix.bessel}. We omit the calculations since they are analogous to the ones in the proof of \cite[Lemmas 3.7 and 3.10]{Maekawa(2017a)} corresponding to the case $\delta=0$. The proof is complete. 
\end{proof}
%%%%%
%

%
%%%%%%%%%%
\begin{lemma}\label{lem.resol.LpLq.pm1.3}
Let $|n|=1$ and $\ep\in(0,\pi)$ and let $(\alpha,\delta)\in\R^\ast\times\R_{\ge0}$. 
We have the following. 
\begin{enumerate}[(1)]
\item\label{item1.lem.resol.LpLq.pm1.3}
Let $l\in\{1,\ldots,17\}\setminus\{9\}$. For $1\le q<p\le\infty$ or $1<q\le p<\infty$ and $f\in C^\infty_{0,\sigma}(\Omega)$, 
\begin{align*}
\big\|(r,\theta) 
\mapsto 
r^{-1} J_l[f_n](r)
\big\|_{L^p}
&\le
C |\lambda|^{-1+\frac1q-\frac1p} \|f \|_{L^q}, 
\quad 
\lambda\in\Sigma_{\pi-\ep} \cap \{|z|<1\}. 
\end{align*}

\item\label{item2.lem.resol.LpLq.pm1.3}
For $q\in(1.\infty)$ and $f\in C^\infty_{0,\sigma}(\Omega)$, 
\begin{align*}
|c_{n,\lambda}[f_n]| 
&\le
C |\lambda|^{-1+\frac1q} \|f \|_{L^q}, 
\quad 
\lambda\in\Sigma_{\pi-\ep} \cap \{|z|<1\}. 
\end{align*}
\end{enumerate}
The constant $C$ depends only on $\alpha,\delta,\ep,q,p$. 
\end{lemma}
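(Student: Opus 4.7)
The plan is to regard each $r^{-1}J_l[f_n](r)$ as a radial integral operator acting on $f_n$ (equivalently on $g_n^{(1)}, g_n^{(2)}$, which are pointwise dominated by $|f_n|$ up to factors of order $|\xi_n|+|\delta|$), whose kernel is built from products of the modified Bessel functions $I_{\xi_n\pm 1}, I_{\xi_n}, K_{\xi_n\pm 1}, K_{\xi_n}$ evaluated at $\sqrt{\lambda}\,\cdot\,$. The target scaling $|\lambda|^{-1+1/q-1/p}$ is the standard 2D $L^q \to L^p$ resolvent scaling, and the natural way to extract it is the change of variables $r = \rho/\sqrt{|\lambda|}$, $s = \sigma/\sqrt{|\lambda|}$: the $|\lambda|^{-1}$ from the Jacobian combines with $|\lambda|^{1/q}$ and $|\lambda|^{-1/p}$ from rescaling the Lebesgue norms to produce the required power, while the rescaled kernel is built from $|\lambda|$-independent expressions $I_{\xi_n}(\sigma), K_{\xi_n}(\sigma)$, etc., on the rescaled exterior $\{|x|>\sqrt{|\lambda|}\}$.

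For the principal indices $l \in \{1,\dots,17\} \setminus \{7,8,9,16,17\}$, I would split the rescaled kernel into a near-diagonal piece whose behaviour matches the 2D Green kernel of $-\Delta$ (for which Hardy--Littlewood--Sobolev furnishes the $L^q \to L^p$ bound when $1 < q \le p < \infty$, and Young's inequality covers $1 \le q < p \le \infty$) and exponentially small off-diagonal pieces coming from $K_\nu(z) \sim z^{-1/2}e^{-z}$ at infinity. Schur's test closes the bound in the near-diagonal zone, paralleling the proof of \cite[Lemma~3.7]{Maekawa(2017a)} in the case $\delta = 0$; the extra weight factors $s^{\pm\delta/2}$ do not destabilize the Schur bounds for $|\delta|$ small. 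The exceptional indices $l \in \{7,8,16,17\}$ correspond to single rather than iterated integrals and are handled by Young's convolution inequality applied to the $L^1$ and $L^\infty$ bounds of Lemma~\ref{lem.resol.LpLq.pm1.2}(\ref{item2.lem.resol.LpLq.pm1.2}), with intermediate $L^p$ endpoints filled in by Riesz--Thorin interpolation.

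For part~(\ref{item2.lem.resol.LpLq.pm1.3}), relation~\eqref{eq3.lem.resol.LpLq.pm1.1} expresses $c_{n,\lambda}[f_n]$ as $\sum_{l \in \{11,13,14,15,17\}} J_l[f_n](1)$, and each summand can be written as a scalar pairing $\langle \mathcal{K}_l(1,\,\cdot\,;\lambda), h_l\rangle$ against $h_l \in \{g_n^{(1)}, g_n^{(2)}, f_{\theta,n}\}$. H\"older then yields $|J_l[f_n](1)| \le \|\mathcal{K}_l(1,\,\cdot\,;\lambda)\|_{L^{q'}(r\,dr)}\,\|f_n\|_{L^q}$, and applying the same rescaling $s = \sigma/\sqrt{|\lambda|}$ to the $L^{q'}$ kernel norm produces the target factor $|\lambda|^{-1+1/q}$: the $|\lambda|^{-1}$ comes from the Jacobian and the $|\lambda|^{1/q}$ from the rescaling of the $L^{q'}$ norm, dual to the $L^q$ rescaling of $f_n$.

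The main obstacle will be the bookkeeping in the Bessel asymptotics. Because $\Re\xi_n$ is close to $1$ for small $\alpha,\delta$, several weight integrals $\int s^{a\pm\delta/2}K_{\xi_n}(\sqrt{\lambda}s)\,ds$ and their $I_{\xi_n}$ counterparts sit near the borderline of convergence, and each index $l$ has to be checked individually to confirm that any borderline logarithms are either absent or absorbed into the constant $C(\alpha,\delta,\ep,q,p)$, and that the extra factors $s^{\pm\delta/2}$ do not spoil convergence in any of the regimes. Uniformity of the constants in $\lambda \in \Sigma_{\pi-\ep}\cap\{|z|<1\}$ follows from the standard small- and large-argument expansions of $I_\nu, K_\nu$ recalled in Appendix~\ref{appendix.bessel}.
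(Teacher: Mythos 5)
Your overall architecture for part (\ref{item1.lem.resol.LpLq.pm1.3}) --- direct Bessel-kernel estimates combined with interpolation --- and your use of the representation \eqref{eq3.lem.resol.LpLq.pm1.1} for part (\ref{item2.lem.resol.LpLq.pm1.3}) are in the same general spirit as the paper, which deduces (1) from the weighted endpoint bounds of Lemma \ref{lem.resol.LpLq.pm1.2} plus interpolation theorems (deferring the kernel computations to the cited $\delta=0$ references) and obtains (2) immediately by applying (1) with $p=\infty$ and evaluating at $r=1$. Your handling of the exceptional indices $l\in\{7,8,16,17\}$ by interpolating the three endpoint bounds of Lemma \ref{lem.resol.LpLq.pm1.2} (2), and your H\"older argument for $c_{n,\lambda}[f_n]$, are sound.

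The mechanism you propose for the principal indices, however, contains a genuine flaw. You model the near-diagonal part of the rescaled kernel on the two-dimensional Green kernel of $-\Delta$ and invoke Hardy--Littlewood--Sobolev. First, HLS applies to kernels $|x-y|^{-\gamma}$ with the exact relation $\frac1p=\frac1q-\frac{2-\gamma}2$ and $q<p$; it applies neither to a logarithmic kernel nor to the diagonal cases $1<q=p<\infty$ that the lemma asserts. Second, and more decisively, if the kernels really behaved like the full two-dimensional resolvent kernel $K_0(\sqrt{\lambda}\,|x-y|)$, the pair $(q,p)=(1,\infty)$, which the statement includes, would be false, since that kernel is unbounded at the diagonal. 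The lemma holds at this endpoint because, for a fixed angular mode $|n|=1$, the radial kernels (products of $I_{\xi_n}$, $K_{\xi_n}$ and their integrals) have no diagonal singularity at all; it is this mode-wise boundedness, not a Green-kernel/HLS structure, that the estimates exploit. Relatedly, your remark that borderline logarithms can be ``absorbed into the constant $C(\alpha,\delta,\ep,q,p)$'' conceals the real issue: a factor $\log(1/|\lambda|)$ cannot be absorbed into a $\lambda$-independent constant, and the bare scaling heuristic (Jacobian gives $|\lambda|^{-1}$, norm rescaling gives $|\lambda|^{\frac1q-\frac1p}$) is broken precisely by the fixed boundary $r=1$, which after rescaling becomes the $\lambda$-dependent inner radius $\sqrt{|\lambda|}$. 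One must verify, term by term, that the factors $|\lambda|^{\pm\frac{\Re\xi_n}{2}}$ cancel between the $I$- and $K$-integrals and that the leftover boundary contributions carry only harmless powers such as $|\lambda|^{\frac{\delta}4}\le1$ (using $\delta\ge0$); this is exactly the content of the weighted bounds of Lemma \ref{lem.resol.LpLq.pm1.2} and the appendix integral estimates, which your plan bypasses without an adequate substitute.
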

%%%%%%%%%%
%
%
%%%%%
\begin{proof}
(1) The estimate can be proved by Lemma \ref{lem.resol.LpLq.pm1.2} and interpolation theorems. We omit the details since they are analogous to those in the proof of \cite[Corollary 3.12]{Maekawa(2017a)} and \cite[Corollary 3.8]{Higaki(2019)} corresponding to the case $\delta=0$.

(2) The estimate follows from \eqref{eq3.lem.resol.LpLq.pm1.1} and (\ref{item1.lem.resol.LpLq.pm1.3}) with $p=\infty$. The proof is complete. 
\end{proof}
%%%%%
%

Next we estimate $\mathcal{V}_n\big[r\mapsto r^{-\frac{\delta}2} K_{\xi_n}(\sqrt{\lambda} r)\big]$ in \eqref{formula.vel.collected} and the terms in \eqref{formula.vor.collected}. 
%
%%%%%%%%%%
\begin{lemma}\label{lem.resol.LpLq.pm1.vor}
Let $|n|=1$ and $\ep\in(0,\pi)$ and let $(\alpha,\delta)\in\R^\ast\times\R_{\ge0}$. 
We have the following. 
\begin{enumerate}[(1)]
\item\label{item1.lem.resol.LpLq.pm1.vor}
For $p\in(1,\infty]$, 
$$
\big\|
\mathcal{V}_n\big[r\mapsto r^{-\frac{\delta}2} K_{\xi_n}(\sqrt{\lambda} r)\big]
\big\|_{L^p}
\le
C |\lambda|^{-\frac{\Re \xi_n}{2}-\frac1p}, 
\quad 
\lambda\in\Sigma_{\pi-\ep} \cap \{|z|<1\}. 
$$

\item\label{item2.lem.resol.LpLq.pm1.vor}
For $p\in[1,2]$, 
$$
\big\|(r,\theta) 
\mapsto 
r^{-\frac{\delta}2} K_{\xi_n}(\sqrt{\lambda} r) e^{in\theta} 
\big\|_{L^p}
\le C |\lambda|^{-\frac{\Re \xi_n}{2}-\frac1p+\frac12}, 
\quad 
\lambda\in\Sigma_{\pi-\ep} \cap \{|z|<1\}, 
$$
and for $p\in[2,\infty)$, 
$$
\big\|(r,\theta) 
\mapsto 
r^{-\frac{\delta}2} K_{\xi_n}(\sqrt{\lambda} r) e^{in\theta} 
\big\|_{L^p}
\le C |\lambda|^{-\frac{\Re \xi_n}{2}}, 
\quad 
\lambda\in\Sigma_{\pi-\ep} \cap \{|z|<1\}. 
$$

\item\label{item3.lem.resol.LpLq.pm1.vor}
For $p\in[1,\infty]$ and $f\in C^\infty_{0,\sigma}(\Omega)$, 
$$
\big\|(r,\theta) 
\mapsto 
\Phi_{n,\lambda}[f_n](r) 
e^{in\theta}
\big\|_{L^p}
\le 
C |\lambda|^{-\frac12} \|f_n\|_{L^p}, 
\quad 
\lambda\in\Sigma_{\pi-\ep} \cap \{|z|<1\}. 
$$
\end{enumerate}
The constant $C$ depends only on $\alpha,\delta,\ep,p$. 
\end{lemma}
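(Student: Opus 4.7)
The plan is to handle the three parts sequentially, each reducing to an explicit Bessel-function computation based on the asymptotic estimates in Appendix \ref{appendix.bessel}.

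First, for part (\ref{item2.lem.resol.LpLq.pm1.vor}), I would simply compute the $L^p$ norm as a one-dimensional weighted radial integral $2\pi\int_1^\infty r^{1-p\delta/2}|K_{\xi_n}(\sqrt{\lambda}r)|^p\,dr$ and split it at $r\sim|\lambda|^{-1/2}$. For $1\le r\lesssim|\lambda|^{-1/2}$, Lemma \ref{lem.est.bessel2} gives the small-argument bound $|K_{\xi_n}(\sqrt{\lambda}r)|\leq C|\sqrt{\lambda}r|^{-\Re\xi_n}$ (with a benign logarithmic correction since $\Re\xi_n$ is close to $1$), yielding a polynomial integral in $r$; for $r\gtrsim|\lambda|^{-1/2}$ the exponential decay of $K_{\xi_n}$ makes the tail lower order. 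The dichotomy $p\leq 2$ versus $p\geq 2$ reflects whether the leftover power in the $r$-integral is integrable at $r=1$ or saturates at the endpoint, giving respectively $|\lambda|^{-\Re\xi_n/2-1/p+1/2}$ and $|\lambda|^{-\Re\xi_n/2}$.

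For part (\ref{item1.lem.resol.LpLq.pm1.vor}), I would invoke the Biot--Savart representation \eqref{def.psi_n}--\eqref{def.Biot-Savart} with $\omega_n(r)=r^{-\delta/2}K_{\xi_n}(\sqrt{\lambda}r)$, noting that $\psi_n[\omega_n]$ is an explicit sum of three terms, namely the two indefinite integrals $r^{-|n|}\!\int_1^r s^{|n|+1-\delta/2}K_{\xi_n}(\sqrt{\lambda}s)\,ds$ and $r^{|n|}\!\int_r^\infty s^{-|n|+1-\delta/2}K_{\xi_n}(\sqrt{\lambda}s)\,ds$, plus the boundary correction $-F_n(\sqrt{\lambda})r^{-|n|}$ (using $d_n[\omega_n]=F_n(\sqrt{\lambda})$ when $|n|=1$). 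Each summand, together with its derivative contributing to $\mathcal{V}_{\theta,n}$, is of the form treated by Lemma \ref{lem.est.int.vel.K}; the extra $1/r$ factor in $\mathcal{V}_{r,n}$ is harmless on $\{r\ge1\}$. Summing the three contributions yields the claimed $L^p$ decay $|\lambda|^{-\Re\xi_n/2-1/p}$.

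For part (\ref{item3.lem.resol.LpLq.pm1.vor}), I would use the representation \eqref{formula.vor.2}, which writes $\Phi_{n,\lambda}[f_n]$ as a sum of four integral operators applied to $f_n$ (where the source terms $g_n^{(1)},g_n^{(2)}$ from \eqref{formula.g} are linear combinations of $f_{r,n},f_{\theta,n}$ with $O(1)$ coefficients since $|\xi_n\pm\delta/2|$ is bounded). Each operator has a kernel of the form
\begin{equation*}
\mathcal{K}(r,s)=r^a s^b K_{\xi_n}(\sqrt{\lambda}\max(r,s))\,I_{\xi_n}(\sqrt{\lambda}\min(r,s))
\end{equation*}
(or with $I_{\xi_n+1}$, $K_{\xi_n-1}$), and an $L^p\to L^p$ bound with constant $|\lambda|^{-1/2}$ follows from Schur's test whose hypotheses are exactly the $L^1$- and $L^\infty$-type estimates recorded in Lemmas \ref{lem.est.int.vel.K} and \ref{lem.est.int.vel.I}; the endpoint $p=1,\infty$ is handled directly and intermediate $p$ by interpolation. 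The factor $|\lambda|^{-1/2}$ arises from the product of the two Bessel factors after rescaling $\tau=\sqrt{|\lambda|}s$.

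The main technical obstacle, as for the analogous computations in \cite{Maekawa(2017a), Higaki(2019)}, is keeping precise track of how the corrections $\Re\xi_n-1$ and $\delta/2$ interact near $p=2$: several of the radial integrals diverge logarithmically in the degenerate limit $\xi_n=1$, $\delta=0$, and one must verify that the powers of $|\lambda|$ combine correctly into the stated exponents. Since the constants in Lemmas \ref{lem.est.bessel2}, \ref{lem.est.int.vel.K}, \ref{lem.est.int.vel.I} are uniform in the order for $\xi_n$ in a compact neighborhood of $1$, the smallness of $\alpha,\delta$ makes these corrections absorbable, and the overall structure reduces to that of the $\delta=0$ problem treated in \cite{Maekawa(2017a)} with only the cosmetic modification of the weight $s^{-\delta/2}$.
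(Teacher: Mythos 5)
Your proposal is correct and follows essentially the same route as the paper, whose own proof of this lemma is little more than a citation of the appendix estimates together with the $\delta=0$ computations of \cite{Maekawa(2017a),Higaki(2019)}: part (\ref{item1.lem.resol.LpLq.pm1.vor}) via the Biot--Savart formula, the identity $d_n[\omega_n]=F_n(\sqrt{\lambda})$ and Lemma \ref{lem.est.int.vel.K}, and parts (\ref{item2.lem.resol.LpLq.pm1.vor})--(\ref{item3.lem.resol.LpLq.pm1.vor}) via endpoint bounds plus interpolation. Two small points of comparison: for (\ref{item2.lem.resol.LpLq.pm1.vor}) the paper avoids your radial computation entirely by noting $r^{-\delta/2}\le 1$ on $\{r\ge1\}$ for $\delta\ge0$ and quoting the $\delta=0$ estimate of $\|K_{\xi_n}(\sqrt{\lambda}\,\cdot)\|_{L^p}$ verbatim, while your direct split at $r\sim|\lambda|^{-1/2}$ gives the same (in fact slightly sharper) exponents; and for (\ref{item3.lem.resol.LpLq.pm1.vor}) the recorded integrals you actually need are those of Lemma \ref{lem.est.int.vor} (weights $s^{1\pm\frac{\delta}{2}}$ adapted to the vorticity formula \eqref{formula.vor.2}), not Lemmas \ref{lem.est.int.vel.K}--\ref{lem.est.int.vel.I}, although your citation can be repaired by the trivial comparison $s^{\delta/2}\le s^{1-\delta/2}$ for $s\ge1$ and small $\delta\ge0$. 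Likewise the small-argument bound $|K_{\xi_n}(z)|\le C|z|^{-\Re\xi_n}$ is Lemma \ref{lem.est.bessel1} (or follows from Lemma \ref{lem.est.bessel2} (\ref{item1.lem.est.bessel2}) since $\xi_n=1+\eta_n$); these misattributions do not affect the argument.
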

%%%%%%%%%%
%
%
%%%%%
\begin{proof}
(1) The estimate can be proved by Lemma \ref{lem.est.int.vel.K} and interpolation theorems. We omit the details since they are analogous to those in the proof of \cite[Proposition 3.17]{Maekawa(2017a)} and \cite[Proposition 3.9]{Higaki(2019)} corresponding to the case $\delta=0$.

(2) The estimate follows from the inequality 
$$
\big\|(r,\theta) 
\mapsto 
r^{-\frac{\delta}2} K_{\xi_n}(\sqrt{\lambda} r) e^{in\theta} 
\big\|_{L^p}
\le 
\big\|(r,\theta) 
\mapsto 
K_{\xi_n}(\sqrt{\lambda} r) e^{in\theta} 
\big\|_{L^p}. 
$$
and the estimate of the right-hand in \cite[Lemma 3.22]{Maekawa(2017a)} and \cite[Lemma B.4]{Higaki(2019)}.

(3) The estimate can be proved by Lemma \ref{lem.est.int.vor} and interpolation theorems. We omit the details since they are analogous to those in the proof of \cite[Lemma 3.21]{Maekawa(2017a)} corresponding to the case $\delta=0$. The proof is complete. 
\end{proof}
%%%%%
%

%
%%%%%
\begin{proofx}{Proposition \ref{prop.resol.LpLq.pm1}}
Since $\mathcal{S}_2^\ep(\alpha)\subset \Sigma_{\frac34\pi-\ep}\subset \rho(-\mathbb{A}_V)$ by Corollary \ref{cor2.prop.est.Fn}, we see that $(\lambda+{\mathbb A}_V)^{-1} f$ exists for any $\lambda\in\mathcal{S}_2^\ep(\alpha)$. Let $\lambda\in \mathcal{S}_2^\ep(\alpha)$. By density argument, it suffices to prove \eqref{est1.prop.resol.LpLq.pm1}--\eqref{est2.prop.resol.LpLq.pm1} for $f\in C^\infty_{0,\sigma}(\Omega)$. From Corollaries \ref{cor1.prop.est.Fn} and \ref{lem.resol.LpLq.pm1.3} (\ref{item2.lem.resol.LpLq.pm1.3}), we have 
$$
\Big|
\frac{c_{n,\lambda}[f_n]}{F_n(\sqrt{\lambda})} 
\Big|
\le 
C 
|\lambda|^{-1+\frac1q+\frac{\Re\xi_n}{2}} 
\|f \|_{L^q}. 
$$
Thus, from \eqref{formula.vel.collected} and Lemma \ref{lem.resol.LpLq.pm1.1}, putting $p=2$ in Lemmas \ref{lem.resol.LpLq.pm1.3} and \ref{lem.resol.LpLq.pm1.vor}, we see that 
\begin{equation*}
\begin{split}
\|\mathcal{P}_n (\lambda+{\mathbb A}_{V})^{-1} f\|_{L^2} 
\le 
C |\lambda|^{-\frac32+\frac1q}
\|f\|_{L^q}, 
\end{split}
\end{equation*}
which is \eqref{est1.prop.resol.LpLq.pm1}. Also, from \eqref{formula.vor.collected}, putting $p=2$ in Lemma \ref{lem.resol.LpLq.pm1.vor}, we see that 
\begin{equation*}
\begin{split}
\|\oprot \mathcal{P}_n (\lambda+{\mathbb A}_{V})^{-1} f\|_{L^2} 
\le 
C |\lambda|^{-\frac12} 
\|f\|_{L^2}, 
\end{split}
\end{equation*}
which leads to \eqref{est2.prop.resol.LpLq.pm1} since $\|\oprot u\|_{L^2}=\|\nabla u\|_{L^2}$ for $u\in W^{1,2}_0(\Omega)^2\cap L^2_\sigma(\Omega)$. All the constants $C$ above are independent of $\lambda$. This completes the proof. 
\end{proofx}
%%%%%
%

%%%%%%%%%%%%%%%%%%%%
%%%%%%%%%%%%%%%%%%%%
\subsection{Proof of Proposition \ref{prop.resol.LpLq}}\label{subsec.proof.prop.resol.LpLq}
%%%%%%%%%%%%%%%%%%%%
%%%%%%%%%%%%%%%%%%%%

Proposition \ref{prop.resol.LpLq} is a consequence of Lemma \ref{lem.apriori.est.} and Propositions \ref{prop.resol.est.1} and \ref{prop.resol.LpLq.pm1}. 
%
%%%%%
\begin{proofx}{Proposition \ref{prop.resol.LpLq}}
Let $\ep\in(0,\frac{\pi}4)$ be given. The same consideration as in the proof of Corollary \ref{cor2.prop.est.Fn} shows that $\Sigma_{\frac34\pi-\ep}\subset\mathcal{S}_1^\frac{\ep}2(\alpha)\cup\mathcal{S}_2^\frac{\ep}2(\alpha)$ 
for sufficiently small $\alpha,\delta$ depending on $\ep$. In view of Proposition \ref{prop.resol.est.1}, the desired estimates \eqref{est1.prop.resol.LpLq}--\eqref{est2.prop.resol.LpLq} follow if we prove 
\begin{equation}\label{est1.proof.prop.resol.LpLq}
\begin{split}
\|(\lambda+{\mathbb A}_V)^{-1} f\|_{L^2} 
& \le 
C |\lambda|^{-\frac32+\frac1q} \|f\|_{L^q}, 
\quad 
\lambda\in\mathcal{S}_2^\ep(\alpha), \\
\|\nabla (\lambda+{\mathbb A}_V)^{-1} f\|_{L^2} 
& \le 
C |\lambda|^{-1+\frac1q} \|f\|_{L^2}, 
\quad 
\lambda\in\mathcal{S}_2^\ep(\alpha) 
\end{split}
\end{equation}
for $f\in C^\infty_{0,\sigma}(\Omega)$ and $\ep\in(0,\frac{\pi}4)$. Let $\lambda\in\mathcal{S}_2^\ep(\alpha)$ and set $v=(\lambda+{\mathbb A}_V)^{-1} f$. Thanks to Proposition \ref{prop.resol.LpLq.pm1}, we only need to estimate $v_{\neq} = v - \sum_{|n|=1} v_n$. Lemma \ref{lem.apriori.est.} (\ref{item2.lem.apriori.est.1}) implies that 
\begin{align*}
\begin{split}
|\lambda| \|v_{\neq}\|_{L^2}^2  + \|\nabla v_{\neq}\|_{L^2}^2 
&\le 
C \|f\|_{L^q}^{\frac{2q}{3q-2}} 
\|v_{\neq}\|_{L^2}^{\frac{4(q-1)}{3q-2}}, 
\quad 
\lambda\in\mathcal{S}_2^\ep(\alpha)
\end{split}
\end{align*}
with a constant $C=C(\ep)$, and hence that 
\begin{equation*}
\begin{split}
\|v_{\neq}\|_{L^2} 
& \le 
C |\lambda|^{-\frac32+\frac1q} \|f\|_{L^q}, 
\quad 
\lambda\in\mathcal{S}_2^\ep(\alpha), \\
\|\nabla v_{\neq}\|_{L^2} 
& \le 
C |\lambda|^{-1+\frac1q} \|f\|_{L^q}, 
\quad 
\lambda\in\mathcal{S}_2^\ep(\alpha). 
\end{split}
\end{equation*}
Hence the proof is complete. 
\end{proofx}
%%%%%
%

%%%%%%%%%%%%%%%%%%%%%%%%%%%%%%%%%%%%%%%%
%%%%%%%%%%%%%%%%%%%%%%%%%%%%%%%%%%%%%%%%
\appendix
%%%%%%%%%%%%%%%%%%%%%%%%%%%%%%%%%%%%%%%%
%%%%%%%%%%%%%%%%%%%%%%%%%%%%%%%%%%%%%%%%

%%%%%%%%%%%%%%%%%%%%%%%%%%%%%%%%%%%%%%%%
%%%%%%%%%%%%%%%%%%%%%%%%%%%%%%%%%%%%%%%%
\section{Modified Bessel function}\label{appendix.bessel}
%%%%%%%%%%%%%%%%%%%%%%%%%%%%%%%%%%%%%%%%
%%%%%%%%%%%%%%%%%%%%%%%%%%%%%%%%%%%%%%%%

We summarize the facts about the modified Bessel functions. Our main references are \cite{Watson(1944), Andrews-Askey-Roy(1999)}. The modified Bessel function of the first kind $I_\mu(z)$ of order $\mu$ is defined by 
\begin{align}\label{def.I}
I_\mu(z) 
= 
\Big(\frac{z}{2}\Big)^\mu 
\sum_{m=0}^{\infty} 
\frac{1}{m!\Gamma(\mu+m+1)} \Big(\frac{z}{2}\Big)^{2m}, 
\quad 
z\in \C\setminus \R_{\le0}, 
\end{align}
where $\Gamma(z)$ is the Gamma function, the second kind $K_\mu(z)$ of order $\mu\notin\Z$ is by 
\begin{align}\label{def.K}
K_\mu(z) 
= 
\frac{\pi}{2}
\frac{I_{-\mu}(z) - I_\mu(z)}{\sin(\mu \pi)}, 
\quad 
z \in \C\setminus\R_{\le0}, 
\end{align}
and $K_n(z)$ of order $n\in\Z$ is by the limit of $K_{\mu}(z)$ in \eqref{def.K} as $\mu\to n$. In this paper, we exclusively consider the case where the order $\mu$ satisfies $\mu\notin\Z$ and $\Re\mu>0$.

The functions $K_\mu(z)$ and $I_\mu(z)$ are linearly independent solutions of 
$$
-\frac{\dd^2 \omega}{\dd z^2} 
- \frac{1}{z} \frac{\dd \omega}{\dd z} 
+ \Big(1+\frac{\mu^2}{z^2} \Big) \omega = 0, 
\quad 
z\in \C\setminus \R_{\le0}, 
$$
with the Wronskian 
\begin{align}\label{def.Wronskian}
\det 
\left(
\begin{array}{cc}
K_\mu(z) 
& I_\mu(z) \\
\displaystyle{\frac{\dd K_\mu}{\dd z}(z)} 
& \displaystyle{\frac{\dd I_\mu}{\dd z}(z)}
\end{array}
\right)
=
\frac1z. 
\end{align}
It is well known that $I_\mu(z)$ grows exponentially and $K_\mu(z)$ decays exponentially as $|z|\to\infty$ in $\Sigma_{\frac{\pi}2}$; see \cite[Section 4.12]{Andrews-Askey-Roy(1999)}. As an integral representation useful in Section \ref{sec.quant.anal.disc.spec}, we have 
\begin{align}\label{rep.K.int}
K_{\mu}(z) 
= \frac12 \int_{0}^{\infty} 
e^{-\frac{z}2(t+\frac1{t})} t^{-\mu-1} \dd t, 
\quad 
z\in \Sigma_{\frac{\pi}2}, 
\end{align}
which can be deduced by the formula \cite[Chapter $\mathrm{V}\hspace{-1.2pt}\mathrm{I}$ 6$\cdot$22 (5)]{Watson(1944)} and change of variables.

Collected below are the estimates involving $K_\mu(z)$ and $I_\mu(z)$ used in this paper. Each of them can be found in the references \cite{Andrews-Askey-Roy(1999), Watson(1944)} or follows from a simple calculation using Lemma \ref{lem.est.bessel1}. and hence we omit the proof. For the details when $\delta=0$, we refer to \cite[Lemma 3.31 and Appendix A]{Maekawa(2017a)} and to \cite{Higaki(2019)} studying the dependence on $\alpha$ in the estimates.

We recall that the constants $\eta_n$ and $\xi_n$ are defined in \eqref{def.1delta.eta} and \eqref{def.xi}, respectively.

%
%%%%%%%%%%
\begin{lemma}\label{lem.est.bessel1}
Let $\Re\mu>0$, $\ep\in(0,\frac{\pi}{2})$ and $M>0$. We have 
\begin{align*}
|K_{\mu}(z)|
&\le 
C |z|^{-\Re\mu},
\quad 
z \in\Sigma_{\frac{\pi}2} \cap \{|z|< M\}, \\
|K_{\mu}(z)|
&\le C |z|^{-\frac12} e^{-\Re z},
\quad 
z \in\Sigma_{\frac{\pi}2} \cap \{|z|\ge M\}, \\
|I_{\mu}(z)| 
&\le 
C |z|^{\Re\mu}, 
\quad 
z \in\Sigma_{\frac{\pi}2} \cap \{|z|< M\}, \\
|I_{\mu}(z)|
&\le C |z|^{-\frac12} e^{\Re z},
\quad 
z \in\Sigma_{\frac{\pi}2-\ep} \cap \{|z|\ge M\}. 
\end{align*}
The constant $C$ depends on $\mu,\ep,M$. 
\end{lemma}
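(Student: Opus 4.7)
The plan is to split the analysis into the small-$|z|$ regime, where the power-series representations are decisive, and the large-$|z|$ regime, where an integral representation combined with the Laplace (saddle-point) method delivers the exponential factor.

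For the small-$|z|$ bounds I would read $|I_\mu(z)|\le C|z|^{\Re\mu}$ directly from \eqref{def.I}: the residual series $\sum_{m\ge 0}\frac{1}{m!\,\Gamma(\mu+m+1)}(z/2)^{2m}$ converges absolutely and is uniformly bounded on $\{|z|\le M\}$ by a constant depending only on $\mu$ and $M$, so the prefactor $(z/2)^\mu$ supplies the claimed rate. For $K_\mu$, since $\Re\mu>0$ and $\mu\notin\Z$, in the combination \eqref{def.K} the term $I_{-\mu}(z)$ has leading behavior $(z/2)^{-\mu}/\Gamma(1-\mu)$ near the origin and dominates $I_\mu(z)$; the prefactor $\pi/(2\sin(\mu\pi))$ is a fixed finite constant, yielding $|K_\mu(z)|\le C|z|^{-\Re\mu}$.

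For the large-$|z|$ bound on $K_\mu$ I would start from the integral representation \eqref{rep.K.int}. The substitution $t=e^s$ converts it into $K_\mu(z)=\frac12\int_{-\infty}^{\infty} e^{-z\cosh s}\,e^{-\mu s}\dd s$, whose phase $\cosh s=1+s^2/2+O(s^4)$ has a unique saddle at $s=0$. Factoring out $e^{-z}$ and rescaling $s=u/\sqrt{z}$, a standard Laplace-method argument, valid on all of $\Sigma_{\pi/2}$ since there $\Re z>0$ and no contour deformation is required, gives $|K_\mu(z)|\le C|z|^{-1/2} e^{-\Re z}$ uniformly on $\{|z|\ge M\}$; the $|z|^{-1/2}$ factor reflects the Gaussian width of the saddle.

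For the large-$|z|$ bound on $I_\mu$ the natural approach is to use a Schl\"afli-type contour integral (see \cite[Chapter VI]{Watson(1944)}) to which the saddle-point method applies, with the saddle now contributing $e^{+z}$ instead of $e^{-z}$. The hard part is the Stokes phenomenon at $|\oparg z|=\pi/2$: a secondary saddle on the returning part of the contour becomes comparable to the leading one as $|\oparg z|\to\pi/2$, which is precisely why the statement must be restricted to $\Sigma_{\pi/2-\epsilon}$ and why the constant $C$ depends on $\epsilon$. Since the statement is entirely classical and the paper does not need a novel proof, I would close the argument by invoking the uniform asymptotic expansions in \cite[Section 4.12]{Andrews-Askey-Roy(1999)}.
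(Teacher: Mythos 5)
The paper never proves this lemma: it is listed among the classical Bessel estimates whose proofs are omitted with a citation of \cite{Watson(1944), Andrews-Askey-Roy(1999)}, so your job was to reconstruct a standard argument. Most of your sketch does that correctly. The small-$|z|$ bounds follow exactly as you say from \eqref{def.I} and \eqref{def.K} (with $\mu\notin\Z$, the paper's standing assumption, and with the constant absorbing $|\sin(\mu\pi)|^{-1}$ and the bounded factor $e^{|\Im\mu|\,|\oparg z|}$ coming from $|(z/2)^{\pm\mu}|$), and your discussion of $I_\mu$ for large $|z|$, including the Stokes-phenomenon explanation of why the sector must shrink to $\Sigma_{\frac{\pi}{2}-\ep}$ and the closing appeal to the uniform expansions in \cite[Section 4.12]{Andrews-Askey-Roy(1999)}, is acceptable and consistent with what the paper implicitly relies on.

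The weak step is the large-$|z|$ bound for $K_\mu$, which is claimed (and needed) uniformly on the full half-plane sector $\Sigma_{\frac{\pi}{2}}$, where $\Re z$ can be arbitrarily small compared with $|z|$. Your assertion that a Laplace-method argument on the real contour works there ``since $\Re z>0$ and no contour deformation is required'' does not hold up: taking absolute values in $K_\mu(z)=\tfrac12\int_{-\infty}^{\infty}e^{-z\cosh s}e^{-\mu s}\dd s$ only controls the integrand through $e^{-\Re z\,\cosh s}$, and since $\cosh s-1\ge s^{2}/2$ this yields at best a bound of the form $C(\Re z)^{-1/2}e^{-\Re z}$, with a further loss of order $(\Re z)^{-\Re\mu}$ from the region $s\to-\infty$ where $|e^{-\mu s}|$ grows; this is much weaker than $C|z|^{-1/2}e^{-\Re z}$ as $\oparg z\to\pm\frac{\pi}{2}$. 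Equivalently, your rescaling $s=u/\sqrt{z}$ rotates the contour in the complex $s$-plane, so a genuine contour deformation (steepest descent through the saddle adapted to $\oparg z$), or a stationary-phase argument exploiting the oscillation $e^{-i\Im z\cosh s}$, is unavoidable if you want the factor $|z|^{-1/2}$ uniformly up to the boundary rays. The cleanest repair is the same move you make for $I_\mu$: invoke the classical uniform asymptotics $K_\mu(z)=\sqrt{\pi/(2z)}\,e^{-z}\big(1+O(|z|^{-1})\big)$, valid in any sector $|\oparg z|\le\frac{3}{2}\pi-\delta$ and hence on all of $\Sigma_{\frac{\pi}{2}}$ (see \cite{Watson(1944)} or \cite[Section 4.12]{Andrews-Askey-Roy(1999)}), which gives the stated bound for $|z|\ge M$ at once.
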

%%%%%%%%%%
%

%
%%%%%%%%%%
\begin{lemma}\label{lem.est.bessel2}
Let $|n|=1$. We have the following. 
\begin{enumerate}[(1)]
\item\label{item1.lem.est.bessel2}
For sufficiently small $\alpha,\delta\in\R$,
$$
K_{1+\eta_n}(z) 
= 
\frac{\Gamma(1+\eta_n)}{2} 
\Big(\frac{z}{2}\Big)^{-1-\eta_n}
+ R^{(1)}_n(z),
\quad 
z \in\Sigma_{\frac{\pi}2} \cap \{|z|<1\}.
$$
Here $R^{(1)}_n(z)$ is the remainder and satisfies
$$
|R^{(1)}_n(z)| 
\le 
C|z|^{1-\Re \eta_n} 
\big(1 + \big|\log|z|\big|\big), 
\quad 
z \in\Sigma_{\frac{\pi}2} \cap \{|z|<1\}.
$$

\item\label{item2.lem.est.bessel2}
For sufficiently small $\alpha,\delta\in\R$, 
\begin{align*}
K_{\eta_n}(z) 
= 
\frac{\pi}{2\sin(\eta_n\pi)}
\bigg(
\frac{1}{\Gamma(1-\eta_n)} \Big(\frac{z}{2}\Big)^{-\eta_n}
- \frac{1}{\Gamma(1+\eta_n)} \Big(\frac{z}{2}\Big)^{\eta_n}
\bigg) 
+ \tilde{R}^{(1)}_n(z), & \\
\quad 
z \in\Sigma_{\frac{\pi}2} \cap \{|z|<1\}.&
\end{align*}
Here $\tilde{R}^{(1)}_n(z)$ is the remainder and satisfies
$$
|\tilde{R}^{(1)}_n(z)| 
\le C|z|^{2-\Re\eta_n}
\big(1 + \big|\log|z|\big|\big), 
\quad 
z \in\Sigma_{\frac{\pi}2} \cap \{|z|<1\}.
$$
\end{enumerate}
The constant $C$ is independent of $\alpha,\delta$. 
\end{lemma}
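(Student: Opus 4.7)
The plan is to derive both expansions directly from the series definition \eqref{def.I} of $I_\mu$ together with the identity \eqref{def.K}, which for $\mu = 1+\eta_n$ reduces to $K_{1+\eta_n}(z) = -\pi/(2\sin(\pi\eta_n)) \cdot (I_{-1-\eta_n}(z) - I_{1+\eta_n}(z))$. The apparent singularity $1/\sin(\pi\eta_n) \sim 1/(\pi \eta_n)$ as $\eta_n \to 0$ must be neutralized by explicit cancellations between like powers of $z$ coming from the two series; these cancellations are precisely what produce the logarithmic correction in the remainder bounds.

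For part (\ref{item1.lem.est.bessel2}), I would isolate the $m=0$ term of $I_{-1-\eta_n}(z)$, namely $(z/2)^{-1-\eta_n}/\Gamma(-\eta_n)$, and simplify it via $\Gamma(-\eta_n) = -\Gamma(1-\eta_n)/\eta_n$ together with the reflection identity $\Gamma(1+\eta_n)\Gamma(1-\eta_n)\sin(\pi\eta_n) = \pi\eta_n$, which reproduces exactly the stated leading part $\tfrac12\Gamma(1+\eta_n)(z/2)^{-1-\eta_n}$. The tail $m\geq 2$ of $I_{-1-\eta_n}$ is of order $|z|^{3-\Re\eta_n}$ and the tail $m\geq 1$ of $I_{1+\eta_n}$ is of order $|z|^{3+\Re\eta_n}$; grouping terms of equal powers of $z$ in pairs (after factoring out $\pi/\sin(\pi\eta_n)$) and re-applying the Gamma-reflection trick gives uniformly bounded contributions of order $|z|^{3-\Re\eta_n}$. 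The delicate part is the critical pair formed by the $m=1$ term of $I_{-1-\eta_n}$ (of order $|z|^{1-\eta_n}$) and the $m=0$ term of $I_{1+\eta_n}$ (of order $|z|^{1+\eta_n}$), each of which carries a factor $\sim 1/\eta_n$ after multiplication by $\pi/\sin(\pi\eta_n)$.

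To handle this critical pair I would write the two terms as $A(\eta_n)(z/2)^{1-\eta_n}$ and $B(\eta_n)(z/2)^{1+\eta_n}$, verify $A(0) = B(0)$ by a short Gamma-function computation, split the difference as
$$A(\eta_n)(z/2)^{1-\eta_n}-B(\eta_n)(z/2)^{1+\eta_n} = \bigl(A(\eta_n)-B(\eta_n)\bigr)(z/2)^{1-\eta_n}+B(\eta_n)\bigl((z/2)^{1-\eta_n}-(z/2)^{1+\eta_n}\bigr),$$
and use $(z/2)^{1-\eta_n}-(z/2)^{1+\eta_n} = -2(z/2)\sinh(\eta_n\log(z/2))$ together with $|\sinh(\eta_n\log(z/2))| \leq C|\eta_n|(1+\bigl|\log|z|\bigr|)$, valid uniformly for small $\eta_n$ and $|z|<1$. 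The smoothness of $A,B$ near $\eta_n = 0$ gives $A(\eta_n)-B(\eta_n) = O(\eta_n)$ with constants independent of $\alpha,\delta$. Dividing by $\sin(\pi\eta_n) \sim \pi\eta_n$ then yields the desired remainder bound $C|z|^{1-\Re\eta_n}(1+|\log|z||)$.

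Part (\ref{item2.lem.est.bessel2}) is handled the same way but is simpler, since the stated main term is exactly $\pi/(2\sin(\pi\eta_n))$ times the difference of the $m=0$ terms of $I_{\mp\eta_n}$; the remainder is then $\pi/(2\sin(\pi\eta_n))$ times the difference of the tails. The critical pair now consists of the $m=1$ terms of $I_{\mp\eta_n}$, of orders $|z|^{2\mp \Re\eta_n}$, and the same sinh identity produces the claimed bound of order $|z|^{2-\Re\eta_n}(1+|\log|z||)$; terms with $m\geq 2$ are strictly higher order. The main obstacle throughout is the bookkeeping of constants: one must carefully track every constant emerging from the Gamma-reflection step and from the Taylor expansion of $A(\eta_n)-B(\eta_n)$ to confirm independence of $\alpha$ and $\delta$, which is why explicit sinh and Taylor remainders are preferable to asymptotic $o(\cdot)$ notation.
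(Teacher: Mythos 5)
Your overall route is the natural one and, as far as one can tell, the same as the paper intends: the paper itself omits the proof of this lemma, referring to Watson and to the $\delta=0$ computations in Maekawa and in the author's earlier work, and those arguments are exactly the series expansion \eqref{def.I} combined with \eqref{def.K}. Your algebra for the leading term is correct (the $m=0$ coefficient $-\pi/\bigl(2\sin(\pi\eta_n)\Gamma(-\eta_n)\bigr)$ does reduce to $\Gamma(1+\eta_n)/2$ via the reflection formula), and you correctly identify that uniformity in $\alpha,\delta$ hinges on pairing terms whose coefficients coincide at $\eta_n=0$: the critical pair $m=1$ of $I_{-1-\eta_n}$ against $m=0$ of $I_{1+\eta_n}$ with $A(0)=B(0)=1$, and for the higher pairs the identity $1/\bigl(m!\,\Gamma(m-\eta_n)\bigr)=1/\bigl((m-1)!\,\Gamma(m+1+\eta_n)\bigr)$ at $\eta_n=0$; part (\ref{item2.lem.est.bessel2}) is indeed the same argument one order up.

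There is, however, one step that fails as stated: the inequality $|\sinh(\eta_n\opLog(z/2))|\le C|\eta_n|\bigl(1+\bigl|\log|z|\bigr|\bigr)$ is not uniform on $\Sigma_{\frac{\pi}{2}}\cap\{|z|<1\}$. For $\alpha\neq0$ one has $\Re\eta_n>0$, so $\Re\bigl(\eta_n\opLog(z/2)\bigr)\approx -\Re\eta_n\,\bigl|\log|z|\bigr|$ is unbounded as $|z|\to0$, and $|\sinh(\eta_n\opLog(z/2))|$ grows like $|z|^{-\Re\eta_n}$, which for fixed small $\alpha$ eventually dominates any constant times $|\eta_n|\bigl(1+\bigl|\log|z|\bigr|\bigr)$; note also that your bound would give a remainder $O\bigl(|z|(1+|\log|z||)\bigr)$, strictly stronger than what the lemma claims, which is a hint that something is off. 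The repair is immediate: with $w=\eta_n\opLog(z/2)$ use $|\sinh w|\le |w|\,e^{|\Re w|}$ together with $e^{|\Re w|}\le C|z|^{-\Re\eta_n}$ (the contribution of $\Im\eta_n\,\oparg z$ is bounded since $|\oparg z|<\tfrac{\pi}{2}$), giving $|\sinh(\eta_n\opLog(z/2))|\le C|\eta_n|\,|z|^{-\Re\eta_n}\bigl(1+\bigl|\log|z|\bigr|\bigr)$; after dividing by $\sin(\pi\eta_n)$ this produces exactly the stated bound $C|z|^{1-\Re\eta_n}\bigl(1+\bigl|\log|z|\bigr|\bigr)$, and indeed this extra factor is precisely why the remainder exponent in the lemma is $1-\Re\eta_n$ rather than $1$. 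The same correction should be made in your treatment of the higher-order pairs and in part (\ref{item2.lem.est.bessel2}); with it (and the harmless observation that $|(z/2)^{\pm\eta_n}|$ differs from $|z/2|^{\pm\Re\eta_n}$ only by a bounded factor in the sector), your proof is complete.
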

%%%%%%%%%%
%

%
%%%%%%%%%%
\begin{lemma}\label{lem.est.int.vel.K} 
Let $|n|=1$ and $\ep\in(0,\pi)$ and let $(\alpha,\delta)\in\R^\ast\times\R_{\ge0}$. For $\lambda\in\Sigma_{\pi-\ep} \cap \{|z|<1\}$, we have the following. 
\begin{enumerate}[(1)]
\item\label{item1.lem.est.int.K}
For $1\le \tau \le r \le (\Re\sqrt{\lambda})^{-1}$ and $k=0,1$,
$$
\int_\tau^r 
s^{2-k-\frac{\delta}{2}} 
|K_{\xi_n-k}(\sqrt{\lambda} s)| 
\dd s
\le 
C |\lambda|^{-\frac{\Re\xi_n}2+\frac{k}2} r^{3-\Re\xi_n-\frac{\delta}2}. 
$$

\item\label{item2.lem.est.int.K}
For $1 \le \tau\le (\Re\sqrt{\lambda})^{-1} \le r \le \infty$ and $k=0,1$, 
$$
\int_\tau^r 
s^{2-k-\frac{\delta}{2}} 
|K_{\xi_n-k}(\sqrt{\lambda} s)| 
\dd s
\le 
C |\lambda|^{-\frac32+\frac{k}{2}+\frac{\delta}4}. 
$$

\item\label{item3.lem.est.int.K}
For $(\Re\sqrt{\lambda})^{-1} \le \tau\le r \le \infty$ and $k=0,1$, 
$$
\int_\tau^r 
s^{2-k-\frac{\delta}{2}} 
|K_{\xi_n-k}(\sqrt{\lambda} s)| 
\dd s
\le 
C|\lambda|^{-\frac34} \tau^{\frac32-k-\frac{\delta}2} 
e^{-(\Re\sqrt{\lambda})\tau}.
$$

\item\label{item4.lem.est.int.K}
For $1\le \tau \le (\Re\sqrt{\lambda})^{-1}$ and $k=0,1$, 
$$
\int_\tau^\infty 
s^{-k-\frac{\delta}{2}} 
|K_{\xi_n-k}(\sqrt{\lambda} s)| 
\dd s
\le 
C
|\lambda|^{-\frac{\Re\xi_n}{2}+\frac{k}{2}} 
\tau^{1-\Re\xi_n-\frac{\delta}2}.
$$

\item\label{item5.lem.est.int.K}
For $\tau \ge (\Re\sqrt{\lambda})^{-1}$ and $k=0,1$, 
$$
\int_\tau^\infty 
s^{-k-\frac{\delta}{2}} 
|K_{\xi_n-k}(\sqrt{\lambda} s)| 
\dd s
\le 
C 
|\lambda|^{-\frac34} \tau^{-\frac12-k-\frac{\delta}{2}} 
e^{-(\Re\sqrt{\lambda})\tau}.
$$
\end{enumerate}
The constant $C$ depends only on $\alpha,\delta,\ep$. 
\end{lemma}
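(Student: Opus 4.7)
The plan is to apply the two-regime asymptotics for $K_\mu$ from Lemma \ref{lem.est.bessel1} directly to the integrand $s^{2-k-\frac{\delta}{2}} |K_{\xi_n-k}(\sqrt{\lambda}s)|$ (and its (4)--(5) analogue with $s^{-k-\frac{\delta}{2}}$), splitting the range of integration at the threshold $s = (\Re\sqrt{\lambda})^{-1}$, which corresponds to $|\sqrt{\lambda}s|\sim 1$. Since $\lambda\in\Sigma_{\pi-\ep}$ gives $\sqrt{\lambda}\in\Sigma_{\frac{\pi}{2}-\frac{\ep}{2}}$ and hence $|\sqrt{\lambda}| \le C_\ep \Re\sqrt{\lambda}\sim C_\ep |\lambda|^{1/2}$, this threshold is precisely the case distinction in the statement.

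In the small-argument regime $|\sqrt{\lambda}s|\lesssim 1$ covering (1), (4), and the first piece of (2), Lemma \ref{lem.est.bessel1} gives $|K_{\xi_n-k}(\sqrt{\lambda}s)|\le C|\sqrt{\lambda}s|^{-(\Re\xi_n - k)} = C|\lambda|^{-\frac{\Re\xi_n-k}{2}} s^{k-\Re\xi_n}$, so each integral reduces to an elementary power integral $\int s^{a-\frac{\delta}{2}}\,\dd s$ with $a = 2-\Re\xi_n$ for (1)--(2) and $a=-\Re\xi_n$ for (4). Under the smallness assumptions implicit throughout Section \ref{sec.resol.est} (via Lemma \ref{lem.eta.asymptot.}, $\Re\xi_n=1+O(\alpha^2+\delta^2)$), the exponent $3-\Re\xi_n-\frac{\delta}{2}$ is positive while $1-\Re\xi_n-\frac{\delta}{2}$ is negative, so the bound is attained at the upper limit in (1) and at the lower limit in (4), producing the stated $r^{3-\Re\xi_n-\frac{\delta}{2}}$ and $\tau^{1-\Re\xi_n-\frac{\delta}{2}}$ factors.

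In the exponential-decay regime $|\sqrt{\lambda}s|\gtrsim 1$ covering (3), (5), and the second piece of (2), Lemma \ref{lem.est.bessel1} gives $|K_{\xi_n-k}(\sqrt{\lambda}s)|\le C|\sqrt{\lambda}s|^{-1/2} e^{-(\Re\sqrt{\lambda})s} = C|\lambda|^{-1/4} s^{-1/2} e^{-(\Re\sqrt{\lambda})s}$. For (3) and (5) I translate $s=\tau+t$: in (5) I use $(1+t/\tau)^{a}\le 1$ for the nonpositive exponent $a=-\frac{1}{2}-k-\frac{\delta}{2}$, while in (3) I use $\tau\ge(\Re\sqrt{\lambda})^{-1}$ to get $(1+t/\tau)^{a}\le (1+(\Re\sqrt{\lambda})t)^{a}$ and then change variables $u=(\Re\sqrt{\lambda})t$; in both cases the residual $t$-integral is $\le C(\Re\sqrt{\lambda})^{-1}\le C|\lambda|^{-1/2}$, and together with the $|\lambda|^{-1/4}$ prefactor this yields the $|\lambda|^{-3/4}\tau^{\ldots}e^{-(\Re\sqrt{\lambda})\tau}$ bound of (3) and (5). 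For (2), I split the integral at $(\Re\sqrt{\lambda})^{-1}$: the small-argument piece is bounded by the (1)-estimate with $r=(\Re\sqrt{\lambda})^{-1}\sim|\lambda|^{-1/2}$ and the exponential piece by the (3)-estimate at $\tau=(\Re\sqrt{\lambda})^{-1}$, each contributing $C|\lambda|^{-\frac{3}{2}+\frac{k}{2}+\frac{\delta}{4}}$.

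The computation is elementary once the case split is set up; the one place to be vigilant is the sign of the relevant exponent $b := a+1-\frac{\delta}{2}$ in each elementary power integral, which must be tracked under the $\alpha,\delta$-dependence of $\Re\xi_n$ to justify that the bound is attained at the correct endpoint. For (4), there is also a tail contribution $\int_{(\Re\sqrt{\lambda})^{-1}}^{\infty} s^{-k-\frac{\delta}{2}}|K_{\xi_n-k}(\sqrt{\lambda}s)|\,\dd s$, which by the (5)-argument at $\tau=(\Re\sqrt{\lambda})^{-1}$ is $\le C|\lambda|^{-\frac{1}{2}+\frac{k}{2}+\frac{\delta}{4}}$; this is absorbed into the stated $|\lambda|^{-\frac{\Re\xi_n}{2}+\frac{k}{2}}\tau^{1-\Re\xi_n-\frac{\delta}{2}}$ by using $\tau\le(\Re\sqrt{\lambda})^{-1}$ and the negativity of $1-\Re\xi_n-\frac{\delta}{2}$. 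There is no serious obstacle here: the lemma parallels \cite[Lemma 3.31]{Maekawa(2017a)} for $\delta=0$, the only novelty being the uniform tracking of the extra $-\frac{\delta}{2}$ in all exponents.
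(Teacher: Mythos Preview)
Your proposal is correct and follows exactly the approach the paper indicates: the paper omits the proof, stating only that each estimate ``follows from a simple calculation using Lemma~\ref{lem.est.bessel1}'' and referring to \cite[Lemma 3.31 and Appendix A]{Maekawa(2017a)} for the case $\delta=0$. Your splitting at $s=(\Re\sqrt{\lambda})^{-1}$, the use of the small-argument bound $|K_{\xi_n-k}(\sqrt{\lambda}s)|\le C|\sqrt{\lambda}s|^{-(\Re\xi_n-k)}$ and the large-argument bound $|K_{\xi_n-k}(\sqrt{\lambda}s)|\le C|\sqrt{\lambda}s|^{-1/2}e^{-(\Re\sqrt{\lambda})s}$ from Lemma~\ref{lem.est.bessel1}, and the careful tracking of the sign of the power exponents (via $\Re\xi_n\approx 1$ from Lemma~\ref{lem.eta.asymptot.}) constitute precisely this ``simple calculation,'' now with the extra $-\frac{\delta}{2}$ carried through.
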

%%%%%%%%%%
%

%
%%%%%%%%%%
\begin{lemma}\label{lem.est.int.vel.I}
Let $|n|=1$ and $\ep\in(0,\pi)$ and let $(\alpha,\delta)\in\R^\ast\times\R_{\ge0}$. For $\lambda\in\Sigma_{\pi-\ep} \cap \{|z|<1\}$, we have the following.
\begin{enumerate}[(1)]
\item\label{item1.lem.est.int.I}
For $1\le \tau \le (\Re\sqrt{\lambda})^{-1}$ and $k=0,1$, 
$$
\int_1^\tau 
s^{2-k-\frac{\delta}{2}} 
|I_{\xi_n+k}(\sqrt{\lambda} s)| 
\dd s
\le 
C |\lambda|^{\frac{\Re\xi_n}2+\frac{k}{2}} 
\tau^{3+\Re\xi_n-\frac{\delta}{2}}.
$$

\item\label{item2.lem.est.int.I}
For $\tau \ge (\Re\sqrt{\lambda})^{-1}$ and $k=0,1$, 
$$
\int_1^\tau 
s^{2-k-\frac{\delta}{2}} 
|I_{\xi_n+k}(\sqrt{\lambda} s)| 
\dd s
\le
C |\lambda|^{-\frac34} \tau^{\frac32-k-\frac{\delta}2} 
e^{(\Re\sqrt{\lambda})\tau}. 
$$

\item\label{item3.lem.est.int.I}
For $1\le r \le \tau \le (\Re\sqrt{\lambda})^{-1}$ and $k=0,1$, 
$$
\int_r^\tau 
s^{-k-\frac{\delta}{2}} 
|I_{\xi_n+k}(\sqrt{\lambda} s)| 
\dd s
\le
C |\lambda|^{\frac{\Re\xi_n}2+\frac{k}{2}}
\tau^{1+\Re\xi_n-\frac{\delta}{2}}.
$$

\item\label{item4.lem.est.int.I}
For $1\le r \le (\Re\sqrt{\lambda})^{-1} \le \tau$ and $k=0,1$, 
$$
\int_r^\tau 
s^{-k-\frac{\delta}{2}}  |I_{\xi_n+k}(\sqrt{\lambda} s)| 
\dd s
\le
C |\lambda|^{-\frac34} \tau^{-\frac12-k-\frac{\delta}2} 
e^{(\Re\sqrt{\lambda})\tau}.
$$

\item\label{item5.lem.est.int.I}
For $(\Re\sqrt{\lambda})^{-1} \le r \le \tau$ and $k=0,1$, 
$$
\int_r^\tau 
s^{-k-\frac{\delta}2}  |I_{\xi_n+k}(\sqrt{\lambda} s)| 
\dd s
\le
C |\lambda|^{-\frac34} \tau^{-\frac12-k-\frac{\delta}2} 
e^{(\Re\sqrt{\lambda})\tau}. 
$$
\end{enumerate}
The constant $C$ depends only on $\alpha,\delta,\ep$. 
\end{lemma}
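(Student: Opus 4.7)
The plan is to mirror the approach used for the $K$-integrals in Lemma \ref{lem.est.int.vel.K}, exploiting the pointwise bounds on $I_{\xi_n+k}$ from Lemma \ref{lem.est.bessel1}. Since $\lambda\in\Sigma_{\pi-\ep}$ implies $\sqrt{\lambda}\in\Sigma_{\frac{\pi}{2}-\frac\ep2}$ with $\Re\sqrt{\lambda}$ comparable to $|\sqrt{\lambda}|$ (with an $\ep$-dependent constant), the threshold $s=(\Re\sqrt{\lambda})^{-1}$ cleanly separates the small-argument regime $|\sqrt{\lambda}\,s|\lesssim 1$, where
\[
|I_{\xi_n+k}(\sqrt{\lambda}\,s)|\le C|\sqrt{\lambda}\,s|^{\Re\xi_n+k},
\]
from the large-argument regime $|\sqrt{\lambda}\,s|\gtrsim 1$, where
\[
|I_{\xi_n+k}(\sqrt{\lambda}\,s)|\le C|\sqrt{\lambda}\,s|^{-\frac12}e^{(\Re\sqrt{\lambda})s}.
\]
Every estimate in the lemma is then obtained by splitting the integration interval at $s=(\Re\sqrt{\lambda})^{-1}$, inserting the appropriate bound on each piece, and integrating directly.

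For (1), both endpoints lie in the small-argument regime, so the first bound reduces the task to computing $\int_1^\tau s^{2-k-\frac\delta2+\Re\xi_n+k}\dd s$; the exponent $2+\Re\xi_n-\frac\delta2$ is strictly positive for small $\delta$ (since $\Re\xi_n\to 1$ as $(\alpha,\delta)\to 0$ by Lemma \ref{lem.eta.asymptot.}), so the antiderivative is controlled at the upper endpoint, yielding $|\lambda|^{\frac{\Re\xi_n}{2}+\frac k2}\tau^{3+\Re\xi_n-\frac\delta2}$. For (2), we split $\int_1^\tau=\int_1^{(\Re\sqrt{\lambda})^{-1}}+\int_{(\Re\sqrt{\lambda})^{-1}}^{\tau}$; the first summand is controlled by (1) at $\tau=(\Re\sqrt{\lambda})^{-1}$, giving $|\lambda|^{-\frac32+\frac k2+\frac\delta4}$, which is absorbed into the target since $\tau(\Re\sqrt{\lambda})\ge 1$ and $e^{(\Re\sqrt{\lambda})\tau}\ge 1$. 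For the second summand, the large-argument bound gives integrand $C|\lambda|^{-\frac14}s^{\frac32-k-\frac\delta2}e^{(\Re\sqrt{\lambda})s}$, and the standard tail estimate
\[
\int_{(\Re\sqrt{\lambda})^{-1}}^{\tau}s^{\beta}e^{(\Re\sqrt{\lambda})s}\dd s\le \frac{C}{\Re\sqrt{\lambda}}\tau^{\beta}e^{(\Re\sqrt{\lambda})\tau}
\]
produces $|\lambda|^{-\frac34}\tau^{\frac32-k-\frac\delta2}e^{(\Re\sqrt{\lambda})\tau}$.

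Cases (3)--(5) follow the same template: (3) uses only the small-argument bound over $[r,\tau]\subset[1,(\Re\sqrt{\lambda})^{-1}]$ with integrand exponent $-k-\frac\delta2+\Re\xi_n+k=\Re\xi_n-\frac\delta2>0$, evaluated at the upper endpoint $\tau$. Cases (4) and (5) require splitting at $(\Re\sqrt{\lambda})^{-1}$ (when necessary) and applying the large-argument bound on the subinterval beyond the threshold; the dominating contribution always comes from the upper endpoint $\tau$ because of the exponential factor $e^{(\Re\sqrt{\lambda})s}$. The main obstacle is purely bookkeeping: one must track the combinations of algebraic exponents carefully to check that the antiderivatives are controlled at the right endpoint (positivity of exponents $3+\Re\xi_n-\frac\delta2$, $1+\Re\xi_n-\frac\delta2$, etc., for small $\alpha,\delta$), and verify that the boundary contributions at the splitting point $s=(\Re\sqrt{\lambda})^{-1}$ glue correctly with the claimed power-law $\times$ exponential on both sides. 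No new ideas beyond those already used for Lemma \ref{lem.est.int.vel.K} are required; the computations differ only through the harmless replacement $s^{-\frac\delta 2}\to s^{-\frac\delta 2}$ of weights in the integrands.
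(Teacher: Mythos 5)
Your proposal is correct and is exactly the argument the paper has in mind: the paper omits the proof, stating that the lemma "follows from a simple calculation using Lemma \ref{lem.est.bessel1}" (analogously to the $\delta=0$ case in Maekawa's work), and your splitting of the integrals at $s=(\Re\sqrt{\lambda})^{-1}$, inserting the small-argument bound $|I_{\xi_n+k}(z)|\le C|z|^{\Re\xi_n+k}$ and the large-argument bound $|I_{\xi_n+k}(z)|\le C|z|^{-\frac12}e^{\Re z}$, and using the tail estimate $\int s^{\beta}e^{(\Re\sqrt{\lambda})s}\dd s\le C(\Re\sqrt{\lambda})^{-1}\tau^{\beta}e^{(\Re\sqrt{\lambda})\tau}$ is precisely that calculation, with the exponents and absorption of the boundary pieces checking out. (Only a cosmetic slip: your final sentence about the weight replacement should read $s^{0}\to s^{-\frac{\delta}{2}}$.)
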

%%%%%%%%%%
%

%
%%%%%%%%%%
\begin{lemma}\label{lem.est.int.vor}
Let $|n|=1$ and $\ep\in(0,\pi)$ and let $(\alpha,\delta)\in\R^\ast\times\R_{\ge0}$. For $\lambda\in\Sigma_{\pi-\ep} \cap \{|z|<1\}$, we have the following.
\begin{enumerate}[(1)]
\item\label{item1.}
For $1\le \tau \le (\Re\sqrt{\lambda})^{-1}$, 
$$
\int_1^\tau 
s^{1-\frac{\delta}{2}} 
|I_{\xi_n}(\sqrt{\lambda} s)| 
\dd s
\le 
C |\lambda|^{\frac{\Re\xi_n}2} 
\tau^{2+\Re\xi_n-\frac{\delta}{2}}.
$$

\item\label{item2.}
For $\tau \ge (\Re\sqrt{\lambda})^{-1}$,
$$
\int_1^\tau
s^{1-\frac{\delta}{2}} 
|I_{\xi_n}(\sqrt{\lambda} s)| 
\dd s
\le
C |\lambda|^{-\frac34} 
\tau^{\frac12-\frac{\delta}{2}} 
e^{(\Re\sqrt{\lambda})\tau}.
$$

\item\label{item3.}
For $1 \le \tau\le (\Re\sqrt{\lambda})^{-1}$, 
$$
\int_\tau^\infty
s^{1-\frac{\delta}{2}} 
|K_{\xi_n}(\sqrt{\lambda} s)| 
\dd s
\le 
C
|\lambda|^{-\frac{\Re\xi_n}{2}-\frac12} 
\tau^{1-\Re\xi_n-\frac{\delta}2} 
+ C 
|\lambda|^{-1+\frac{\delta}4}.
$$

\item\label{item4.}
For $\tau \ge (\Re\sqrt{\lambda})^{-1}$, 
$$
\int_\tau^\infty 
s^{1-\frac{\delta}{2}} 
|K_{\xi_n}(\sqrt{\lambda} s)| 
\dd s
\le 
C |\lambda|^{-\frac34} 
\tau^{\frac12-\frac{\delta}{2}} 
e^{-(\Re\sqrt{\lambda})\tau}.
$$
\end{enumerate}
The constant $C$ depends only on $\alpha,\delta,\ep$. 
\end{lemma}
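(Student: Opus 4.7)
All four bounds follow from the pointwise asymptotics of $I_{\xi_n}$ and $K_{\xi_n}$ collected in Lemma~\ref{lem.est.bessel1}, combined with the sector condition $|\sqrt{\lambda}|\le C_\ep\Re\sqrt{\lambda}$ valid on $\Sigma_{\pi-\ep}$. I set $s_0:=(\Re\sqrt{\lambda})^{-1}$ and split every integral at $s_0$: on $\{s\le s_0\}$ the argument $\sqrt{\lambda}s$ stays bounded and the power-law bounds $|I_{\xi_n}(\sqrt{\lambda}s)|\le C|\sqrt{\lambda}s|^{\Re\xi_n}$, $|K_{\xi_n}(\sqrt{\lambda}s)|\le C|\sqrt{\lambda}s|^{-\Re\xi_n}$ apply, while on $\{s\ge s_0\}$ we use the exponential bounds $|I_{\xi_n}(\sqrt{\lambda}s)|\le C|\sqrt{\lambda}s|^{-1/2}e^{\Re\sqrt{\lambda}s}$ (valid because $\sqrt{\lambda}s\in\Sigma_{\pi/2-\ep/2}$) and $|K_{\xi_n}(\sqrt{\lambda}s)|\le C|\sqrt{\lambda}s|^{-1/2}e^{-\Re\sqrt{\lambda}s}$. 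Lemma~\ref{lem.eta.asymptot.} gives $\Re\xi_n=1+O(\alpha^2+\delta^2)$, so the exponents $1-\delta/2+\Re\xi_n$ and $2-\delta/2-\Re\xi_n$ appearing in the antiderivatives are strictly positive and bounded away from zero for sufficiently small $\alpha,\delta$.

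For~(1), the entire range $[1,\tau]$ lies in the power-law regime, and direct integration of $|\sqrt{\lambda}|^{\Re\xi_n}s^{1-\delta/2+\Re\xi_n}$ yields the claimed $C|\lambda|^{\Re\xi_n/2}\tau^{2+\Re\xi_n-\delta/2}$. For~(2), the piece on $[1,s_0]$ is controlled by~(1) at $\tau=s_0$, giving $C|\lambda|^{-1+\delta/4}$; since $\Re\sqrt{\lambda}\,\tau\ge1$ for $\tau\ge s_0$, one has $|\lambda|^{-3/4}\tau^{1/2-\delta/2}e^{\Re\sqrt{\lambda}\tau}\ge c|\lambda|^{-1+\delta/4}$, so this piece is absorbed. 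The piece on $[s_0,\tau]$ reduces to $|\sqrt{\lambda}|^{-1/2}\int_{s_0}^{\tau}s^{1/2-\delta/2}e^{\Re\sqrt{\lambda}s}\,\dd s$, and the substitution $u=\Re\sqrt{\lambda}(\tau-s)$ together with $(\tau-u/\Re\sqrt{\lambda})^{1/2-\delta/2}\le\tau^{1/2-\delta/2}$ produces $C|\lambda|^{-3/4}\tau^{1/2-\delta/2}e^{\Re\sqrt{\lambda}\tau}$.

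For~(3), on $[\tau,s_0]$ the power-law bound gives $|\sqrt{\lambda}|^{-\Re\xi_n}\int_{\tau}^{s_0}s^{1-\delta/2-\Re\xi_n}\,\dd s$; since $2-\delta/2-\Re\xi_n$ is bounded below, the antiderivative splits into two endpoint contributions. The upper endpoint produces $C|\lambda|^{-1+\delta/4}$. The lower endpoint produces $C|\sqrt{\lambda}|^{-\Re\xi_n}\tau^{2-\delta/2-\Re\xi_n}$, which equals $(|\sqrt{\lambda}|\tau)\cdot C|\lambda|^{-\Re\xi_n/2-1/2}\tau^{1-\Re\xi_n-\delta/2}$ and is therefore dominated by the first summand of the claim because $|\sqrt{\lambda}|\tau\le C$ in this range. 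The tail $[s_0,\infty)$ is handled by the substitution $u=\Re\sqrt{\lambda}s$, giving $C|\lambda|^{-1+\delta/4}$. Case~(4) uses only the exponential bound for $K_{\xi_n}$: the change of variables $u=\Re\sqrt{\lambda}(s-\tau)$ together with $(\tau+u/\Re\sqrt{\lambda})^{1/2-\delta/2}\le C\tau^{1/2-\delta/2}(1+u)^{1/2-\delta/2}$ (using $\Re\sqrt{\lambda}\tau\ge1$) directly yields $C|\lambda|^{-3/4}\tau^{1/2-\delta/2}e^{-\Re\sqrt{\lambda}\tau}$.

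The only delicate step is case~(3): one has to retain both endpoint contributions of the antiderivative on $[\tau,s_0]$ to produce the two-term form of the bound, and recognize that the factor $|\sqrt{\lambda}|\tau\le C$ is precisely what converts the lower endpoint into the $|\lambda|^{-\Re\xi_n/2-1/2}\tau^{1-\Re\xi_n-\delta/2}$ form; otherwise the proof is a direct bookkeeping exercise based on the two regimes of Bessel asymptotics and the elementary inequality $\int_a^\infty u^\sigma e^{-u}\,\dd u\le C_\sigma a^\sigma e^{-a}$ for $a\ge1$. The entire argument is parallel to the $\delta=0$ case treated in \cite{Maekawa(2017a)}.
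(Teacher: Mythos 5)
Your proof is correct and is essentially the argument the paper intends: the paper omits the proof, stating that the lemma "follows from a simple calculation using Lemma \ref{lem.est.bessel1}" (as in the $\delta=0$ case of \cite{Maekawa(2017a)}), and your splitting at $s_0=(\Re\sqrt{\lambda})^{-1}$, with the power-law bounds for $|\sqrt{\lambda}|s\le C_\ep$ and the exponential bounds for $|\sqrt{\lambda}|s\ge 1$ together with the sector inequality $|\sqrt{\lambda}|\le C_\ep\,\Re\sqrt{\lambda}$, is exactly that calculation, including the correct two-term bookkeeping in case (3). The only point worth making explicit is that several steps (positivity of exponents such as $\tfrac12-\tfrac{\delta}{2}$ and $2-\tfrac{\delta}{2}-\Re\xi_n$, and the use of Lemma \ref{lem.eta.asymptot.}) rely on the smallness of $\alpha,\delta$, which is the regime in force throughout the paper.
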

%%%%%%%%%%
%

%%%%%%%%%%%%%%%%%%%%%%%%%%%%%%%%%%%%%%%%
%%%%%%%%%%%%%%%%%%%%%%%%%%%%%%%%%%%%%%%%
\section{Homogeneous equation for vorticity}\label{app.hom.eq.vor}
%%%%%%%%%%%%%%%%%%%%%%%%%%%%%%%%%%%%%%%%
%%%%%%%%%%%%%%%%%%%%%%%%%%%%%%%%%%%%%%%%

For $\lambda\in\C\setminus\R_{\le0}$, we consider the homogeneous equation of \eqref{eq.polar.vor} 
$$
- \frac{\dd^2 \omega_n}{\dd r^2} 
- \frac{1+\delta}{r} \frac{\dd \omega_n}{\dd r} 
+ \Big(\lambda + \frac{n^2+i\alpha n}{r^2} \Big) \omega_n 
= 0, 
\quad 
r>1. 
$$
We will prove that its linearly independent solutions are, with $\xi_n$ defined in \eqref{def.xi}, 
\begin{align}\label{sols.eta.K.I}
r^{-\frac{\delta}2} K_{\xi_n}(\sqrt{\lambda} r)
\quad \text{and} \quad  
r^{-\frac{\delta}2} I_{\xi_n}(\sqrt{\lambda} r), 
\end{align}
and the Wronskian is $r^{-1-\delta}$. The proof is as follows. Applying the transformation 
\begin{align}\label{def.tx}
\omega_n(r) = r^{-\frac{\delta}2} \tilde{\omega}_n(r), 
\end{align}
we find that $\tilde{\omega}_n$ solves 
$$
-\frac{\dd^2 \tilde{\omega}_n}{\dd r^2} 
- \frac{1}{r} \frac{\dd \tilde{\omega}_n}{\dd r} 
+ \Big(
\lambda + \frac{\xi_n^2}{r^2} 
\Big)
\tilde{\omega}_n
= 0, \quad r>1. 
$$
By Appendix \ref{appendix.bessel}, its linearly independent solutions are 
$$
K_{\xi_n}(\sqrt{\lambda} r)
\quad \text{and} \quad 
I_{\xi_n}(\sqrt{\lambda} r). 
$$
Hence, by the inverse transformation of \eqref{def.tx}, we see that the desired solutions are \eqref{sols.eta.K.I}. One can easily compute the Wronskian using \eqref{def.Wronskian}. The proof is complete.

%%%%%%%%%%%%%%%%%%%%%%%%%%%%%%%%%%%%%%%%
%%%%%%%%%%%%%%%%%%%%%%%%%%%%%%%%%%%%%%%%
\section{Proof of Theorem \ref{thm.L2L2}}\label{app.proof.thm.L2L2}
%%%%%%%%%%%%%%%%%%%%%%%%%%%%%%%%%%%%%%%%
%%%%%%%%%%%%%%%%%%%%%%%%%%%%%%%%%%%%%%%%

Let $\ep\in(0,\frac{\pi}4)$ and fix a number $\phi\in(\frac{\pi}2,\frac{3}4\pi-\ep)$. Taking $b\in(0,1)$ and a curve $\gamma_b$ in $\C$
$$
\gamma_b 
= \{|\oparg z|=\phi, \mkern9mu |z|\ge b \} \cup \{|\oparg z|\le\phi, \mkern9mu |z|=b \}
$$
oriented counterclockwise, we use a representation of $\{e^{-t {\mathbb A}_V}\}_{t\ge0}$ in the Dunford integral 
$$
e^{-t {\mathbb A}_V} 
=
\frac{1}{2\pi i}
\int_{\gamma_b}
e^{t \lambda }(\lambda+{\mathbb A}_V)^{-1} 
\dd \lambda,
\quad 
t>0. 
$$
From \eqref{est1.prop.resol.LpLq} for $q=2$ in Proposition \ref{prop.resol.LpLq}, we see that $\{e^{-t {\mathbb A}_V}\}_{t\ge0}$ is bounded in $L^2_\sigma(\Omega)$, which implies the first line of \eqref{est.thm.L2L2}. From \eqref{est2.prop.resol.LpLq}, letting $t>0$ and $f\in L^2_\sigma(\Omega)$, 
\begin{align*}
\|\nabla e^{-t {\mathbb A}_V} f\|_{L^2}
&\le 
\varlimsup_{b\to0}
\int_{\gamma_b}
\|e^{t \lambda} \nabla(\lambda+{\mathbb A}_V)^{-1} f \|_{L^2} 
|\dd \lambda| \\
&\le 
C\|f\|_{L^2}
\int_{0}^{\infty} 
s^{-\frac12} e^{(\cos \phi)ts} 
\dd s, 
\end{align*}
which implies the second line of \eqref{est.thm.L2L2}. This completes the proof of Theorem \ref{thm.L2L2}.

%%%%%%%%%%%%%%%%%%%%%%%%%%%%%%%%%%%%%%%%
%%%%%%%%%%%%%%%%%%%%%%%%%%%%%%%%%%%%%%%%
\subsection*{Acknowledgements}
The author is partially supported by JSPS KAKENHI Grant Number JP 20K14345. 
%%%%%%%%%%%%%%%%%%%%%%%%%%%%%%%%%%%%%%%%
%%%%%%%%%%%%%%%%%%%%%%%%%%%%%%%%%%%%%%%%

%%%%%%%%%%%%%%%%%%%%%%%%%%%%%%%%%%%%%%%%
%%%%%%%%%%%%%%%%%%%%%%%%%%%%%%%%%%%%%%%%
\bibliography{Ref}
\bibliographystyle{plain}
%%%%%%%%%%%%%%%%%%%%%%%%%%%%%%%%%%%%%%%%
%%%%%%%%%%%%%%%%%%%%%%%%%%%%%%%%%%%%%%%%

\medskip

\begin{flushleft}
M. Higaki\\
Department of Mathematics, 
Graduate School of Science, 
Kobe University, 
1-1 Rokkodai, 
Nada-ku, 
Kobe 657-8501, 
Japan.
Email: higaki@math.kobe-u.ac.jp
\end{flushleft}

\medskip

\noindent \today

\end{document}